\def\cl@chapter{\@elt {theorem}}
\newtheorem{theoremmm}{Theorem}
\newtheorem{definition}{Definition}
\newtheorem{corollary}{Corollary}
\newtheorem{example}{Example}
\newtheorem{remark}{Remark}
\newtheorem{proposition}{Proposition}
\newtheorem{lemma}{Lemma}
\newtheorem{assumption}{Assumption}
\begin{document}

\title{On the connection between supervised learning and linear inverse  problems %\thanks{This work was partially funded by the INdAM Research group GNCS.
%The authors of this work have been funded by the European Union's Horizon2020 research and innovation programme under grant agreement No.640216 for the "Flare Likelihood And Region Eruption foreCASTing" (Flarecast) project.
%Grants or other notes
%about the article that should go on the front page should be
%placed here. General acknowledgments should be placed at the end of the article.
%}
}
% \subtitle{Do you have a subtitle?\\ If so, write it here}

%\titlerunning{Learning and linear inverse problems}        % if too long for running head

\author{Sabrina Guastavino \and Federico Benvenuto}

\address{Dipartimento di Matematica, Universit\`a degli Studi di Genova}

\email{guastavino@dima.unige.it \and benvenuto@dima.unige.it}

\thanks{This work was partially funded by the INdAM Research group GNCS.
The authors of this work have been funded by the European Union's Horizon2020 research and innovation programme under grant agreement No.640216 for the "Flare Likelihood And Region Eruption foreCASTing" (Flarecast) project.
}
%\affiliation{Department of Mathematics, Universita, degli Studi di Genova}
%\authorrunning{S. Guastavino, F. Benvenuto} % if too long for running head

%\institute{Sabrina Guastavino \at
%Department of Mathematics, Universita' degli Studi di Genova \\
%              Via Dodecaneso 35, 16146, Genova \\
%              Tel.: +39-010-3536644\\
              %Fax: +123-45-678910\\
             %\email{guastavino@dima.unige.it}           %  \\
%             \emph{Present address:} of F. Author  %  if needed
%\and
%           Federico Benvenuto %\at
%             Department of Mathematics, Universita' degli Studi di Genova \\
%              Via Dodecaneso 35, 16146, Genova \\
%              Tel.: +39-010-3536644\\
               %\email{benvenuto@dima.unige.it}
%}

%\date{Received: date / Accepted: date}
% The correct dates will be entered by the editor

\maketitle

\begin{abstract}
In this paper we investigate the connection between supervised learning and linear inverse problems. 
We first show that a linear inverse problem can be view
as a function approximation problem in a reproducing kernel Hilbert space (RKHS) and
then we prove that to each of these approximation problems corresponds a class of inverse problems. 
Analogously, we show that Tikhonov solutions of this class correspond to the Tikhonov solution of the approximation problem.
% in a RKHS.
%Moreover, we
%show that Tikhonov type solutions of the latter problem correspond to Tikhonov type solutions of the inverse problem.
% first introduce a variational  problem for approximating functions by means of elements of a reproducing kernel Hilbert space.
% We prove that, under mild conditions, Tikhonov type solutions of this problem correspond to Tikhonov type solutions of a certain class of inverse problems and viceversa.
Thanks to this correspondence, we show that supervised learning and linear discrete inverse problems 
can be thought of as two instances of the approximation problem in a RKHS. 
These instances are formalized by means of a sampling operator which 
%T hen, we define a sampling operator for deriving from the infinite dimensional formulation both learning and inverse problems.
% The peculiarity of this sampling operator is that it 
takes into account both deterministic and random samples and leads to discretized problems.
% and it is an extension of the sampling operator defined in \cite{smale_shannon_2004}.
% By means of this sampling operator, supervised learning and inverse problems can be thought of as two instances of the same infinite dimensional approximation problem.
We then analyze the discretized problems and we study the convergence of their solutions 
to the ones of the approximation problem in a RKHS, both in the deterministic and statistical framework.
Finally, we prove there exists a relation between the convergence rates computed with respect to the noise level and the ones computed with respect to the number of samples. This allows us to compare 
upper and lower bounds given in the statistical learning and in the deterministic infinite dimensional inverse problems theory.

% arising from the analysis of inverse problems 
% %where the error is introduced as a bounded infinite dimensional perturbation of the data, 
% %say $\delta$, 
% and the convergence rates arising from learning problems %where estimates of the solution are computed with a finite number of samples, 
% %say $n$.
% are related to each other by proving an inequality converting the upper (resp. lower) bounds on the estimation error from (resp. to) the statistical semi-discrete learning framework to (resp. from) the deterministic infinite dimensional inverse problems one.

% considered in the frameworks of learning and inverse problems are related to each other by providing an inequality for converting the upper (resp. lower) bounds on the estimation error from (resp. to) the statistical semi-discrete learning framework to (resp. from) the deterministic infinite dimensional inverse problems one.
\end{abstract}

% REQUIRED
%\begin{keywords}{
%reproducing kernel Hilbert spaces,
%supervised learning, 
%linear inverse problems, sampling operator, Tikhonov-type regularization, 
%convergence rates}
%\end{keywords}

% REQUIRED
%\subclass{
%65F22 \and 65J22 \and 62G08 \and 62G20 
%}

%%
%% Start line numbering here if you want
%%
% \linenumbers

\section{Introduction}\label{section Introduction}

We consider the inverse problem of recovering a function $f$ such that 
\begin{equation}\label{Af=y}
y = Af
\end{equation}
where $f$ and $y$ are elements of Hilbert spaces and $A$ is a linear operator.
For estimating $f$ one can consider to have noisy infinite dimensional data, e.g. $y^\delta$ such that $\| y^\delta - y \| \leq \delta$,
or, more realistically, finite dimensional noisy samples $\{ y_1,\ldots y_n \}$ taken at points $\{ x_1,\ldots x_n \}$.
Solutions of inverse problems are usually achieved by using regularization methods, i.e. methods with specific convergence properties of the $L^2$-% (or $\ell^2$-) 
norm of the error when $\delta$ goes to $0$. 
%in terms of 
%or in expectation if the data error is supposed to be randomly generated from a known distribution.

On the other hand, we consider the supervised learning problem of finding a function $g$ from a set of examples $\{ (X_i,Y_i) \}_{i=1,\ldots,n}$ randomly drawn from an unknown probability distribution $\rho$. The function $g$ has to explain the relationship between input-output, i.e. 
$$
Y_i \sim g(X_i)
$$
for all $i=1,\dots,n$ and $g(x)$ has to be a good estimate of the output when a new input $x$ is given.
In statistical learning the emphasis is on the consistency of estimators of $g$ and the convergence is required
in expectation (or in probability) when the number of examples $n$ goes to infinity. 
% On the other hand, we consider the supervised learning problem of finding a function $g$ such that
% $$
% Y_i = g(X_i)
% $$
% for some values $\{ (X_i,Y_i) \}_{i=1,\ldots,n}$ which are supposed to be randomly drawn from an unknown distribution $\rho$ . 
% In statistical learning the emphasis is on consistency of the estimator and the convergence analysis has been carried out
% in expectation or in probability. 

% by proposing an abstract point of view under which they are two instances of the same infinite dimensional function approximation problem.
One of the main evidences of the connection occurring across these two problems is that regularization methods, usually developed for inverse problems, such as the spectral regularization, can be used for solving learning problems \cite{gerfo_spectral_2008,yao2007early}.
More in general, it is possible to apply regularization, e.g. $\ell_1$-penalized methods, for solving both learning and inverse problems \cite{guastavino2018consistent}. 
Furthermore, a recent trend is to use neural networks, a common tool for learning 
problems, for solving 
inverse problems \cite{mccann_convolutional_2017,adler2017solving}.
The fact that these problems can be solved using the same methods 
begs the question of to what extent they are similar and which are the key
points of the connection. 
In this paper we discuss the connection  at three levels: at the infinite dimensional level where both problems can be described as function approximation problems; at the discrete level where the two problems differentiate according to the nature of the data; at the level of convergence rates which have been considered under the same source conditions but different noise hypotheses.

\subsection{Our contribution}

% We point out that 
One of the main differences between supervised learning and discrete inverse problems comes from the hypothesis on available data: 
in the first case, data are usually assumed to be given as the result of a {\it stochastic process} whose underlying distribution is {\it unknown}. 
On the contrary, in the second case, data are assumed to be given according to a {\it deterministic scheme}, at least for the independent variables and even when the dependent variables are assumed to be drawn in a stochastic manner, the underlying distribution is supposed to be {\it known}.
%The main difference between learning and inverse problems is that the learning problem is built on the hypothesis that the data error is defined in stochastic terms and the underlying distribution of the samples is unknown, whereas inverse problems are considered both in a stochastic and deterministic setting  and in the stochastic case, the noise distribution is considered to be known 
% [engl,bertero,…].
As the difference concerns hypotheses on the discrete data and our aim is to identify the key points of the connection, the starting point of this work is to consider the two problems at the infinite dimensional level.
% to  without restricting ourselves to with the aim of create a more comprehensive connection.
Indeed, the fact that the range of a bounded operator is provided with a Reproducing Kernel Hilbert Space (RKHS) structure in a natural way (see \cite{kress_linear_2014}) allows us to describe the two problems as the same function approximation problem.
% [rif-mucke].
%senza limitarci a un impostazione particolare ma viceversa  definendoli in astratto come problemi di approssimazione di funzioni  al fine di ottenere una connessione più’ completa, indipendente dal tipo di assunzione sulla natura del dato di cui si suppone di disporre.
%Then, we define a general sampling operator which allows us to 
%associate to a function in an infinite dimensional Hilbert space a set of points in the data space which allows us to retrieve both 
%By means of this sampling operator the abstract approximation problem can lead 
Here, we define the function approximation in a RKHS as an optimization problem. In particular, by introducing a suitable (non-linear) generalization of the 
Moore-Penrose inverse, we prove that the solution 
of an approximation problem in a RKHS 
can always be associated with a solution of a certain inverse problem.
Conversely, we prove that the set of solutions of a class of inverse problems 
corresponds to the solution of a certain approximation problem in a RKHS.
This set is defined up to the action of the unitary group.
Moreover, as in both frameworks Tikhonov regularization is used for obtaining 
stable solutions, we prove that there exists a correspondence between Tikhonov solutions of the approximation problem in a RKHS and of their corresponding inverse problems.

Then, we define a sampling operator for deriving both learning and inverse problems from the infinite dimensional formulation.
The peculiarity of this sampling operator is that it can take into account both deterministic and stochastic samples and it is an extension of the sampling operator defined in \cite{smale_shannon_2004}.
By means of this sampling operator, supervised learning and inverse problems can be thought of as two instances of the same infinite dimensional approximation problem.
%In the light of this correspondence, 
We then analyze such discretized problems in both deterministic and stochastic frameworks and we study the convergence of their solutions to the ones of the original approximation problem.
%We point out that one of the main difference between supervised kernel learning and discrete inverse problems comes from the nature of the discretization scheme: in the first case, discretization is usually assumed as the result of a {\it stochastic process} whose underlying distribution is {\it unknown}. 
%On the contrary, in the second case, discretization is assumed to be given according to a {\it deterministic scheme}, at least for the independent variables and even when the dependent variables are assumed to be drawn in a stochastic manner, the underlying distribution is considered to be {\it known}.
% For this reason, 
The study of the convergence 
%of the finite dimensional problems (and their solutions) to infinite dimensional ones 
has to be conducted under different hypotheses according to the statistical or deterministic context.
In the statistical framework, the convergence is well-studied and 
%it is related to the concept of {\it learnability} \cite{villa_learnability_2013}.
%In this paper, we highlight that the learnability concept 
is related to the 
%hypotheses of the 
argmax continuous theorem \cite{van1996weak}, which  guarantees the convergence of the solutions of the discretized problems to the ideal ones.
In the deterministic case, we show the convergence of solutions of the discrete inverse problems to ideal ones in a general setting by means of the fundamental Gamma convergence theorem 
%whose hypotheses can thus be considered as a deterministic counterpart of learnability 
\cite{braides2006handbook}.
%Moreover, by using the {\it representer theorem} and the Gamma convergence, we prove that the sampling operator yields a projection operator that, when applied to the non-linear generalization of the Moore-Penrose solution, provides a family of convergent projected solutions.
%This result extends the {\it regularization by projections} method \cite{engl_regularization_1996} to the above introduced non-linear generalized solution in the case the projection is defined by the sampling operator. 
%Proving the convergence in the deterministic setting, we generalize 
%the well-known convergence results for the generalized inverse in the case the projection and 
%, a common result used in statistical learning
%is valid beyond the statistical framework as it is not based on stochastic assumptions. Hence,
% or in the more general methodology of regularize in Hilbert spaces as in learning from examples problems  [ ] and more recently in deep neural networks [ ], 
%a statistical discretization framework, 
%thanks to the connection between learning and inverse problems, we show that it provides finite dimensional representation of the solution of the discretized problems yielding a generalization of the well-known {\it regularization by projections} 
%for inverse problems to Tikhonov-type regularization solutions \cite{engl_regularization_1996}.

Finally, we analyze the relationship between error convergence rates arising from the analysis of inverse problems where the error is introduced as a bounded infinite dimensional perturbation of the data, say $\delta$, and (inverse) learning problems where estimates of the solution are computed with a finite number of samples, say $n$.
%Although the limiting quantity can be different, e.g. $\delta \to 0$ and $n \to \infty$, in many paper working with convergence rates, the source conditions are the same \cite{}. To do this, 
We introduce an estimator starting from a finite number of random samples with two properties: first, under the usual source conditions, it shares the same optimal rates of the spectral regularization for learning problems, and second its rates are related to the ones of the classical spectral regularization for deterministic inverse problems. In particular,
for this estimator we prove an inequality between the infinite dimensional error given a certain degree of noise $\delta$ and the expected error given $n$ samples.
Such an inequality allows us to convert upper bounds with respect to the number of samples $n$ 
(typically analyzed in the statistical inverse framework \cite{vogel2002computational,blanchard2017optimal}) 
to upper bounds with respect to the noise level $\delta$ (typically analyzed in the deterministic inverse framework \cite{engl_regularization_1996}) and conversely lower bounds depending on $\delta$ to lower bounds depending on $n$.
% Such an inequality allows us to convert upper bounds from statistical learning 
% to inverse problems and lower bounds from inverse problems to statistical learning.
A further contribution of this work is then to show that the optimal rate obtained in statistical learning is worse than the optimal one obtained in the deterministic error analysis, quantifying the difference between optimal rates in the two frameworks.
% This reflects the fact that the hypotheses in the learning settings are weaker than the deterministic one.

\subsection{Related works}

In the literature several authors proposed to solve learning problems by using regularization techniques originally developed for inverse problems, offering a glimpse of the connection between supervised learning and inverse problems
\cite{girosi_regularization_1995,smola_connection_1998,smale_shannon_2004,kurkova_neural_2005,cucker_learning_2007,mendelson_regularization_2010,wang_optimal_2011}.
In recent years, a rigorous formalization of this connection between supervised  learning and linear inverse problems have been proposed according to two strategies: the first considers the learning problem as an instance of an inverse one (see e.g. \cite{vito_learning_2005,gerfo_spectral_2008}) whereas the second introduces a bounded operator in the model equation of the statistical learning and it is known as inverse learning (see e.g. \cite{loustau2013inverse,blanchard2017optimal,rastogi_optimal_2017}).
The first strategy interprets a learning problem as an inverse one in which the forward operator is an inclusion and its main objective is to draw a connection between consistency in kernel learning and regularization in inverse problems, without dealing with convergence rates.
On the other hand, the second strategy considers inverse problems from a statistical estimation perspective highlighting the fact that statistical inverse problems can be thought of as learning problems starting from indirect data. In this case, under appropriate probabilistic source conditions, upper and lower bounds of convergence rates are provided for predictive and estimation error of spectral methods. 
The common thread between these two studies is to prioritize the learning context.
Indeed, in both these approaches data are samples randomly drawn from an unknown distribution, the typical assumption of the learning framework. 
However, as far as inverse problems are concerned, the theory is provided both in a statistical and infinite dimensional deterministic setting.
In \cite{bissantz2007convergence} a comprehensive study on the convergence rates with infinite dimensional deterministic and stochastic noise is provided.
Our analysis of the convergence rates of the proposed estimator is based on the results in this paper.

The paper is organized as follows.
In Section 2 we introduce the approximation problem in an infinite dimensional RKHS.
Then, we show that this infinite dimensional problem serves as an abstract prototype of linear inverse problems and supervised learning problems and we prove that such problems are equivalent up to the action of the unitary group. Finally, we prove that the Tikhonov regularized solutions in these two frameworks correspond to each other.
In Section 3 we focus on discretized problems by considering both the statistical and the deterministic framework and we analyze the convergence of the solution of discretized problems to their corresponding ideal ones, showing that the argmax continuous theorem and the fundamental theorem of Gamma convergence express the conditions for the convergence of the empirical solutions.
% Then we show that the representer theorem applied in the deterministic discretization framework leads to a generalization of the so-called regularization by projection.
In Section 4 we give a result for converting error convergence rates with respect to the number of samples (in the statistical framework) to error convergence rates with respect to the noise level (in infinite dimensional inverse problems framework) and vice-versa.
% In Section 4 we give a result for converting error convergence rates from the statistical learning field to the infinite dimensional inverse problems one and vice-versa.
In Section 5 we present the conclusions of our analysis.
\section{Infinite dimensional setting}\label{section Infinite dimensional setting}
%\subsection{RKHS framework}
Reproducing kernel Hilbert spaces arise in a number of areas, including statistical machine learning theory, approximation theory, generalized spline theory and inverse problems  \cite{cucker_learning_2007}. %We know that such areas are closely related [ %]
% They play a crucial role in statistical and approximation theory 
%which are areas closely related 
The usual definition of a Reproducing Kernel Hilbert Space (RKHS) is given for a Hilbert space of functions, as follows:
\begin{definition}
Let $\mathcal{H}$ be an Hilbert space of real valued functions on a non-empty set $\mathcal{X}$. $\mathcal{H}$ is said a reproducing kernel Hilbert space if for all $x\in\mathcal{X}$ the evaluation functional $L_x:f\in\mathcal{H}\to L_x(f):=f(x)$ is continuous.
%bounded operator. SABRY: metterei o che e' continuo oppure che e' bounded, intanto si equivalgono perche' S_x e' lineare.
\end{definition}
%of reproducing kernel Hilbert space, without forgetting the previous observation about the set $\mathcal{X}$, which expands the vision of reproducing kernel Hilbert space not only to the usual case of an Hilbert function space.
An important characterization of RKHSs, which can be even considered as an alternative definition, is the following:
%''SABRY: An important characterization of RKHSs e' il fatto di avere un kernel riproducente. Diamo quindi la def di kernel riproducente. ''
%\begin{definition}
$K:\mathcal{X}\times\mathcal{X}\to\mathbb{R}$ is a reproducing kernel of an Hilbert space $\mathcal{H}$ if for all $f\in\mathcal{H},$ $f(x)=<f,K_x>_{\mathcal{H}}$, where $K_x:=K(x,\cdot) \in \mathcal H$, $\forall$ $x\in\mathcal{X}$.
%\end{definition}
% ''SABRY: \begin{definition}
% $K:\mathcal{X}\times\mathcal{X}\to\mathbb{R}$ is a reproducing kernel of an Hilbert space $\mathcal{H}$ if $\forall$ $x\in\mathcal{X}$ $K_x:=K(x,\cdot) \in \mathcal H$ and $\forall$ $f\in\mathcal{H},$ $f(x)=<f,K_x>_{\mathcal{H}}$. 
% \end{definition}''
%Equivalently we can define a reproducing kernel Hilbert space as an Hilbert space with a reproducing kernel $K$, whose span is dense in $\mathcal{H}$.
The definition of RKHS is not restricted to function spaces but allows us to consider reproducing kernels $K$ defined on $\mathcal{X}\times\mathcal{X}$, where $\mathcal{X}$ is a Borel set.
For function spaces $\mathcal{X}$ shall be $\mathbb{R}$ or $\mathbb{C}$, but in general it can be a countable set or a finite set \cite{aronszajn1950theory} (e.g. a pixel space) .
This perspective takes to see the reproducing kernel $K$ as function of two variables $(x,x')$, which can be continuous variables, e.g. $x,x'\in\mathbb{R}$, or can be represented by indexes $(i,j)$, e.g. countable variables $i,j\in\mathbb{N}$ or finite discrete variables $i,j\in\{1,\dots,n\}$.
In the latter case, the kernel $K$ is an infinite or finite matrix.
We now define an approximation problem for functions, sequences or vectors, by requiring that the solution belongs to a suitable RKHS.
%''SABRY: The definition of RKHS is not restricted to function spaces but allows to consider reproducing kernels $K$ defined on $\mathcal{X}\times\mathcal{X}$, where $\mathcal{X}$ is a arbitrarily non-empty set (Borel), which shall be $\mathbb{R}$ or $\mathbb{C}$, but in general it can be a countable set or a finite set.
% This perspective takes to see the reproducing kernel $K$ as function of two variables $(x,x')$, which can be continuous variables, e.g. $x,x'\in\mathbb{R}$, or can be represented by indexes $(i,j)$, e.g. countable variables $i,j\in\mathbb{N}$ or finite discrete variables $i,j\in\{1,\dots,n\}$, with $n\in\mathbb{N}_+$.
% In the latter case, the kernel $K$ is an infinite or finite matrix.
% We now define an approximation problem for functions, sequences or vectors, by requiring that the solution belongs to a suitable RKHS.''
% \textbf{Cappello sui RKHS} definiti come spazi di Hilbert di funzioni e sul problema associato di approssimazione di funzioni in un RKHS notando che in generale il reproducing kernel puo' essere definito $K(x,x')$ con $x,x' \in\mathcal{X}$ dove $\mathcal{X}$ e' uno spazio di Borel che puo' essere $\mathcal{R}$ o insieme numerabile o un insieme finito portando quindi a vedere $K$ come funzione in senso proprio di $x,x'$, i quali possono rappresentare anche degli indici $(i,j) $quindi $K$ puo' essere una matrice (finita/infinita se gli indici sono numerabili).

\subsection{Approximation problems in RKHS}\label{Approximation problems in RKHS}

We introduce the approximation problem in a RKHS as the problem of finding the closest element of the RKHS to a given one. 
% \begin{equation}
% \label{R def}
% R(y',y) \ge 0 \quad \forall \quad y',y\in\mathcal{H}_2 \quad \text{and} \quad R(y',y)=0 \iff y'=y,
% \end{equation}
% and let us suppose that $R$ is weakly lower semicontinuous, strictly convex and coercive with respect to the second argument, . 
% %is strongly convex in the second variable. 
% The solution of the approximation problem is the minimizer of $R$ over the RKHS $\mathcal{H}_K$ , i.e.
%Consider a functional $R:\mathcal H_2 \times \mathcal H_2 \to \mathbb R$.
%Given an element $y \in \mathcal{H}_2$,
Let us call $y$ the element to approximate in a given Hilbert space $\mathcal{H}_2$ and let $\mathcal{H}_K\subseteq\mathcal{H}_2$ be a RKHS with reproducing kernel $K$.
We define the solution of the approximation problem as the minimizer of a functional $R_y: %=R(y,\cdot) : 
\mathcal{H}_2\to\mathbb{R}$ over the RKHS $\mathcal{H}_K$, i.e.
\begin{equation}
\label{approx_problem}
g_{R_y} := \arg\min_{g\in \mathcal{H}_K} R_y(g).
\end{equation}
The idea is that $R_y$ measures the approximation error. 
We require that $R_y(g) \ge 0$ for all $g\in \mathcal{H}_2$, and
$R_y(g) = 0$ iff $g=y$.
%and that $R_y$ is strictly convex. 
% This guarantees the existence and uniqueness of $g_{R_y}$ if $y\in\mathcal{H}_K$.
% Otherwise, if $y\not\in\mathcal{H}_K$, the existence and uniqueness are ensured by adding hypotheses of lower semi continuity and coercivity with respect to the norm $\|\cdot\|_{ \mathcal{H}_K}$, or coercivity with respect to the norm $\| \cdot\|_{ \mathcal{H}_2}$ and the closure of $\mathcal{H}_K$ in $\mathcal{H}_2$.
Under these hypotheses, if $y\in\mathcal{H}_K$ the existence and uniqueness are assured by requiring that $R_y$ is strictly convex. Otherwise, if $y\notin\mathcal{H}_K$ the existence and uniqueness are assured either by requiring that
\begin{itemize}
\item[a)] $R_y$ is lower semicontinuous, strictly convex and coercive with respect to the norm $\| \cdot\|_{ \mathcal{H}_2}$ and $\mathcal{H}_K\subseteq\mathcal{H}_2$ is closed, or
\item[b)] $R_y$ is lower semicontinuous, strictly convex and coercive with respect to the norm $\| \cdot\|_{ \mathcal{H}_K}$.
\end{itemize}
A typical example is $R_{y}(g)=\Vert y-g\Vert^2_{\mathcal{H}_2}$ with $\mathcal{H}_K$ closed in $\mathcal{H}_2$.

\subsection{Linear inverse problems in Hilbert spaces}\label{sub:Linear inverse problems in Hilbert spaces}

Let $\mathcal{H}_1$ 
%and $\mathcal{H}_2$ 
be an Hilbert space (generally different from $\mathcal{H}_2$) and $A$ a bounded linear operator $A: \mathcal{H}_1\to\mathcal{H}_2$. The inverse problem associated to the operator $A$ consists in finding $f$ satisfying equation \cref{Af=y}
% \begin{equation}
% \label{Af=y}
% Af = y
% \end{equation} 
given $y\in\mathcal{H}_2$.
%where $y$ represents the data.
The ill-posedness of inverse problems leads to the definition of the generalized solution, usually denoted by $f^{\dagger}$, which, from a variational point of view, can be seen as the minimal norm solution of the least squares problem $\min_{f\in\mathcal{H}_1} \|y-Af\|^2_{\mathcal{H}_2}$.
The variational form of the generalized inverse suggests that a  strategy for approximating the solution of an inverse problem is to minimize a functional along $f$. Then, we consider the set of $R_y$-minimum solutions of the problem \cref{Af=y} defined by
\begin{equation}
\label{f_R}
\mathcal{S}_{A,R_y} := \arg \min_{f\in\mathcal{H}_1} R_y(Af)
\end{equation}
and take the minimum norm solution.
%By using the same symbol here $R_y$ as in \cref{approx_problem} we are abusing notation: in principle the two functionals can be different, but, as we prove in the next theorem, they need to be equal in order to realize the equivalence which this paper relies on.
%Then, we denote them both with $R_y$ for the sake of improving the readability of the paper.
%To this end, we now define the $R$-generalized solution of a linear equation (\cref{Af=y}) 
%, that is characterized by the pair $(A,y)$,
%with respect to the functional $R$ described in paragraph \cref{Approximation problems in RKHS}.
%\begin{definition}
%$f_R \in \mathcal H_1$ is called $R$-minimum solution of the inverse problem (\cref{Af=y}) if 
%\begin{equation}
%R_y( Af_R) \le R_y(Af) ~~~ \forall ~ f\in\mathcal{H}_1.
%\end{equation}
%\end{definition}
% We denote with $\mathcal{S}_{R,(A,y)}$ the . 
When at least an $R_y$-minimum solution $f_{R_y}$ exists, $\mathcal{S}_{A,R_y}$ is the affine subspace given by $f_{R_y} + Ker(A)$, where $Ker(A)$ denotes the nullspace of $A$.
% COMPLETARE: $f_R$ esiste?
% Now we can give the definition of the $R$-generalized inverse as outlined above.
\begin{definition}
$f^{\dagger}_{R_y} \in \mathcal H_1$ is called the $R_y$-generalized solution of the inverse problem \cref{Af=y} if it is the $R_y$-minimum solution of \cref{Af=y} with minimum norm, i.e.
\begin{equation}
\label{fdaggerR}
f_{R_y}^{\dagger} = \arg\min_{f\in\mathcal{S}_{A,R_y}} \| f\|_{\mathcal{H}_1}.
\end{equation}
\end{definition}
As in \cref{Approximation problems in RKHS} we require that $R_y(g) \ge 0$ for all $g\in \mathcal{H}_2$, and
$R_y(g) = 0$ iff $g=y$.
%and that $R_y$ is strictly convex. 
% This guarantees the existence and uniqueness of $g_{R_y}$ if $y\in\mathcal{H}_K$.
% Otherwise, if $y\not\in\mathcal{H}_K$, the existence and uniqueness are ensured by adding hypotheses of lower semi continuity and coercivity with respect to the norm $\|\cdot\|_{ \mathcal{H}_K}$, or coercivity with respect to the norm $\| \cdot\|_{ \mathcal{H}_2}$ and the closure of $\mathcal{H}_K$ in $\mathcal{H}_2$.
We discuss some hypotheses which assure the existence and uniqueness of the $R_y$-generalized solution. We denote with $\Im(A)$ the range of $A$. Under these hypotheses, if $y\in\Im(A)$ the existence and uniqueness are assured by requiring that $R_y$ is strictly convex. Otherwise, if $y\notin\Im(A)$ the existence and uniqueness are assured either by requiring that
\begin{itemize}
\item[a)] $R_y$ is lower semicontinuous, strictly convex and coercive with respect to the norm $\| \cdot\|_{ \mathcal{H}_2}$ and $\Im(A)\subseteq\mathcal{H}_2$ is closed, or
\item[b)] $f\in\mathcal{H}_1 \mapsto R_y(Af)$ is lower semicontinuous, strictly convex and coercive with respect to the norm $\| \cdot\|_{ \mathcal{H}_1}$.
\end{itemize}
% In this case, given $y\in\mathcal{H}_2$, the existence and the uniqueness of the minimum norm solution are guaranteed if $f\in\mathcal{H}_1 \mapsto R_y(Af)$ is lower semicontinuous, strictly convex and coercive and, as in \cref{Approximation problems in RKHS}, $R_y(Af) \ge r_0$ for all $f\in \mathcal{H}_1$, and
% $R_y(Af) = r_0$ if and only if $y\in\Im(A)$, where $\Im(A)$ denotes the range of $A$, and \cref{Af=y} holds. (CAMBIARE)

%We denote the minimum norm solution with $f_R^{\dagger}$ and we call it the $R$-minimum solution of the inverse problem \cref{Af=y}.
%Abstractly, the inverse problem can be described by the pair $(A, y)$ and the functional $R$ defines the  variational method.
When $R_y$ is different from the least squares functional, this procedure provides a generalization of the so-called Moore Penrose generalized solution.
Such a generalization is needed to develop the equivalence between approximation problems in RKHSs and classes of linear inverse problems. We introduce it in the next paragraph.

%A more general inverse solution can be considered as follows.
%More in general, when the function $R$ is considered to measure the discrepancy between elements of $\mathcal H_2$, a unique solution of the problem can be found
\subsection{Equivalence between problems}\label{section Equivalence between problems}

We show the equivalence between an approximation problem in a RKHS and an inverse problem by proving that there is a natural correspondence of the solutions of the two problems.
% We can now prove the correspondence between the solution of an approximation problem in a RKHS described by eq. (\cref{approx_problem}) and the $R$-generalized solution of an inverse problem, defined in eq. (\cref{fdaggerR}).
We make use of the following:
%The correspondence relies on the following
\begin{assumption}
\label{assumption1}
Let $\mathcal{H}_1$ be a real separable Hilbert space and $\mathcal{H}_2$ be a real Hilbert space on a Borel space $\mathcal{X}$.
For all $x\in\mathcal{X}$ and for all $f\in\mathcal{H}_1$ there exists a constant $c>0$ such that 
\begin{equation}\label{assumption on A}
|Af(x)|\le c\| f\|_{\mathcal{H}_1}.
\end{equation}
% The evaluation functionals at a point $x\in\mathcal{X}$, defined as
% \begin{eqnarray}
% S_x: & & \mathcal{H}_1\to \mathbb{R} \\
% & & h \to S_x(h):= (Ah)(x)
% \end{eqnarray}
% are (uniformly) bounded, for any  linear operator $A$ from $\mathcal{H}_1$ to $\mathcal{H}_2$.
\end{assumption}

The \cref{assumption1} together with the Riesz's representation theorem implies that for all $x$ there exists an element $\phi_x\in\mathcal{H}_1$ such that
\begin{equation}
\label{scalar-product}
(Af)(x) = <f, \phi_x>_{\mathcal{H}_1}.
\end{equation}
Moreover, it is well known that the range of the operator $A$
%''SABRY: , which we denote with $\Im(A)$,
is a RKHS (e.g. see \cite{steinwart_support_2008}).
%Thanks to the result ..., that we report in 
The following proposition is an adaptation of this result to our context.

\begin{proposition}\label{RKHS}
$\Im(A)$ equipped with the norm
\begin{equation*}
\| g\|_{\mathcal{H}_K} = \min\{ \| w\|_{\mathcal{H}_1} : w\in\mathcal{H}_1 \text{ s.t } g(x)=<w,\phi_x>_{\mathcal{H}_1},  ~ x\in\mathcal{X}\}
\end{equation*}
is a RKHS 
%that we denote by $\mathcal{H}_K$. We have that
%\begin{eqnarray}
%\mathcal{H}_K := \{g:\mathcal{X}\to \mathbb{R} ~ : ~  \exists f \in\mathcal{H}_1, ~ g(x) = <f,\phi_x>_{\mathcal{H}_1}
%= (Af)(x)
%, ~ \forall ~  x\in\mathcal{X}\}\subset \mathcal{H}_2 \nonumber
%\end{eqnarray}
%is a reproducing kernel Hilbert space 
with kernel
\begin{equation}
\begin{split}
\label{Kernel}
K: &  \mathcal{X}\times \mathcal{X}\to \mathbb{R} \\ 
&  (x,r) \to K(x,r):= <\phi_x,\phi_r>_{\mathcal{H}_1}.
\end{split}
\end{equation}
\end{proposition}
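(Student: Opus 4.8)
The plan is to realize $\Im(A)$ as an isometric copy of the closed subspace $\mathrm{Ker}(A)^{\perp}$ of $\mathcal{H}_1$, transport the Hilbert structure across this isometry, and then verify the reproducing property directly using the representers $\phi_x$. First I would note that, by \cref{scalar-product}, the constraint $g(x)=\langle w,\phi_x\rangle_{\mathcal{H}_1}$ for all $x\in\mathcal{X}$ is exactly the equation $Aw=g$ between functions on $\mathcal{X}$; hence the set over which the minimum defining $\|g\|_{\mathcal{H}_K}$ is taken is the preimage $A^{-1}(\{g\})=w_0+\mathrm{Ker}(A)$ for any fixed $w_0$ with $Aw_0=g$. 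Since $A$ is bounded, $\mathrm{Ker}(A)$ is closed, so this is a closed affine subspace and admits a unique element of minimal norm, namely the orthogonal projection of $0$ onto it, equivalently the unique point $w_g\in(w_0+\mathrm{Ker}(A))\cap\mathrm{Ker}(A)^{\perp}$. This shows the minimum is attained, that $\|g\|_{\mathcal{H}_K}=\|w_g\|_{\mathcal{H}_1}$, and that $g\mapsto w_g$ is the inverse of the bijection $A|_{\mathrm{Ker}(A)^{\perp}}:\mathrm{Ker}(A)^{\perp}\to\Im(A)$, which is injective because $\mathrm{Ker}(A)\cap\mathrm{Ker}(A)^{\perp}=\{0\}$ and surjective by definition of the range.

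Next I would define $\langle g_1,g_2\rangle_{\mathcal{H}_K}:=\langle w_{g_1},w_{g_2}\rangle_{\mathcal{H}_1}$. This is a genuine inner product inducing the norm above, and by construction $A|_{\mathrm{Ker}(A)^{\perp}}$ is an isometric isomorphism from $\mathrm{Ker}(A)^{\perp}$ onto $(\Im(A),\langle\cdot,\cdot\rangle_{\mathcal{H}_K})$. Since $\mathrm{Ker}(A)^{\perp}$ is a closed subspace of the Hilbert space $\mathcal{H}_1$ it is complete, and completeness is carried through the isometry, so $(\Im(A),\|\cdot\|_{\mathcal{H}_K})$ is itself a Hilbert space.

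Finally I would verify the RKHS property with the stated kernel. Taking $w=\phi_x$ in \cref{scalar-product} gives $(A\phi_x)(r)=\langle\phi_x,\phi_r\rangle_{\mathcal{H}_1}=K(x,r)$, so $K_x:=K(x,\cdot)=A\phi_x$ lies in $\Im(A)$, and its minimal-norm preimage is $\tilde\phi_x:=P_{\mathrm{Ker}(A)^{\perp}}\phi_x$, where $P_{\mathrm{Ker}(A)^{\perp}}$ denotes orthogonal projection. For any $g\in\Im(A)$ with minimal-norm preimage $w_g\in\mathrm{Ker}(A)^{\perp}$, I would compute $\langle g,K_x\rangle_{\mathcal{H}_K}=\langle w_g,\tilde\phi_x\rangle_{\mathcal{H}_1}=\langle w_g,\phi_x\rangle_{\mathcal{H}_1}$, the last equality holding because $\phi_x-\tilde\phi_x\in\mathrm{Ker}(A)$ is orthogonal to $w_g$; by \cref{scalar-product} this equals $(Aw_g)(x)=g(x)$. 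Thus $\langle g,K_x\rangle_{\mathcal{H}_K}=g(x)$, which is simultaneously the reproducing identity and the continuity of every evaluation functional, completing the proof.

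I expect the main obstacle to be the bookkeeping around the minimal-norm preimage: one must ensure the constraint set is a closed affine subspace so that the minimizer exists and is unique, and that the inner product defined through $g\mapsto w_g$ is well defined, independent of the chosen representative. Both facts hinge on the closedness of $\mathrm{Ker}(A)$, i.e.\ on the boundedness of $A$ granted by \cref{assumption1}. Once the isometry with $\mathrm{Ker}(A)^{\perp}$ is in place, the RKHS verification reduces to the short orthogonality computation above.
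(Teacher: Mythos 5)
Your proof is correct and is essentially the standard argument (the feature-map construction of a RKHS, as in the reference the paper cites for this fact): the paper itself omits the proof, invoking \cite{steinwart_support_2008}, and your route — identifying $\Im(A)$ isometrically with $\mathrm{Ker}(A)^{\perp}$, transporting completeness, and checking the reproducing identity via $K_x=A\phi_x$ and orthogonal projection of $\phi_x$ — is exactly that argument adapted to the notation here. The only point worth making explicit is that the constraint set is governed by the joint kernel of the functionals $w\mapsto\langle w,\phi_x\rangle_{\mathcal{H}_1}$, which is closed as an intersection of kernels of continuous functionals and coincides with $\mathrm{Ker}(A)$ once elements of $\Im(A)$ are regarded as functions on $\mathcal{X}$, which is the reading the paper intends.
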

% equipped with the scalar product 
% \begin{eqnarray}
% <g,h>_{\mathcal{H}_K} := <A^{\dagger} g, A^{\dagger} h>_{\mathcal{H}_1}.
% \end{eqnarray}
%In the case that the operator $A$ is injective then $A^{\dagger}$ is substituted by $A^{-1}$.
We remark that $K$ by definition is a positive semi-definite kernel over $\mathcal{X}$ and $\phi$ represents the feature map on the feature space $\mathcal{H}_1$. Furthermore we have
$$\Im(A) = \overline{span\{ K_x, ~ x\in\mathcal{X}\}}.$$ 
Moreover, it is worth observing that conditions usually required on a reproducing kernel and on its associated RKHS are satisfied:
$\mathcal{H}_K$ is separable since $\mathcal{H}_1$ is separable and $A$ is a partial isometry from $\mathcal{H}_1$ to $\Im(A)$, and for all $x\in\mathcal{X}$ $K(x,x)\le c^2$ since \cref{assumption1} applies.

%\mathcal{H}_{\mathcal{K}}
% Now we prove the main result of this section, which describes the correspondence of the approximation and the inverse problems by identifying the solutions of the two problems $g_R$ and $f^{\dagger}_R$ as defined in equations (\cref{approx_problem}) and (\cref{fdaggerR}), respectively.
%the correspondence of solutions presented in this paper (see ..),
Now we introduce the restriction of $A$ to the space orthogonal to its kernel and we prove the main result of this section
%, which shows the equivalence between the approximation and the inverse problems by identifying in an appropriate manner 
which identifies the solutions of the two problems $g_{R_y}$ and $f^{\dagger}_{R_y}$ as defined in equations \cref{approx_problem} and \cref{fdaggerR}, respectively.
We denote with $\tilde{A}$ the restriction operator, i.e.
\begin{equation*}
\tilde{A}:=A_{|Ker(A)^{\perp}}: Ker(A)^{\perp}\to \Im(A).
\end{equation*}
By definition, $\tilde{A}$ admits the inverse operator $\tilde{A}^{-1}$.

% (Remark: the adjoint operator $A^{*}$ defined on $\mathcal{H}_{K}$, $A^{*}:\mathcal{H}_{K}\to\mathcal{H}_1$ is the inverse operator $A^{*}=A^{-1}$.)
\begin{theoremmm}
\label{correspondence}
Let $g_{R_y}$ be the solution of the approximation problem in the RKHS $\mathcal{H}_K$ with kernel $K$ defined in equation \cref{approx_problem}.
Let $f_{R_y}^\dagger$ be the solution of the inverse problem defined in equation \cref{fdaggerR} with the operator $A$ defined in equation \cref{scalar-product}.
If $\forall$ $x,x'\in\mathcal{X}$ $K(x,x') = <\phi_x,\phi_{x'}>_{\mathcal{H}_1}$, we have
\begin{equation}\label{cor-fdagger-g}
f^{\dagger}_{R_y} =\tilde{A}^{-1} g_{R_y} ~.
\end{equation}
\end{theoremmm}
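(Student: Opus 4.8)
The plan is to exploit the set-theoretic identity $\mathcal{H}_K=\Im(A)$ established in \cref{RKHS}, which lets me transfer the minimization over the RKHS into a minimization over the range of $A$. First I would observe that $R_y$ is a functional on $\mathcal{H}_2$ whose value $R_y(g)$ depends on $g$ only as an element of $\mathcal{H}_2$, not on the $\mathcal{H}_K$-norm; hence minimizing $R_y(g)$ over $g\in\mathcal{H}_K$ is the same problem as minimizing it over $g\in\Im(A)$. Since every $g\in\Im(A)$ is of the form $g=Af$ for some $f\in\mathcal{H}_1$, this in turn coincides with minimizing $f\mapsto R_y(Af)$ over $\mathcal{H}_1$, which is exactly the problem defining $\mathcal{S}_{A,R_y}$.

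Next I would use uniqueness of $g_{R_y}$ to pin down $\mathcal{S}_{A,R_y}$ precisely. For any $f\in\mathcal{S}_{A,R_y}$ the element $Af$ lies in $\Im(A)=\mathcal{H}_K$ and attains the common minimum value $R_y(g_{R_y})$ there; by uniqueness of the minimizer over $\mathcal{H}_K$ this forces $Af=g_{R_y}$. Consequently $\mathcal{S}_{A,R_y}=\{f\in\mathcal{H}_1 : Af=g_{R_y}\}$, which, as already remarked in the text, is the affine subspace $f_{R_y}+Ker(A)$ for any particular solution $f_{R_y}$. In particular $g_{R_y}\in\Im(A)$, so $\tilde{A}^{-1}g_{R_y}$ is well defined and lies in $Ker(A)^{\perp}$.

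The remaining step is to identify the minimal-norm element of this affine subspace. Decomposing $\mathcal{H}_1=Ker(A)\oplus Ker(A)^{\perp}$ — legitimate because $Ker(A)$ is closed, $A$ being bounded — and writing a generic solution through its two orthogonal components, the Pythagorean identity shows that $\|f\|_{\mathcal{H}_1}$ is minimized exactly when the $Ker(A)$-component vanishes. Thus $f^{\dagger}_{R_y}$ is the unique element of $\mathcal{S}_{A,R_y}$ lying in $Ker(A)^{\perp}$. But $\tilde{A}^{-1}g_{R_y}$ is by definition the unique $f\in Ker(A)^{\perp}$ with $\tilde{A}f=Af=g_{R_y}$, so the two elements coincide, which is precisely \cref{cor-fdagger-g}.

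I expect the only delicate point to be the bookkeeping in the first two paragraphs: one must keep the identification $\mathcal{H}_K=\Im(A)$ strictly as an equality of sets of functions inside $\mathcal{H}_2$, so that the $R_y$-values genuinely agree, while invoking the $\mathcal{H}_K$-norm solely to secure existence and uniqueness of $g_{R_y}$ through the stated convexity and coercivity conditions. Once this is made explicit, the argument reduces to the classical minimal-norm characterization underlying the Moore--Penrose-type inverse, and no further analytic input is required.
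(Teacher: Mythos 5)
Your proposal is correct and follows essentially the same route as the paper's proof: identify $\mathcal{H}_K$ with $\Im(A)$ so the two minimization problems coincide, observe that $\tilde{A}^{-1}g_{R_y}$ is an $R_y$-minimum solution lying in $Ker(A)^{\perp}$, and conclude it is the minimal-norm one. The only difference is that you spell out two steps the paper leaves implicit (the characterization $\mathcal{S}_{A,R_y}=\{f: Af=g_{R_y}\}$ via uniqueness of $g_{R_y}$, and the Pythagorean argument selecting the $Ker(A)^{\perp}$ component), which is a welcome clarification rather than a different method.
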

\begin{proof}
By hypothesis we have the following identification $\Im(A)=\mathcal{H}_K$ intended as RKHSs. Thanks to this identification the hypotheses on $R_y$ in  problems \cref{approx_problem,fdaggerR} are exactly the same: the hypotheses a) in \cref{Approximation problems in RKHS} and in \cref{sub:Linear inverse problems in Hilbert spaces} are trivially the same hypothesis, the hypotheses in b) in \cref{Approximation problems in RKHS} and in \cref{sub:Linear inverse problems in Hilbert spaces} are equivalent by noting that the coercivity of $R_y$ with respect to the norm $\|\cdot\|_{\mathcal{H}_K}$ corresponds to the coercivity of $f\mapsto R_{y}(Af)$ with respect to the norm $\|\cdot\|_{\mathcal{H}_1}$.
Let $g_{R_y}$ be the solution of the problem \cref{approx_problem} and let $\tilde{f} := \tilde{A}^{-1} g_{R_y}$.
Then for all $f \in \mathcal{H}_1$ we have
\begin{equation}
\label{2}
R_y(Af)  \ge \min_{g\in \Im(A)} R_y(g) = R_y(g_{R_y}) = R_y(A \tilde{f} ) ~ ,
\end{equation} 
i.e. $\tilde{f}$ is solution of problem \cref{f_R}. Furthermore, by definition of $\tilde{A}^{-1}$, $\tilde{f} \in Ker(A)^{\perp}$ and therefore $\tilde{f}$ is the solution of \cref{fdaggerR}, that is  $\tilde{f} = f^{\dagger}_{R_y}$.
%  Now We prove that, given $f^{\dagger}_R$, the image of $f^{\dagger}_R$ through $A$, i.e. $Af^{\dagger}_R$, is a solution of the problem (\cref{approx_problem}).
% We observe that, given $g\in\mathcal{H}_K$, using the reproducing property and the definition of the operator $\tilde{A}^{-1}$, for any $x\in\mathcal{X}$ we have that
% $$g(x)=<g,K_x>_{\mathcal{H}_K}=<g,A\phi_x>_{\mathcal{H}_K}=<\tilde{A}^{-1} g, \phi_x>_{\mathcal{H}_1}=(A\tilde{A}^{-1}g)(x) ~.
% $$
% So for all $g\in\mathcal{H}_K$ we have
% \begin{eqnarray*}
% R_y(g)&=&R_y(A\tilde{A}^{-1}g) \ge 
% %\inf\{R_y(Af)~,~ f\in Im(A^*)\} \ge \\
% \min_{f\in \Im(\tilde{A}^{-1})} R_y(Af) \ge \\
% &\ge &\min_{f\in \mathcal{H}_1} R_y(Af)= R_y(Af^{\dagger}_R)
% \end{eqnarray*}
% and so we have the first part of the thesis.
%we observe that $g_R=Q_R(y)$
%we define with $f^{\dagger}_R:= A^{\dagger} g_R$.
% Fixed $\mathcal{H}_1$, $\exists$ $\phi: x\to\phi_x$ s.t. $K(x,x')=<\phi_x,\phi_{x'}>$. We define with $A$ the operator s.t. $\forall$ $f\in\mathcal{H}_1$, $x\in\mathcal{X}$ $(Af)(x)=<f,\phi_x>_{\mathcal{H}_1},$ in this way $\mathcal{H}_K$ coincides with the image of $A$ and 
\end{proof}
\begin{remark}
Under \cref{assumption1}, given an inverse problem described by a linear operator $A$ (characterized by a map $\phi$), it is always possible to associate with it an approximation problem in the RKHS $\mathcal{H}_K$ with kernel $K$ defined by the map $\phi$, i.e. $
K(x,x')=<\phi_x,\phi_{x'}>_{\mathcal{H}_1}$ for all $x,x'\in\mathcal{X}$.
\end{remark}
\begin{remark}
Given an approximation problem in the RKHS $\mathcal{H}_K$ with kernel $K$, it is always possible to associate with it a feature map $\phi:x\in\mathcal{X}\to\phi_x\in\mathcal{H}_1$, where $\mathcal{H}_1$ is an Hilbert space and such that $K(x,x')=<\phi_x,\phi_{x'}>_{\mathcal{H}_1}$ for all $x,x'\in\mathcal{X}$. In such a way we define $\mathcal{F}=\overline{span\{\phi_x ~, ~ x\in\mathcal{X}\}}$, which is the feature space, and an inverse problem whose operator $A$ is given in equation \cref{scalar-product}. By construction we have the identification between the feature space and the orthogonal of the kernel of the operator, i.e. $\mathcal{F}=Ker(A)^{\perp}$.
%for any Hilbert space $\mathcal{H}_1$,
%for which there exists 
% where the Hilbert space $\mathcal{H}_1$ represents the feature space 
%(i.e. 
In the case that $K$ is a continuous reproducing kernel, the Mercer theorem gives us the way to describe the feature map $\phi$ and the feature space is  $\ell_2$, while in the general case (when $K$ is not necessarily continuous) we can consider the canonic feature map, that is $\phi:\mathcal{X}\to \mathcal{H}_K$ where $\forall$ $x\in\mathcal{X}$ $\phi_x = K_x$. 
%)
%$(Af)(x)=<f,\phi_x>_{\mathcal{H}_1}$, $\forall ~ f\in\mathcal{H}_1$, $x\in\mathcal{X}$.
%to the approximation problem can be associated the inverse problem described by the operator $A$.
\end{remark}

From the second remark the feature map associated with a given kernel $K$ is determined up to the action of unitary group on $\mathcal{H}_1$, i.e.
\begin{equation}
\label{K}
K(x,x')=<\phi_x,\phi_{x'}>_{\mathcal{H}_1}=<U\phi_x,U\phi_{x'}>_{\mathcal{H}_1},
\end{equation}
for each unitary operator $U$ acting on $\mathcal{H}_1$. 
In particular,
% Let us define the feature map by means of the left action of the unitary group on $\mathcal{H}_1$ as follows:
% \begin{equation}
% U \phi : x \in \mathcal{X}  \to  U \phi_x \in \mathcal{H}_1
% \end{equation}
% Thus, if we define the operator 
% \begin{equation}
% \label{A_U}
% (A_{U\phi}~f)(x) := <f,(U\phi)_x>_{\mathcal{H}_1}
% \end{equation}
% we have that $A_{U\phi} := A_{\phi} \circ  U^*$ since 
% $$
% (A_{U}f)(x)=<U^{*}f,\phi_x>_{\mathcal{H}_1}=((A\circ U^*)(f))(x) ~ .
% $$
we can define an equivalence relation $\sim$ on $\mathcal{H}_1$ using the left action of the unitary group 
%which we denote with 
$\mathcal{U}$. Let $f,f' \in \mathcal{H}_1$
\begin{equation}
\label{quotient}
f ~ \sim ~f' \quad \iff \quad \exists ~ U \in\mathcal{U} ~~~|~~~ f' = U f .
\end{equation}
We can also define an equivalence $\sim^{\mathcal{X}}$ between feature maps. Let $\phi,\phi' \in \mathcal{H}_1^{\mathcal{X}}$
\begin{equation}
\label{feature_map_quotient}
\phi ~ \sim^{\mathcal{X}} ~\phi' \quad \iff \quad \phi_x \sim \phi'_x ~, ~ \forall ~ x \in \mathcal{X}.
\end{equation}
Then, we define the map 
\begin{eqnarray*}
\mathcal{K} : \mathcal{H}_1^\mathcal{X}  & \to & \mathbb{R}^{\mathcal{X} \times \mathcal{X}} \\
\phi & \longmapsto & K_\phi
\end{eqnarray*}
with $K_\phi(x,x')=\langle \phi_x,\phi_{x'} \rangle_{{\mathcal H}_1}$.
Therefore, from equations \cref{K} and \cref{feature_map_quotient} we have a bijection
\begin{eqnarray*}
\mathcal{H}_1^\mathcal{X} / \sim^{\mathcal{X}}
& \longleftrightarrow
& \Im(\mathcal{K}) 
%\simeq \{ \mathcal{H}_K ~|~ K \in \Im(\mathcal{K}) \}
\subset 
\mathbb{R}^{\mathcal{X} \times \mathcal{X}} \\ \nonumber
\bar \phi 
& \longleftrightarrow
& K_{\phi}
~,
\end{eqnarray*}
%Let us consider a positive semidefinite kernel $K$.
%and denote with $g_R$ the $R$-solution of the corresponding RKHS approximation problem.
%tutte le feature map di una classe vanno nello stesso k.
%tutte le soluzioni di una classe vanno nella stassa g.
where $\bar \phi$ is the class induced by the equivalence relation $\sim^{\mathcal{X}}$ in \cref{feature_map_quotient}.
We denote with $A_{\phi}$ the operator defined in equation \cref{scalar-product}. We have 
\begin{equation*}
%\label{f-class}
g_{R_y} = A_{\phi} f^{\dag}_{R_y} = A_{\phi'} (f_{R_y}^{\dag})',
\end{equation*}
where $\phi\sim^{\mathcal{X}}\phi'$ and $f^\dag_{R_y}\sim (f^\dag_{R_y})'$.
Then we also have a bijection
\begin{eqnarray*}
\mathcal{H}_1 / \sim ~ 
& \longleftrightarrow
& \mathcal{H}_K \\
\overline{f^\dagger_{R_y}}
& \longleftrightarrow
& g_{R_y}
\end{eqnarray*}
stating that, for any ${R_y}$ satisfying conditions of problem \cref{approx_problem} (or equivalently \cref{fdaggerR}) and for any $y \in \mathcal{H}_2$, the class of $R_y$-generalized solutions $\overline{f^\dagger_{R_y}}$ corresponds to the solution $g_{R_y}$ of the approximation problem in the RKHS $\mathcal{H}_K$ \cref{approx_problem}.
Let us now fix an element $y \in \mathcal{H}_2$ and a functional ${R_y}$.
For each $K \in \Im(\mathcal{K})$ we define the function
$T_{R_y}(K) := g_{R_y}$
which maps the kernel $K$ to the solution of the approximation problem in a RKHS \cref{approx_problem}.
In the same way, for each $\phi \in \mathcal{H}_1^\mathcal{X}$ we define the function $S^\dagger_{R_y}(\phi) := f^\dagger_{R_y}$
which maps the feature map $\phi$ to the $R_y$-generalized solution of the inverse problem \cref{Af=y}.
Then, for each class $\overline{\phi}$, we can define a map $
\overline{S^\dag_{R_y}}: \mathcal{H}_1^\mathcal{X} /  \sim^{\mathcal{X}} \to  \mathcal{H}_1 / \sim$ as follows
\begin{equation}
\label{class_map}
\overline{S^\dag_{R_y}}(\overline{\phi}) :=  
%\pi \circ S^\dag_R \circ s ~ (\overline{\phi}) =  
\pi ( S^\dag_{R_y} ( \phi ) ) ~ ,
\end{equation}
where $\phi$ is a representer of $\overline{\phi}$ and $\pi$ is the quotient map with respect to the equivalence relation $\sim$ in \cref{quotient}. 
This definition is well-posed since it does not depend on the choice of the representer $\phi$.
We can summarize this discussion with the commutative diagram in \cref{equivalence-diagram}.
\begin{figure}
\[
\begin{tikzcd}[ampersand replacement=\&,column sep=small]
  \phi \in \mathcal{H}_1^\mathcal{X} \arrow[pos=0.75]{rr}{\pi^{\mathcal{X}}}
     \arrow[swap]{dr}{\mathcal{K}}
     \arrow[swap]{dd}{S^\dagger_{R_y}}
     \&      \& 
  \overline{\phi} \in \mathcal{H}_1^\mathcal{X} /  \sim^{\mathcal{X}}
  \arrow[swap,swap]{ld}{1-1}
  \arrow{dd}{\overline{S^\dag_{R_y} }} \\
     \& K_\phi \in \Im(\mathcal{K}) 
    \arrow[pos=0.75,crossing over]{dd}{T_{R_y}}
     \& \\
f_{R_y}^\dagger \in \mathcal{H}_1
\arrow[pos=0.75]{rr}{\pi}
     \arrow[swap]{dr}{A_\phi}
     \&      \& \overline{f_{R_y}^\dagger} \in \mathcal{H}_1 / \sim \\
     \& g_{R_y} \in \mathcal{H}_K 
    \arrow[swap]{ru}{1-1} \&
\end{tikzcd}
\]
\caption{Commutative diagram summarizing the equivalence between approximation in a RKHS and linear inverse problems.}\label{equivalence-diagram}
\end{figure}
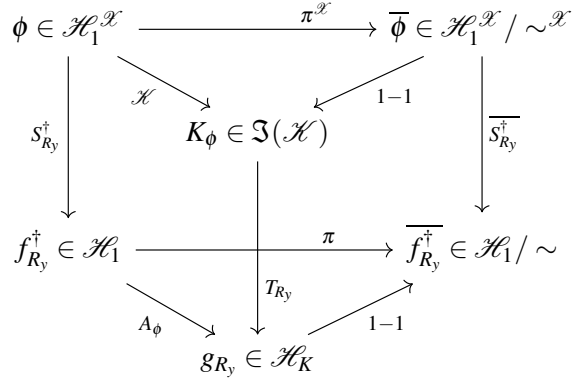
In synthesis, when an approximation problem in a RKHS  is provided with a feature map, it is equivalent to a linear inverse problem. 
%%% ABSTRACT
%In terms of abstract pairs, we can write $(\mathcal{H}_K,y) \leftrightarrow (A_{\bar \phi},y)$.
If a feature map is not given, we can associate with the approximation problem in a RKHS as many inverse problems as feature maps (and so features spaces) which give rise to the same kernel.

\subsection{Equivalence between methods}\label{section Equivalence between methods}
%regularized solutions of approximation problem in RKHS and inverse problems}
%In the previous section we deal with approximation and inverse problems when the data $y\in\mathcal{H}_2$ is not corrupted by noise.
%Since the $R$-generalized solution $f^\dag_R$ exists if and only if $Q_R(y) \in \Im(A)$ and does not depend continuously on the data,
When $y$ is corrupted by noise, the inverse problem needs to be addressed in a different way as the ${R_y}$-generalized solution $f^\dag_{R_y}$ may not exist or it may not depend continuously on the data.
% which we denote with $y^{\delta}$ and usually $\| y-y^{\delta}\|_{\mathcal{H}_2}\le\delta,$ where $\delta>0$. 
A well-known strategy common to both approximation and inverse problems is Tikhonov regularization \cite{engl_regularization_1996}.
%in general, The classical Moore-Penrose operator is not bounded, e.g. when the operator $A$ is compact and the dim $\Im(A)=+\infty$, 
It allows us to find solutions of the problem which depend continuously on the data by re-stating the approximation problem in RKHS $\mathcal{H}_K$ defined in equation \cref{approx_problem} as follows
\begin{equation}
\label{approx_probl_reg}
\hat{g}_{{R_y},\lambda} = \arg\min_{g\in\mathcal{H}_K} R_y( g)+\lambda \Omega(g)  ,
\end{equation}
and the inverse problem associated to the operator $A$ given data $y$ defined in equation \cref{fdaggerR} as follows
%, the Tikhonov regularization procedure is considered. Furthermore even when the problem is well-posed, as the major variety of finite dimensional problems, regularization is often needed since the problems are usually ill conditioned and lead to unstable solutions.
%We introduce the following generalized Tikhonov regularization schemes to deal with the approximation problem in RKHS $\mathcal{H}_K$ and the inverse problem associated to the operator $A$ given data $y$:
\begin{equation}
\label{inv_probl_reg}
\hat{f}_{{R_y},\lambda} = \arg\min_{f\in\mathcal{H}_1} R_y( Af)+\lambda \Omega(f) ~ . 
\end{equation}
In these generalized Tikhonov regularization schemes $R_y$ is usually called the data fidelity term, $\Omega$ is the penalty term and $\lambda>0$ is the regularization parameter. 
The purpose of the penalty term is to induce stability and to allow the incorporation of a priori information about the desired solution according to the magnitude of the parameter $\lambda$.
In this context we assume that the penalty term has the following form
\begin{eqnarray}
\Omega(h) := \psi(\| h\|_{\mathcal{H}}) ~ ,
\end{eqnarray}
where $\psi:[0,+\infty)\to\mathbb{R}_+$ is a continuous convex and strictly monotonically increasing real-valued function, $h$ is an element of an Hilbert space $\mathcal{H}$ and $\| \cdot \|_{\mathcal{H}}$ denotes its norm. 
% The standard Tikhonov penalty functional is simply
% \begin{eqnarray}
% \Omega(h)=\| h\|_{\mathcal{H}}^2.
% \end{eqnarray}
Now we show that the result of \cref{correspondence} 
%, explaining the correspondence between the $R$-generalized solution $f^{\dagger}_R$ defined in equation (\cref{fdaggerR}) and the solution $g_R$ defined in equation (\cref{approx_problem}), 
can be extended to the case of Tikhonov regularized solutions $\hat{f}_{R_y,\lambda}$ and $\hat{g}_{R_y,\lambda}$. 
\begin{theoremmm}
\label{theorem methods}
Under the same assumptions of \cref{correspondence} we have
%Given a data $y\in\mathcal{H}_2$. If $\Im(A)=\mathcal{H}_K$ then 
\begin{equation}
\label{corr-f-g}
% \hat{g}_{R,\lambda}= A \hat{f}_{R,\lambda} \qquad \text{and} \qquad 
\hat{f}_{R_y,\lambda}=\tilde{A}^{-1} \hat{g}_{R_y,\lambda} ~.
\end{equation}
\end{theoremmm}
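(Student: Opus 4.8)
The plan is to reprise the variational comparison used in the proof of \cref{correspondence}, adding the single extra ingredient that couples the two penalty terms. The pivotal observation is that, by the very definition of the RKHS norm in \cref{RKHS}, the restricted operator $\tilde{A}$ is an isometry from $Ker(A)^{\perp}$ onto $\mathcal{H}_K$. Indeed, $\|g\|_{\mathcal{H}_K}$ is defined as the minimal $\mathcal{H}_1$-norm among all preimages $w$ of $g$ under $A$, and this minimum is attained exactly at the element of $Ker(A)^{\perp}$; hence $\|\tilde{A}^{-1}g\|_{\mathcal{H}_1}=\|g\|_{\mathcal{H}_K}$ for every $g\in\mathcal{H}_K=\Im(A)$. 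Equivalently, for an arbitrary $f\in\mathcal{H}_1$ one has $\|f\|_{\mathcal{H}_1}\ge\|\tilde{A}^{-1}(Af)\|_{\mathcal{H}_1}=\|Af\|_{\mathcal{H}_K}$, with equality if and only if $f\in Ker(A)^{\perp}$.

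First I would transfer this norm inequality to the penalty. Since $\psi$ is strictly monotonically increasing, $\|f\|_{\mathcal{H}_1}\ge\|Af\|_{\mathcal{H}_K}$ gives $\psi(\|f\|_{\mathcal{H}_1})\ge\psi(\|Af\|_{\mathcal{H}_K})$, again with equality precisely when $f\in Ker(A)^{\perp}$. As the data term $R_y(Af)$ depends on $f$ only through $Af$, and $Af\in\mathcal{H}_K$ is a legitimate competitor in problem \cref{approx_probl_reg}, this yields for every $f\in\mathcal{H}_1$
\[
R_y(Af)+\lambda\psi(\|f\|_{\mathcal{H}_1})\ \ge\ R_y(Af)+\lambda\psi(\|Af\|_{\mathcal{H}_K})\ \ge\ \min_{g\in\mathcal{H}_K}\big(R_y(g)+\lambda\psi(\|g\|_{\mathcal{H}_K})\big),
\]
where the last minimum is attained at $\hat{g}_{R_y,\lambda}$ by definition of the regularized approximation problem.

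Next, setting $\tilde{f}:=\tilde{A}^{-1}\hat{g}_{R_y,\lambda}$, I would invoke the isometry once more: $A\tilde{f}=\hat{g}_{R_y,\lambda}$ and $\|\tilde{f}\|_{\mathcal{H}_1}=\|\hat{g}_{R_y,\lambda}\|_{\mathcal{H}_K}$, so the right-hand side of the chain above equals $R_y(A\tilde{f})+\lambda\psi(\|\tilde{f}\|_{\mathcal{H}_1})$. Thus $\tilde{f}$ attains the infimum of the regularized inverse functional over all of $\mathcal{H}_1$, i.e. it solves \cref{inv_probl_reg}. Because the combined functional is strictly convex (strict convexity of $R_y$ together with convexity of the penalty, under the hypotheses inherited from \cref{correspondence}), the minimizer is unique, and therefore $\hat{f}_{R_y,\lambda}=\tilde{f}=\tilde{A}^{-1}\hat{g}_{R_y,\lambda}$, which is exactly \cref{corr-f-g}.

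I expect the only delicate point to be the isometry step, namely verifying that the RKHS norm of \cref{RKHS} coincides with the $\mathcal{H}_1$-norm of the minimal-norm preimage and that this preimage is $\tilde{A}^{-1}g\in Ker(A)^{\perp}$. Everything else is a routine repetition of the argument of \cref{correspondence}, now carried out for the penalized functionals; the strict monotonicity of $\psi$ is precisely what guarantees that the penalty, like the minimal-norm selection in the unregularized case, singles out the component orthogonal to $Ker(A)$, so that no spurious contribution from $Ker(A)$ enters the optimal $\tilde{f}$.
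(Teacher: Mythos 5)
Your proposal is correct and follows essentially the same route as the paper: set $\tilde{f}:=\tilde{A}^{-1}\hat{g}_{R_y,\lambda}$, use that $\tilde{f}\in Ker(A)^{\perp}$ so $\|\tilde{f}\|_{\mathcal{H}_1}=\|\hat{g}_{R_y,\lambda}\|_{\mathcal{H}_K}$, and compare the penalized functional at an arbitrary $f$ with its minimum over $\Im(A)=\mathcal{H}_K$. You merely make explicit two steps the paper leaves implicit — the intermediate inequality $\psi(\|f\|_{\mathcal{H}_1})\ge\psi(\|Af\|_{\mathcal{H}_K})$ via the minimal-norm-preimage definition of the RKHS norm, and the uniqueness of the minimizer from strict convexity — which is a welcome clarification but not a different argument.
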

\begin{proof}
As in the proof of the \cref{correspondence} we have the identification $\Im(A) = \mathcal{H}_K$ as RKHSs and the hypotheses on functionals to minimize in \cref{approx_probl_reg,inv_probl_reg} are the same.
Let $\tilde{f} := \tilde{A}^{-1} \hat{g}_{{R_y},\lambda}$.
By definition of $\tilde{A}^{-1}$, $\tilde{f} \in Ker(A)^{\perp}$ and so $\| \hat{g}_{{R_y},\lambda}\|_{\mathcal{H}_K}=\| \tilde{f} \|_{\mathcal{H}_1}$. 
For all $f \in \mathcal{H}_1$ we have
\begin{equation}
\label{22}
\begin{split}
R_y(Af) +\lambda \psi(\| f\|_{\mathcal{H}_1}) & \ge \min_{g\in \Im(A)} R_y(g) +\lambda \psi(\| g\|_{\mathcal{H}_k})\\
& = R_y(\hat{g}_{{R_y},\lambda}) +\lambda \psi(\| \hat{g}_{{R_y},\lambda} \|_{\mathcal{H}_k})\\
& = R_y( A \tilde{f} ) +\lambda \psi(\| \tilde{f} \|_{\mathcal{H}_1})
\end{split}
\end{equation}
i.e. $\tilde{f}$ is solution of problem \cref{inv_probl_reg}. This concludes the proof.
\end{proof}
As in the case of $R_y$-generalized solutions, we have a commutative diagram for Tikhonov regularized solutions.
The diagram has exactly the same shape of the one shown in \cref{equivalence-diagram} but arrows and nodes refer to the solution of problems \cref{approx_probl_reg,inv_probl_reg}.
In particular, we have to replace:
% Let us now fix $y \in \mathcal{H}_2$. 
% Let us consider the equivalent relations defined in equations \cref{quotient}  and \cref{feature_map_quotient}, respectively.
% For each $K \in \Im(\mathcal{K})$ we define 
$T^{\dagger}_{R_y}$ with the function $T_{{R_y},\lambda}(K) := \hat{g}_{{R_y},\lambda}$ which maps the kernel $K$ to the Tikhonov solution \cref{approx_probl_reg};
%In the same way, for each $\phi \in \mathcal{H}_1^\mathcal{X}$ we define 
$S^\dagger_{R_y}$ with the function $S_{R_y,\lambda}(\phi) := \hat{f}_{R_y,\lambda}$ which maps the feature map $\phi$ to the Tikhonov solution \cref{inv_probl_reg};
% \[
% \begin{tikzcd}[ampersand replacement=\&,column sep=small]
%   \phi \in \mathcal{H}_1^\mathcal{X} \arrow[pos=0.75]{rr}{\pi^{\mathcal{X}}}
%      \arrow[swap]{dr}{\mathcal{K}}
%      \arrow[swap]{dd}{S_{{R_y},\lambda}}
%      \&      \& 
%   \overline{\phi} \in \mathcal{H}_1^\mathcal{X} /  \sim^{\mathcal{X}}
%   \arrow[swap,swap]{ld}{1-1}
%   \arrow{dd}{\overline{S_{{R_y},\lambda}}} \\
%      \& K_\phi \in \Im(\mathcal{K}) 
%     \arrow[pos=0.75,crossing over]{dd}{T_{{R_y},\lambda}}
%      \& \\
% \hat{f}_{{R_y},\lambda} \in \mathcal{H}_1
% \arrow[pos=0.75]{rr}{\pi}
%      \arrow[swap]{dr}{A_\phi}
%      \&      \& \overline{\hat{f}_{R,\lambda}} \in \mathcal{H}_1 / \sim \\
%      \& \hat{g}_{{R_y},\lambda} \in \mathcal{H}_K 
%     \arrow[swap]{ru}{1-1} \&
% \end{tikzcd}
% \]
$\overline{S^{\dagger}_{R_y}}$ with the map $\overline{S_{{R_y},\lambda}}$  defined as in equation \cref{class_map} by substituting $S_{R_y}^\dagger$ with $S_{{R_y},\lambda}$;
% Roughly speaking, $\overline{\phi}$ represents the class of inverse problems which correspond to the same RKHS approximation problem which is, in its turn, represented by $K_\phi$. 
% In the same way, 
$\overline{f^\dagger_{R_y}}$ with $\overline{\hat{f}_{{R_y},\lambda}}$, which is the class of Tikhonov solutions corresponding to the Tikhonov solution of the approximation problem in the RKHS represented by $K_\phi$.

\section{Discrete data}
\label{discrete-data}
The purpose of this section is to show that some applied problems, namely discrete inverse problems, interpolation problems and statistical (inverse) learning, despite appearing different, can be though of as instances of the approximation problem in a RKHS \cref{approx_problem}. 
%or its equivalent version \cref{Af=y}. 
To this end, we introduce a suitable discretization operator 
%(see \cref{Approximation problems in RKHS}) measures the divergence between the data $y$ and the unknown function $g$.
%In the discrete inverse and learning problems data are finite dimensional samples.
% We now show that,
% by defining a suitable discretization operator 
mapping the infinite dimensional data $y$ to a finite number of samples together with a specific form of the functional ${R_y}$.
%, by keeping fixed $g$ (or equivalently $Af$) and 
%by changing the assumption on the data $y$,
%as long as
%available data are interpreted by means of a suitable discretization process. 
% The class of RKHS approximation problems presented in the above section applies in many fields.
% The bridge between these theoretical problems and their applications is the discretization.
% Indeed, we now show that, according to the type of the discretization, RKHS approximation and inverse problems cover four main application domains: statistical supervised learning and inverse learning, discrete inverse problems and interpolation problems \cite{}.
%Indeed, according to different discretization schemes, 
% we can recast the problem \cref{approx_problem} in order to formulate supervised kernel learning and inverse learning problems, as well as to discrete inverse problems and interpolation problems.
The idea of the discretization operator is to 
%For ``changing the assumption on the data $y$'' we mean to 
consider, in place of the data $y$, a set of samples $\{(X_i,Y_i)\}_{i=1}^n$ statistically or deterministically related to $y$.
In this way we will retrieve the formulation of various applied problems by minimizing the empirical form of the ideal functional $R_y$.
% We will see that t
%In this section we follow both the stochastic and deterministic frameworks by restricting our attention to the case of stochastic discretization for RKHS approximation, i.e. the case of supervised statistical learning problems, and deterministic discretization for inverse problems, i.e. the case of discretized inverse problems.
%%%%%%% We propose that the discretization process is managed by a suitable sampling operator so that each one of the aforementioned problems shall be formulated by choosing appropriate conditions on the operator itself. 
% We formalize the discretization schemes by introducing the sampling operator.
% us consider to replace the infinite dimensional data of the RKHS approximation problem $y\in\mathcal{H}_2$ with a suitable distribution for which $y(x)$ represents some specific feature for each $x \in \mathcal{X}$ as follows. 
% Data una loss function V possiamo associargli una caratteristica di una distribuzione rho sconosciuta Fv per esempio ...
% Noi siamo interessati a fare il viceversa a costruire un'applicaz che associ a una funzione le distribuzioni di cui lei e' caratteristica rispetto a V quindi definiamo var ro a questo punto per ogni x associamo $y(x)$ attraverso var rho una distribuz tale che Fv varrho =id. Definiamo $S_x$ operatore di campionamento....
To realize the discretization operator, i.e. a map from $\mathcal H_2$ to a sample space, we proceed as follows.
% From the statistical learning point of view we can easily define  a map 
% %in the opposite direction, 
% %formulation offers a way to relate 
% from the unknown sample distribution to a characteristic of the distribution itself.
% We start from this map for defining our own.
%In order to formalize this concept, 
Let us consider the set $\mathcal{P}$ of all possible Borel probability distributions over a compact space $\mathcal Y\subseteq\mathbb{R}$ and let $F_V:\mathcal{P}\to\mathbb R$ be a function defined by
\begin{equation}\label{F_V}
F_V(\tilde{\rho}) := \arg\min_{w\in\mathbb{R}} \int_{\mathcal{Y}} V(Y,w) ~ d\tilde{\rho}(Y),
\end{equation}
where $V$ is called {\it loss function} in the statistical learning terminology \cite{rosasco2004loss}. The function $F_V$ is defined provided that $V: \mathcal{Y}\times\mathbb{R}\to [0,+\infty)$ is measurable and integrable with respect to the first variable and %$V$ is continuous on $\mathcal{Y}\times \mathbb{R}$,
% $V(\cdot,w)$ is integrable for each $w\in\mathbb{R}$ and 
$V(Y,\cdot)$ is lower semicontinuous, strictly convex and coercive $\forall$ $Y\in\mathcal{Y}$. 
%with respect to the second variable)
Given a function $V$, $F_V(\tilde{\rho})$ can represent a characteristic of the distribution $\tilde{\rho}$: by instance, if $V$ is the square loss usually used in regression problems, i.e. $V(Y,w)=(w-Y)^2$, or $V$ is the Kullback-Leibler divergence, then $F_{V}(\tilde{\rho})$ is the expected value,  i.e. $F_{V}(\tilde{\rho})=\mathbb{E}(Z)$, where $Z$ is a random variable with probability distribution $\tilde{\rho}$; if $V$ is the square loss usually used in classification problems, i.e. $V(Y,w)=(1-Yw)^2$ then $F_{V}(\tilde{\rho})=\mathbb{E}(Z)/\mathbb{E}(Z^2)$; if $V$ is the absolute value loss, i.e. $V(Y,w)=|w-Y|$ then $V$ is the median of the distribution $\tilde{\rho}$. 
%These are only some examples of usual loss functions.  

We now want to define a map from $\mathbb{R}$ to $\mathcal{P}$, roughly speaking an inverse of $F_V$. We 
%Starting from this point, we 
%want to The aim of this section is to 
introduce an application 
\begin{equation}\label{varrho}
\begin{split}
\vartheta:\mathbb{R} & \to \mathcal{P}\\
 z & \to \tilde{\rho}_z,
\end{split}
\end{equation}
mapping $z\in\mathbb{R}$ in a distribution $\tilde{\rho}_z$ such that $F_V\circ\vartheta=id$. Given a function $y$, $\vartheta$ maps $y(x)$ to a distribution $\tilde{\rho}_{y(x)}$ such that $y(x)$ is the characteristic of $\tilde{\rho}_{y(x)}$ for each $x\in\mathcal{X}$. 
% of distributions for each $x\in\mathcal{X}$ , where 
%The type of characteristic is chosen by fixing the function $V$.
Therefore we define the following sampling operator 
% %For each given $V$, $F_V$ determines a specific characteristic of the distribution $\tilde \rho$, such as its mean, median, etc.
% Now, for any function : \mathbb{R} \to \mathcal P$ such that $F_V \circ \vartheta = id$, %i.e.
% %the equation .. applies from some $\rho(\cdot|X)$ and so 
% %$\vartheta(y)$ is a distribution $\rho(Y|X)$ such that equation \cref{y-F_V} is satisfied.
\begin{equation}
\label{sampling operator}
\begin{split}
S^{(n)}_{\bar x, \vartheta} : \mathcal{H}_2 & \to \mathcal{Y}^n\\
 y & \to ( Y_i )_{i=1,\dots,n}
\end{split}
\end{equation}
where each $Y_i$ is drawn from the distribution 
%$\rho(\cdot | X=x_i):=\vartheta(y(x_i))$ 
$\tilde{\rho}_{y(x_i)}:=\vartheta(y(x_i))$
and the set of points 
$\bar{x}=\{x_1,\dots,x_n\}\subset\mathcal{X}$ 
can be either given a priori (in a deterministic manner) or drawn from a probability distribution 
% $\nu^{\otimes n}$ 
% where $\nu$ is a probability distribution over $\mathcal X$.
$\nu$ over $\mathcal X$.
% such that
% %We consider distributions $\rho$ of the form $\rho(X,Y)=\rho(Y|X)\nu(X)$, where $\nu(X)$ is a probability on $\mathcal X$ and $\rho(Y|X)$ is the conditional probability on the set $\mathcal{Y}$. 
% %we consider  distributions $\rho_x$ such that $y(x)$ is given by
% \begin{equation}\label{y-F_V}
% y(X) = F_V(\rho(\cdot|X))~, \quad \forall ~ X \in \mathcal{X}
% \end{equation}
% This formulation highlights that, for each $x \in \mathcal{X}$, 
% \begin{equation}
% y(x)=F_V(\rho(\cdot|X=x))
% \end{equation}
% and $y(x)$ is a given parameter of the distribution $\rho(\cdot|\cdot)$ and the function $V$ determines {\it which} parameter: by instance, if $V$ is the square loss or the Kullback-Leibler divergence then $y$ is the expected value, i.e. $y(X)=\mathbb{E}(Y|X)$.
%We can formalize the sampling process as follows.
% The sampling operator $S^{(n)}_{\nu,\rho}$ depends on a distribution $\rho(Y|X)$ 
% %of which $y$ represents a given feature (according to a function $V$), 
% and $\nu$. 
%Thanks to the above construction,
Once $V$ is fixed, for any chosen sampling $S^{(n)}_{\bar x, \vartheta}$ let us consider the functional defined as %(METTERE UNA SOLA TRA $\tilde{\rho}$ e $\vartheta$)
\begin{equation}
\label{general-problem}
R_{y}(g) 
%= R_y^{\nu, \vartheta}(g) 
:=
%\int_{\mathcal X \times Y} V(Y,g(X)) d\vartheta(y(X))(Y) d\nu(X) = 
\int_{\mathcal X \times Y} V(Y,g(X)) d\tilde{\rho}_{y(X)}(Y) d\nu(X)
\end{equation}
which depends on $\nu$ and $\vartheta$ as well as on $y$ and on $V$. % a suitable loss function.
When the set of sampling points $\bar{x}$ is given in a deterministic manner, we assume $d\nu(x) = dx$.
%the uniform probability %Lebesgue measure
%over $\mathcal X$?. 
From now on, we will denote the distribution $\tilde{\rho}_{y(x)}=\rho(\cdot|X=x)$ since it will represent the conditional distribution with respect to $X=x$. 
Henceforth, we consider the approximation problem in a RKHS \cref{approx_problem} with functional $R_y$ given in \cref{general-problem}. By applying $S^{(n)}_{\bar x, \vartheta}$ to the data $y$, we now show that we can retrieve the formulation of different applied problems according to 
whether $\rho$ and $\nu$ are known or not and, if they are known, according to their specific explicit form.
In general, when just a finite number of sample values is known, all these problems are addressed by minimizing the following empirical form of the functional, i.e.
%$R_y$ of the problem \cref{approx_problem} becomes
\begin{equation}
\label{general-discrete-problem}
R_{\mathcal Z_n}(g) := 
\frac{1}{n} \sum_{i=1}^n V((S^{(n)}_{\bar{x},\vartheta}(y))_i,g(x_i)) ~.
%d\rho = \frac{1}{n} \sum_{i=1}^n (S^{(n)}_{\bar{x},\vartheta}(y))_i
\end{equation}
In particular, when $\rho(\cdot|\cdot)$ and $\nu$ are not known we retrieve the formulation of statistical learning problems, %where $V$ determines the characteristic of the unknown distribution $\rho$ to retrieve, 
while if $\rho(\cdot|\cdot)$ and $\nu$ are given 
we have the following cases. 
\begin{itemize}
\item[i)] Stochastic case: the samples $Y_i$ are drawn from $\rho(Y|X=x_i)$ and $x_i$ are the elements of $\bar x$, given at random according to a distribution $\nu$. In this case 
% With this sampling operator $S^{(n)}_{\nu,\vartheta}$ 
we can describe inverse regression problems with random matrix design.
\item[ii)] Semi-stochastic case: the samples $Y_i$ are drawn from a generic probability distribution $\rho(Y|X=x_i)$ and $x_i$ are given not at random. In this case we assume $d\nu(x)=dx$. 
% then the sampling operator is $S^{(n)}_{\bar x, \vartheta}$ and 
This is the setting used for describing discrete inverse problems with random noise under the maximum likelihood approach or inverse regression problems with deterministic matrix design. 
\item[iii)] Deterministic case: the samples $Y_i$ are the values of the function $y$ at the points $x_i$, given not at random. This can be thought of as the samples $Y_i$ are drawn from 
$\rho(Y|X=x_i) = \delta( Y - y(X) | X=x_i )$. In this case we assume $d\nu(x)=dx$ and 
%for each $x \in \mathcal{X}$,
the sampling operator can be denoted by $S^{(n)}_{\bar x}$ as in \cite{smale_shannon_2004}.
This is the setting used for describing interpolation problems or discrete inverse problems with deterministic noise when we consider a noisy version $y^{\delta}$ of $y$.
\end{itemize}
The crucial difference between the first and the latter two cases is that in the first case discretization has to be defined according to a stochastic process while in the second and third cases at least a part of the discretization is usually defined in a deterministic manner. 
Incidentally, we notice that in learning problems a given point can be sampled more than once whereas in inverse problems each sample $x \in \mathcal X$ is usually taken once.
%This difference is formalized by taking the set $\bar x$ in a deterministic or in a statistical manner, as described above.
%The discretization thus depends on the nature of the application. 
For example, in a machine learning problem the samples can be view as the result of a sampling process which takes place upstream of the definition of the problem itself, or in any way, independently of the will of the learner.
% so that the sample distribution is unknown to the learner.
It is indeed formalized as an empirical process in accordance with an unknown distribution.
On the contrary, in an inverse problem the discretization usually takes place downstream of the problem: for example, in the case of an industrial device, it can be defined during the design phase or determined even later, after the signal acquisition, as a variable to be optimized in the inversion process.
The \cref{discretization-schemes} summarizes the main sampling schemes corresponding to different applications.
\begin{table}[!h]
\caption{Discretization schemes of a reproducing kernel approximation problem}\label{discretization-schemes}
\begin{center}
\begin{tabular}{c c c}
\toprule
& \multicolumn{2}{c}{sampling $S^{(n)}_{\bar x, \vartheta}$ 
%and functional $R_y^{\nu, \vartheta}$
}
\\
\cmidrule(r){2-3}
& $\rho(\cdot|\cdot)$ and $\nu$ unknown & $\bar x$ given and $\rho(\cdot|\cdot)$ known\\
\midrule
direct & learning & interpolation \\
inverse & inverse learning & discrete inverse problems\\
\bottomrule
\end{tabular}
\end{center}
\end{table}

% We remark that this type of approach does not lead to the classic presentation of learning problems where the samples are not given as the result of a discretization process. 
We remark that for learning problems this formulation differs from the classical one where the samples are given without any discretization process.
In the classical formulation the crucial hypothesis is that the samples are drawn independently and identically distributed according to a distribution $\rho(\cdot,\cdot)$ and there is no need to introduce from the beginning $\nu$ and $\rho(\cdot|\cdot)$, but these last two distributions are the result of the factorization of $\rho$. 
Moreover, $y$ is introduced after $\rho$, it depends on the choice of $V$ and represents the parameter of $\rho$ which one wants to learn.

\subsection{Learning from examples}\label{section learning}
We introduce the supervised learning problem in the standard way to highlight the link with our formulation. 
% To this end, we need to change the hypothesis of the problem \cref{approx_problem} which requires that the data $y$ is known, with the hypothesis to know
% In this case, we suppose that, instead of knowing $y \in \mathcal{H}_2$, we only know 
We suppose to know
a finite number of samples 
\begin{equation}
\label{stochastic_samples}
\mathcal{Z}_n
:=  %\{(X_i,Y_i)\}_{i=1}^n =
\{(X_1,Y_1),\dots,(X_n,Y_n)\} ~.
\end{equation}
Such samples are drawn independently identically distributed according to a given (but unknown) probability distribution $\rho$
on $\mathcal{Z} = \mathcal{X}\times \mathcal{Y}$ where $\mathcal{X}\subseteq \mathbb{R}^d$, with $d>0$, and $\mathcal{Y}\subseteq \mathbb{R}$. $\mathcal{X}$ and $\mathcal{Y}$ can be assumed to be compact spaces and $\rho$ admits the following factorization
%the following property:
% \begin{assumption}
%\begin{itemize}
%\item 
%$\rho$ factorizes as 
\begin{equation}
\label{rho} 
\rho(X,Y) = \rho(Y|X) \nu(X)
\end{equation} 
where $\nu$ is the marginal distribution on $\mathcal{X}$ and $\rho( \cdot | X=x)$ is the conditional distribution on $\mathcal{Y}$ for almost $x\in\mathcal{X}$.
%\item $y$ is the expectation with respect to the conditional measure $\rho(\cdot,\cdot)$, i.e. equation \cref{g_nu} holds.
%\end{itemize}
% \end{assumption}
Given a measurable function $g$ the ability of $g$ to describe the distribution $\rho$ is measured by the expected risk defined as
% in order to take into account the distribution $\rho$ in place of $y$. We then redefine the minimization functional $R$ in problem \cref{approx_problem} as
\begin{equation}
\label{R_definition}
R_{\rho}(g) = \int_{\mathcal{X}\times\mathcal{Y}} V(Y, g(X)) ~ d\rho(X,Y) ~.
\end{equation}
We remark that thanks to the hypothesis \cref{rho}
$y$,
defined as $y(x)=F_V(\rho(\cdot|X=x))$, is the minimizer of the expected risk \cref{R_definition} (over all measurable functions), i.e. it can be seen as an ideal estimator of the unknown distribution $\rho$ .
% (the existence and uniqueness of the minimizer of $R_{\rho}$ are guaranteed under suitable assumptions on $V$: we just require that 
% %$V$ is continuous on $\mathcal{Y}\times \mathbb{R}$,
% $V(\cdot,w)$ is integrable for each $w\in\mathbb{R}$ and $V$ is lower semicontinuous, strictly convex and coercive with respect to the second variable). 
However only the set $\mathcal{Z}_n$ is available and therefore 
learning is performed by minimizing over the RKHS $\mathcal{H}_K$ the empirical risk given by
\begin{equation}
\label{ERM-g}
R_{\mathcal{Z}_n}(g)=\frac{1}{n}\sum_{i=1}^n V(Y_i,g(X_i)) ~.
\end{equation}
%for all measurable functions $h: \mathcal{X}\times \mathcal{Y}\to\mathbb{R}$ applies
%It is well-known that the regression function $g_\nu$ is the minimizer of the expected risk, which is defined as
%\begin{eqnarray}
%\label{R_definition}
%R(g)=\mathbb{E}_{\mathcal{Z}\sim \nu}\left[ \mathcal{V}(\mathcal{Z},g)\right]=\int_{\mathcal{X}\times\mathcal{Y}} V(y, g(x)) d\nu(x,y),
%\end{eqnarray}
%where in our setting $z=(x,y)\in\mathcal{Z}:=\mathcal{X}\times\mathcal{Y}$ and $\mathcal{V}(z,g)=V(y,g(x))$, where $V$ represents a loss function.

%and to fix an hypothesis space where the minimizer is found. 
Therefore the problem \cref{approx_problem} reduces to
\begin{equation}
\label{hat g^(n)_R}
\hat{g}_R^{(n)}:=\arg\min_{g\in\mathcal{H}_K} R_{\mathcal{Z}_n}(g)~.
\end{equation}
%where $\mathcal{H}_K$ is a reproducing kernel Hilbert space. 
From a numerical point of view the solution $\hat{g}^{(n)}_R$ is not stable and therefore, following the approach of Tikhonov regularization, it is useful to introduce a penalty term in order to stabilize the solution. 
Therefore, the regularized problem 
%which we will dealt with 
is the following:
\begin{equation}
\label{approx_probl_sampling_noisy}
\hat{g}^{(n)}_{R,\lambda}:=\arg\min_{g\in\mathcal{H}_K} R_{\mathcal{Z}_n}(g)+\lambda\psi(\| g\|_{\mathcal{H}_K})~,
\end{equation}
where $\lambda$ is the regularization parameter.
This is the classical formulation of statistical learning theory, 
%also known as structural risk minimization, 
in which $\mathcal{X}$ and $\mathcal{Y}$ represent the input and the output space, respectively, and the aim is to find a function $g$ such that $g(x)$ is a good estimate of the output when a new input $x$ is given.

The result of this construction can be obtained by \cref{sampling operator,general-problem} by taking unknown $\vartheta$ and $\nu$, i.e the samples $\mathcal{Z}_n$ can be seen as the result of the action of the sampling operator $S_{\nu,\vartheta}^{(n)}$, where $X_1,\ldots,X_n$ are drawn from the distribution $\nu$, the samples $Y_1,\ldots,Y_n$ are drawn from $\rho(Y | X)$ and the the factorization in $\cref{rho}$ applies.
Moreover, we note that this formulation takes into account the 
%same stochastic discretization process defines an
inverse statistical learning problem by considering $Af$ instead of $g$ as given in equation \cref{Af=y}
%corresponding to the RKHS approximation one 
\cite{blanchard2017optimal,rastogi_optimal_2017}.
%instance of  an example of explicit inverse problem under the same stochastic discretization process is the inverse statistical learning problem 
%\end{remark}

\subsection{Discrete inverse problems}
%The discretization of linear inverse problems 
%has its roots in a deterministic setting, so we can present the process of discretization in inverse problems as follows.
In this paragraph we introduce discrete inverse problems with a deterministic discretization scheme \cite{engl_regularization_1996,groetsch_inverse_1993}.
%by means of a two different discretization process applied to the RKHS approximation problem \cref{approx_problem} when $\mathcal{H}_K = \Im(A)$. 
%first we present the semi-stochastic approach \cite{bertero_introduction_1998,vogel2002computational,kaipio2006statistical,tarantola2005inverse} and then we follow 
% % According to the discretization scheme of the infinite dimensional data $y$, inverse problems have been considered both in a deterministic and in a statistical framework \cite{engl_regularization_1996,bertero_introduction_1998,vogel2002computational,kaipio2006statistical,tarantola2005inverse}.
% The common element of these two settings is that the unknown parameter space is discretized solely by following a deterministic scheme.
%\subsubsection{Mixed deterministic and stochastic case}
%''SABRY: cambiare notazione. Il maximum likelihoood lo presentiamo come problema inverso ma non scriviamo la A..''
%Also in this case we generated a vector $\mathcal{Z}_n$ analogous to the one defined in equation \cref{samples}.
%Moreover we have that the problem \cref{Af=y} reduces to an inverse problem from discrete data \cref{discretized inv probl2}.
%Such a discretization scheme is adopted in inverse problems by many authors and it is called  semi-stochastic \cite{bertero_introduction_1998,vogel2002computational}.
%Extraordinarily important methods, e.g. the generalized cross validation, have been developed under this assumption 
% %\subsubsection{Fully deterministic case}
% %On the other hand, w
% We now follow the fully deterministic approach to inverse problems (\cref{Af=y}). 
We suppose to know a set of $n$ samples of the infinite dimensional data $y$ (or of a noisy version $y^{\delta}$) computed in the points $x_1,\ldots,x_n$. 
This assumption can be formalized by means of the sampling operator $S^{(n)}_{\bar x}$ which yields the set of samples
\begin{equation}
\label{deterministic_samples}
\mathcal{Z}_n
:= 
% D^{(n)}_{x %X^{\otimes n},Y^{\otimes n} }(y) = %\{(X_i,Y_i)\}_{i=1}^n =
\{(x_1,y_1),\dots,(x_n,y_n)\}
\end{equation}
where  $y_1:=y(x_1),\dots,y_n:=y(x_n)$.
%and in analogy with the learning discretization scheme (equation \cref{samples}), we denote this set of points with $\mathcal{Z}_n$. 
%AGGIUNGERE 1 FRASE PER DIRE : QUESTO CASO SI CONFIGURA COME ALL'OPPOSTO DEL CASO COMPLETAMENTE STOCASTICO. INFATTI: CAMPIONAMNETO ESATTO DI DATI INESATTI (NOISY) VS CAMPIONAMENTO RANDOM DI DATI ESATTI.
%as they differ essentially in the way they have been generated and the result of the process is in both cases an $n$-dimensional vector $\mathcal{Z}_n$.
%%%
\ifdefined\ENGL
LEGAME ENGL: We give a generalization of the convergence result obtained in Theorem 3.24 [Engl] about the convergence of the solution of the projection method, called dual least-square method to the classical generalized solution of inverse problem, that is in the context of least square solutions. In our context we use our generalization of the least square solution of minimal norm. In fact the generalized least square solution is the the solution defined in eq. (\cref{fdaggerR}), when $R_y(Af)=\| y-Af\|^2$. We put in the following more general setting. 
\fi
%%%
%As well as in the previous cases, 
In this case, the functional \cref{general-problem} takes the form
\begin{equation}
\label{R}
R_y(Af)=\int_{\mathcal{X}} V(y(x),Af(x)) dx~.
\end{equation}
%where we used the subscript $y$ for the sake of uniformity of the notation.
%In general, the variational approach to inverse problems, consists in minimizing a functional of the form
%where for simplicity $\mathcal{X}$ is a compact set of $\mathbb{R}^p$ and, without loss of generality, we set $\mathcal{X}=[0,1]^p$ and we assume $V$ continuous.
%PUNTO 3 DELTA. 
Obviously when $V(y(x),Af(x))=(y(x)-Af(x))^2$ the problem reduces to the least squares minimization with $R_y(Af)=\| y-Af\|^2$.
%nel caso della generalized least square solution. Da Engl si ha che la soluzione least square del problema proiettato converge alla generalized least square solution. In our setting we consider the solution of 
%The set $\mathcal{Z}_n$ defined in equations % \cref{stochastic_samples}, 
%\cref{semi_stochastic_samples} and \cref{deterministic_samples} can be thought of as independent of how it has been obtained, be it the result of a stochastic or a deterministic process.
%In both cases, 
%instead of the data $y$ we can consider $\mathcal{Z}_n$. Thus, 
When the set of samples $\mathcal{Z}_n$ is available 
% has been generated according to 
% %one of 
% the discretization processes described by %\cref{semi_stochastic_samples} or 
% \cref{deterministic_samples}, 
we minimize the functional
\begin{equation}\label{R_n}
R_{\mathcal{Z}_n}(Af)=\frac{1}{n}\sum_{i=1}^{n} V(y_i,(Af)(x_i))~.
\end{equation}
%In the case of least square $R_n(y,Af)=\frac{1}{n}\sum_{i=1}^{n} (y_i-(Af)(x_i))^2$, which is the norm in $\mathbb{R}^n$ associated to the inner product of the empirical $L^2$ structure 
%\begin{eqnarray} <y,y'>_{\mathbb{R}^n}:=\frac{1}{n}\sum_{i=1}^{n} y_i y'_i, \end{eqnarray}
%che formalmente si ottiene andando a sostituire al posto della misura di Lebesgue la misura ....(non mi ricordo il nome). Noi generalizziamo, considerando $V$ una loss generica, alla quale chiediamo la continuita'. Diamo il seguente teorema
In this way we estimate the solution of the following discretized inverse problem 
%In order to derive the discrete inverse problem from the infinite dimensional one, it is sufficient to apply the sampling operator $S^{(n)}_{\cdot}
%: \mathcal{H}_2 \to \mathbb{R}^n$ to equation \cref{Af=y}.
% for any discretization scheme.
%such that, $\forall$ $g\in\Im(A)$
%\begin{eqnarray}
%(S^{(n)} g)_i=<g,K_{x_i}>_{\mathcal{H}_K}= g(x_i) \quad i\in\{1,\dots,n\}.
%\end{eqnarray}
%We consider the following discretized inverse problem
%\begin{eqnarray}\label{discretized inv probl}
%A^{(n)} f=y^{(n)},
%\end{eqnarray}
%with $A^{(n)}:=S^{(n)} A$ and $y^{(n)}:=S^{(n)} y$. 
\begin{equation}
\label{discretized inv probl}
y_i = (Af)(x_i) \quad i\in\{1,\dots,n\}.
\end{equation}
Analogously to the procedure followed in \cref{section Infinite dimensional setting}, 
% in the infinite dimensional setting
we 
%solve the discretized inverse problem (\cref{discretized inv probl}), by 
introduce the $R_{\mathcal{Z}_n}$-generalized solution as follows: $(\hat{f}^{(n)}_R)^{\dagger}$ is the minimum norm solution of the problem
\begin{equation}\label{discretized}
\arg\min_{f\in\mathcal{H}_1} R_{\mathcal{Z}_n}(Af).
\end{equation}
If we define $\mathcal{S}_{A,R_{\mathcal{Z}_n}}$ the set of solution of \cref{discretized}, we have that
\begin{equation}\label{fdaggerRdiscretized}
(\hat{f}^{(n)}_R)^{\dagger}=\arg\min_{f\in\mathcal{S}_{A,R_{\mathcal{Z}_n}}} \| f\|_{\mathcal{H}_1}.
\end{equation}
% \begin{eqnarray}
% (f^{(n)}_R)^{\dagger}=\arg\min_{f\in\mathcal{F}_{A}} \| f\|_{\mathcal{H}_1}
% \end{eqnarray}
% \begin{eqnarray}
% \hat{f}^{(n)}_R:=\arg\min_{f\in\mathcal{H}_1} R_{\mathcal{Z}_n}(Af). %^{(n)}
% % =\arg\min_{f\in\mathcal{H}_1} \frac{1}{n}\sum_{i=1}^{n}V(y_i, (Af)(x_i)).
% \end{eqnarray}
As we noticed in the previous section, it is preferable to regularize the $R_{\mathcal{Z}_n}$-generalized solution for a stability issue, being the data $y_i$ usually corrupted by noise. 
%we do not put in evidence the dependence from a noise level which can denote by $\delta$, but its dependence in the data is implied. 
Therefore, in general we solve the Tikhonov regularization problem, i.e.
\begin{equation}
\label{inv_probl_sampling_noisy}
\hat{f}^{(n)}_{R,\lambda}:= \arg\min_{f\in\mathcal{H}_1} R_{\mathcal{Z}_n}(Af)+\lambda \psi(\| f\|_{\mathcal{H}_1}), %^{(n)}
% =\\
% &=&\arg\min_{f\in\mathcal{H}_1} \sum_{i=1}^{n} V(y_i,Af(x_i))+\lambda \psi(\| f\|_{\mathcal{H}_1}),
\end{equation}
where $\lambda>0$ is the regularization parameter.
%as nodes of an interpolation problem. 
%Therefore, in the next corollary, we use the notation $\mathcal{Z}_n$ to simply refer to a set of samples.

\begin{remark}
Maximum likelihood approach \cite{bertero_introduction_1998,vogel2002computational,kaipio2006statistical,tarantola2005inverse}.
We consider the discretized inverse problem \cref{discretized inv probl}, 
%Following the semi-stochastic approach, let us consider $n$ points 
where 
$x_1,\ldots,x_n$ are $n$ points deterministically identified, and for each $i \in \{1,\ldots,n\}$ 
% let us consider to know a sample, 
% namely $Y_i$, 
we know the sample $Y_i$
from a given probability distribution $\rho(Y|X=x_i)$. 
The main difference with respect to the learning framework is that here the probability distribution $\rho(\cdot|\cdot)$ is known 
%(by instance it can be a gaussian distribution) 
and the quantity to be determined is the parameter $f$ which characterizes the distribution $\rho(\cdot|\cdot)$.
%(by knowing the model of the inverse problem $Af(x_i)=Y_i$). 
For this reason we denote the distribution $\rho(\cdot|\cdot)$ with $\rho_{Af}(\cdot|\cdot)$ to highlight that it depends on the parameter $Af$. 
This approach can be formalized by means of the sampling operator $S^{(n)}_{\bar x,\vartheta}$ which yields the sample set
\begin{equation}
\label{semi_stochastic_samples}
\mathcal{Z}_n
:= 
%D^{(n)}_{ x, \mu}(y) = %\{(X_i,Y_i)\}_{i=1}^n =
\{(x_1,Y_1),\dots,(x_n,Y_n)\} ~,
\end{equation}
% The link to the infinite dimensional problem is given by
%\begin{equation}
%y(x) = F_V(\rho(Y|X=x)) ~ .
%\end{equation}
% \begin{equation}
% y(x) = \mathbb{E_\rho}(Y|X=x) ~ .
% \end{equation}
%This is the same of assumption \cref{y-F_V}.
where $\vartheta(Af(x)):= \rho_{Af}(\cdot|X=x)$. In the maximum likelihood approach the choice of $V$ is such that
\begin{equation}
V(Y,Af(x)) ~ d\rho_{Af}(Y|X=x) = - \log \rho_{Af}(Y|X=x) ~ dY ~.
\end{equation}
%If we take 
% \begin{equation}
% V(Y,g(x)) d\rho_g(Y|X=x) := -\left( \log \rho_g(Y|X=x) \right) dY ~,
% \end{equation}
%%$$
%%p(y(x),g(x)) = \int_{\mathcal{Y}} V(Y,g(x)) d\rho(Y|X=x) = 
%%$$
% the minimizing functional \cref{R_definition} becomes
% \begin{equation}
% R_{\nu}(g) = \int_{\mathcal{X}} \left( \int_{\mathcal{Y}} -\log(\rho_g(Y|X=x)) ~ dY 
% \right) d\nu(x)~.
% \end{equation}
With this choice equation \cref{general-discrete-problem} takes the form
% leads to the  provided that only a finite number of samples is available at points $\bar x = \{x_1,\ldots,x_n\}$, i.e.
\begin{equation}
\label{ML}
R_{\mathcal{Z}_n}(g) = \frac{1}{n} \sum_{i=1}^n -\log(\rho_{Af}(Y_i|x_i)) ~,
\end{equation}
which corresponds to the negative-log formulation of the maximum likelihood approach.
% one can exploit the maximum likelihood approach which leads to minimize
% % \begin{eqnarray}
% % R_{\mu}(g) = 
% % %\mathbb{E}_{\mu} \left[ \mathcal{-\log}(\mathcal{Z},g)\right] = 
% % \int_{\mathcal{X}} V(y,g(x)) ~ d\rho_{\mathcal{X}}(x),
% % \end{eqnarray}
% When we consider $g$ in the range of the operator $A$, i.e. $g=Af$, we retrieve the maximum likelihood formulation for inverse problems.
% where 
% whose maximization yields to the solution in eq. \cref{F_V} and \cref{ML} is obtained from \cref{R_definition} by considering $$
% \int_{\mathcal{Y}} V(Y,g(x))d\rho(Y|X=x)=-\log(p(x,g(x))) ~.$$
\end{remark}

The general discrete minimization problem \cref{general-discrete-problem} depends on the set of points $\mathcal Z_n$ but not on their statistical or deterministic origin, i.e. it does not depend on the specific choice of $\bar x$ and $\vartheta$.
For this reason, 
% From this point on, 
we use the same notation for the solutions of the discretized problems $\hat{g}^{(n)}_R$, $(\hat{f}^{(n)}_R)^{\dagger}$, $\hat{g}^{(n)}_{R,\lambda}$ and $\hat{f}^{(n)}_{R,\lambda}$ regardless the nature of samples $\mathcal{Z}_n$.
%, since the same functionals $R_{\mathcal{Z}_n}$ or $R_{\mathcal{Z}_n}+\lambda\psi(\Vert \cdot\Vert)$ are minimized.
% Therefore, the properties of the solutions of the discretized problems are valid for any choice of $\bar x$ and $\vartheta$.
In this respect, we conclude this subsection by giving the following
\begin{corollary}\label{corollary correspondence solution discretization}
%Let us suppose that $\mathcal{Z}_n$ 
%(?is such that $x_i \neq x_j$ for each $i \neq j$).
Given $\mathcal{Z}_n$ a set of samples,  
under \cref{assumption1} and by assuming that $\forall$ $x,x'\in\mathcal{X}$ $K(x,x')=<\phi_x,\phi_{x'}>_{\mathcal{H}_1}$
%condition $\mathcal{H}_K=\Im(A)$, 
%the solution $\hat{g}_{R,\lambda}^{(n)}$ and $\hat{f}_{R,\lambda}^{(n)}$ are related by means of the operator $A$ as described in theorem \cref{theorem methods}, i.e.
we have that
\begin{equation}\label{correspondence discretization}
(\hat{f}^{(n)}_R)^{\dagger}
= \tilde{A}^{-1} 
\hat{g}^{(n)}_R.
\qquad \text{and} \qquad 
\hat{f}^{(n)}_{R,\lambda}
= \tilde{A}^{-1} 
\hat{g}^{(n)}_{R,\lambda}
\end{equation}
%The link between solutions, given in theorem \cref{correspondence} applies also between solutions $(\hat{f}^{(n)}_R)^{\dagger}$ and $\hat{g}^{(n)}_R$. 
Furthermore, the solutions $\hat{g}^{(n)}_R$ and $\hat{g}^{(n)}_{R,\lambda}$  of problems \cref{hat g^(n)_R} and \cref{approx_probl_sampling_noisy} correspond to the set of solutions $\{U (\hat{f}^{(n)}_R)^{\dagger} ~|~ U\in\mathcal{U} \}$ and $\{U \hat{f}^{(n)}_{R,\lambda} ~|~ U\in\mathcal{U}\}$, respectively, where we remind that $\mathcal{U}$ is the set of unitary operators on $\mathcal{H}_1$.
% ''SABRY: (see section \cref{section Equivalence between problems} and\cref{section Equivalence between methods})''
\end{corollary}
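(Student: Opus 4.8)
The plan is to treat the empirical functional $R_{\mathcal{Z}_n}$ exactly as $R_y$ was treated in \cref{correspondence,theorem methods} and to read off the two identities in \cref{correspondence discretization} as direct instances of those theorems. First I would observe that the discrete approximation problems \cref{hat g^(n)_R,approx_probl_sampling_noisy} and the discrete inverse problems \cref{discretized,inv_probl_sampling_noisy} differ only in that the generic data fidelity $R_y$ is replaced by $R_{\mathcal{Z}_n}(g)=\frac{1}{n}\sum_{i=1}^n V(Y_i,g(x_i))$, the inverse version being obtained by composition with $A$, i.e. $R_{\mathcal{Z}_n}(Af)=\frac{1}{n}\sum_{i=1}^n V(y_i,(Af)(x_i))$. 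Since by \cref{RKHS} together with the hypothesis $K(x,x')=\langle\phi_x,\phi_{x'}\rangle_{\mathcal{H}_1}$ we still have the identification $\Im(A)=\mathcal{H}_K$ as RKHSs, the functional $g\mapsto R_{\mathcal{Z}_n}(g)$ on $\mathcal{H}_K$ and the functional $f\mapsto R_{\mathcal{Z}_n}(Af)$ on $\mathcal{H}_1$ stand in the very same relation exploited in the proofs of \cref{correspondence,theorem methods}.

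Then I would repeat the two-line argument of those proofs. For the unregularized case, set $\tilde f:=\tilde A^{-1}\hat g^{(n)}_R$; since $R_{\mathcal{Z}_n}(Af)\ge\min_{g\in\Im(A)}R_{\mathcal{Z}_n}(g)=R_{\mathcal{Z}_n}(\hat g^{(n)}_R)=R_{\mathcal{Z}_n}(A\tilde f)$ for every $f\in\mathcal{H}_1$, the element $\tilde f$ solves \cref{discretized}, and because $\tilde f\in Ker(A)^{\perp}$ by construction it is the minimum norm solution, giving $(\hat f^{(n)}_R)^{\dagger}=\tilde A^{-1}\hat g^{(n)}_R$. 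For the regularized case I would run the inequality chain of \cref{22} with $R_y$ replaced by $R_{\mathcal{Z}_n}$, using that $\tilde A$ is an isometry on $Ker(A)^{\perp}$ so that $\|\hat g^{(n)}_{R,\lambda}\|_{\mathcal{H}_K}=\|\tilde f\|_{\mathcal{H}_1}$, and conclude $\hat f^{(n)}_{R,\lambda}=\tilde A^{-1}\hat g^{(n)}_{R,\lambda}$.

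For the \emph{furthermore} assertion I would invoke the bijection $\mathcal{H}_1/\!\sim\ \longleftrightarrow\ \mathcal{H}_K$ summarized in the commutative diagram of \cref{equivalence-diagram}, now specialized to $R_{\mathcal{Z}_n}$. Concretely, for any unitary $U\in\mathcal{U}$ the feature map $U\phi$ produces the same kernel $K$ by \cref{K}, and the identity $(A_{U\phi}(Uf))(x)=\langle Uf,U\phi_x\rangle_{\mathcal{H}_1}=\langle f,\phi_x\rangle_{\mathcal{H}_1}=(A_{\phi}f)(x)$ shows that $U(\hat f^{(n)}_R)^{\dagger}$ and $U\hat f^{(n)}_{R,\lambda}$ are the minimum norm discrete solutions associated with the representer $U\phi$, all mapping to the same $\hat g^{(n)}_R$, respectively $\hat g^{(n)}_{R,\lambda}$. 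Hence the two orbits $\{U(\hat f^{(n)}_R)^{\dagger}\mid U\in\mathcal{U}\}$ and $\{U\hat f^{(n)}_{R,\lambda}\mid U\in\mathcal{U}\}$ are exactly the fibers over $\hat g^{(n)}_R$ and $\hat g^{(n)}_{R,\lambda}$ under this bijection.

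I expect the only delicate point to be hypothesis checking rather than the algebra: unlike a generic $R_y$, the empirical $R_{\mathcal{Z}_n}$ is neither strictly convex nor coercive on all of $\mathcal{H}_K$, since it probes $g$ only at the finitely many points $x_1,\dots,x_n$, so existence and uniqueness of $\hat g^{(n)}_R$ and of its minimum norm inverse counterpart cannot be inherited verbatim from the a)/b) conditions of \cref{Approximation problems in RKHS,sub:Linear inverse problems in Hilbert spaces}. The clean way around this is to note that the corollary presupposes the solutions defined in \cref{hat g^(n)_R,approx_probl_sampling_noisy,fdaggerRdiscretized,inv_probl_sampling_noisy} to exist, so that the correspondence argument needs only the minimization structure and the isometry of $\tilde A$, not the full a)/b) hypotheses; in the regularized case coercivity is in any event restored by the penalty $\lambda\psi(\|\cdot\|)$, which is precisely where uniqueness of $\hat g^{(n)}_{R,\lambda}$ genuinely originates.
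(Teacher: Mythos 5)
Your proposal is correct and follows exactly the route the paper intends: the paper omits the proof, stating only that it is a straightforward application of \cref{correspondence} and \cref{theorem methods}, and your argument is precisely that application (replace $R_y$ by $R_{\mathcal{Z}_n}$, rerun the inequality chains, and use the unitary-orbit bijection for the ``furthermore'' part). Your closing observation that $R_{\mathcal{Z}_n}$ fails the strict convexity and coercivity hypotheses a)/b) verbatim is a fair and worthwhile caveat that the paper itself does not address, but it does not change the substance of the argument.
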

This result is valid for any choice of $\bar x$ and $\vartheta$, i.e. independently of the discretization scheme.
%analogous to \cref{correspondence,} and \cref{theorem methods} for discrete problems.
The proof is omitted since it is a straightforward application of \cref{correspondence} and \cref{theorem methods}. 

\subsection{Convergence}\label{section Convergence}

In this paragraph we discuss the convergence of the empirical functional \cref{general-discrete-problem}
to the ideal one \cref{general-problem} and the convergence of their respective minimizers.
In the case $\mathcal Z_n$ is randomly drawn the convergence is defined in terms of probabilities and the conditions are well established \cite{vapnik2013nature,norkin2009convergence}.
However, if $\mathcal Z_n$ is assumed to be generated in a deterministic manner, the convergence is defined in terms of norms and the theoretical tools for proving the convergence are slightly different.
Indeed, whereas in the statistical framework convergence is a consequence of a straightforward application of 
% the it is sufficient to apply 
the argmax continuous theorem \cite{van1996weak}, we show that in the deterministic framework we need a result relying on the notion of $\Gamma$-convergence \cite{braides2006handbook}. 
%The link between these results can be understood according to the notions of epi-convergence and epi-convergence in distribution \cite{}.
% We start with a review of a known convergence result in the statistical setting.

\subsubsection{Statistical setting} 
\label{statistical setting}

We recall a classical theorem ensuring the consistency of a sequence of $\arg\max$-estimators in an {\it argmin} version suitable for our framework \cite{van1996weak}.
Let $(H,d)$ be a metric space and $(F_n)$ be a sequence of random functions over $H$ given a probability distribution $\nu$. 
\begin{theoremmm}\label{argmax theorem}
(Argmax continuous theorem).
Let us suppose
\begin{equation}
\label{hp1 argmax theorem}
\sup_{h\in H} |F_n (h)-F(h)|\to^{\mathbb{P}} 0 ~,
\end{equation}
where $F$ is a fixed function over $H$ and for each $\epsilon>0$
\begin{equation}\label{hp2 argmax theorem}
\inf_{h\in H : d( h , h^*) \ge\epsilon} F(h)>F(h^*),
\end{equation}
where $h^*$ is the minimizer of $F$. Moreover, if $F_n( h^{(n)} ) \le F_n (h^*) + o_{\mathbb{P}}(1)$, we have
\begin{eqnarray}\label{convergence g probability}
h^{(n)} \to^{\mathbb{P}} h^*
\end{eqnarray}where $h^{(n)}$ is the minimizer of $F_n$.
\end{theoremmm}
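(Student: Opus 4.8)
The plan is to prove convergence in two stages: first show that the objective values $F(h^{(n)})$ converge in probability to the minimum $F(h^*)$, and then invoke the well-separation hypothesis \cref{hp2 argmax theorem} to transfer this into convergence of the minimizers themselves in the metric $d$. This is the standard route for consistency of (approximate) M-estimators, and the separation condition is precisely the ingredient that converts closeness in objective value into closeness in $d$.

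For the first stage I would decompose the gap as
\[
F(h^{(n)}) - F(h^*) = \big[F(h^{(n)}) - F_n(h^{(n)})\big] + \big[F_n(h^{(n)}) - F_n(h^*)\big] + \big[F_n(h^*) - F(h^*)\big].
\]
The first and third brackets are each bounded in absolute value by $\sup_{h\in H}|F_n(h)-F(h)|$, which tends to zero in probability by \cref{hp1 argmax theorem}; the middle bracket is at most $o_{\mathbb{P}}(1)$ by the near-minimizer assumption $F_n(h^{(n)}) \le F_n(h^*)+o_{\mathbb{P}}(1)$. Hence $F(h^{(n)}) - F(h^*) \le o_{\mathbb{P}}(1)$. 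Since $h^*$ minimizes $F$, the same gap is deterministically nonnegative, and combining the one-sided upper bound with this lower bound gives $F(h^{(n)}) - F(h^*) \to^{\mathbb{P}} 0$.

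For the second stage, I would fix $\epsilon>0$ and set $\eta := \inf_{h : d(h,h^*)\ge\epsilon} F(h) - F(h^*)$, which is strictly positive by \cref{hp2 argmax theorem}. The key observation is that whenever $d(h^{(n)},h^*)\ge\epsilon$, the point $h^{(n)}$ lies in the set over which the infimum is taken, forcing $F(h^{(n)}) - F(h^*) \ge \eta$. Consequently the inclusion of events yields
\[
\mathbb{P}\big(d(h^{(n)},h^*)\ge\epsilon\big) \le \mathbb{P}\big(F(h^{(n)}) - F(h^*) \ge \eta\big),
\]
and the right-hand side tends to zero by the first stage. As $\epsilon>0$ was arbitrary, this is exactly the assertion $h^{(n)}\to^{\mathbb{P}} h^*$ in \cref{convergence g probability}.

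The argument is mostly routine bookkeeping, and I do not expect a genuine obstacle; the only point demanding care is the careful handling of the three ``in probability'' terms in the decomposition, ensuring that the one-sided stochastic bound $F(h^{(n)}) - F(h^*) \le o_{\mathbb{P}}(1)$ combined with the deterministic inequality $F(h^{(n)}) \ge F(h^*)$ genuinely produces two-sided convergence in probability. The structural insight that makes everything work is that the separation gap $\eta$ is uniform over the complement of the $\epsilon$-ball, so a single threshold controls the probability estimate.
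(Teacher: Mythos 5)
Your proof is correct and is exactly the classical argument for consistency of approximate M-estimators (the well-separated-minimum proof as in van der Vaart--Wellner, the reference the paper cites); the paper itself states this theorem without proof, so there is nothing to diverge from. The only cosmetic caveat is the usual measurability issue (the cited source works with outer probability for events involving $h^{(n)}$), which does not affect the substance of your argument.
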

Whereas 
the second hypothesis 
%involves no randomness but it 
is a property of the limit function $F$ at its minimum point $h^*$, which is assured
when $F$ is strictly convex, coercive and lower semi-continuous,
the first hypothesis \cref{hp1 argmax theorem} requires the uniform convergence of $(F_n)$.
%In the supervised learning case, 
When $F_n$ takes the form of the empirical risk (equation \cref{general-discrete-problem}) 
%(equation \cref{ERM-g}) 
and $F$ is given by equation \cref{general-problem}
%\cref{R_definition},  
% the first hypothesis is equivalent to the learnability requirement \cite{shalev2010learnability}. 
%In this case $V$ is usually assumed to be $\sigma$-admissible \cite{mukherjee2003regression}, convex and bounded \cite{rosasco_are_2004}.
% the idea is that a learnable problem is such that condition \cref{hp1 argmax theorem} holds true for each probability distribution $\nu$.
% In the supervised learning case, it has been proven that a sufficient and necessary condition for dealing with a learnable problem is when $F_n$ takes the form of the empirical risk (equation \cref{})  and $F$ is given by equation \cref{} and the uniform convergence of $F_n$ to $F$ holds.
% In the statistical learning literature 
%It is also well-known that a sufficient and necessary condition for the consistency of the empirical risk minimizer is that 
the condition \cref{hp1 argmax theorem} is satisfied if $H$ is a uniform Glivenko-Cantelli class (uGC), provided that $V$ has some Lipschitz property \cite{mukherjee2006learning}. %Under this assumption the convergence of minimizers is assured by \cref{argmax theorem}. 
Then, we have the following
\begin{corollary}\label{probability conv of g}
Let $\mathcal{H}_K$ be uGC. Let $R_{y}$ be defined in \cref{general-problem} and let $V$ be a loss function as in \cref{discrete-data} with the additional Lipschitz property described in \cite{mukherjee2006learning}. Assume that $V$ satisfies the following coercivity property: for each sequence $(g_k)\subseteq\mathcal{H}_K$ such that $\Vert g_k\Vert_{\mathcal{H}_K}\to\infty$, as $k\to\infty$ then $V(Y,g_k(X))\to\infty$, as $k\to\infty$, for each $Y\in\mathcal{Y}$ and $X\in\mathcal{X}$.
% Consider $\hat{g}^{(n)}_R$, $g_R$, $(\hat{f}^{(n)}_R)^{\dagger}$, $f^{\dagger}_R$, defined in equations (\cref{hat g^(n)_R}), %(\cref{approx_problem}), 
% (\cref{approx_probl_sampling_noisy}),(\cref{approx_probl_reg}), (\cref{fdaggerRdiscretized}), (\cref{fdaggerR}), %(\cref{inv_probl_sampling_noisy}) and (\cref{inv_probl_reg}), 
% respectively. 
Then as $n\to+\infty$,
\begin{equation}
\label{convergence f probability}
\quad\hat{g}^{(n)}_R \longrightarrow^{\mathbb{P}} g_{R_{y}}  \quad \text{and} \quad
%\hat{g}^{(n)}_{R,\lambda} \longrightarrow^{\mathbb{P}} g_{R,\lambda}, \quad
(\hat{f}^{(n)}_R)^{\dagger} \longrightarrow^{\mathbb{P}} f^{\dagger}_{R_{y}}, 
%\hat{f}^{(n)}_{R,\lambda} \longrightarrow^{\mathbb{P}} f_{R,\lambda}.
\end{equation}
where $\hat{g}^{(n)}_R$ is defined in equation \cref{hat g^(n)_R}, $g_{R_y}$ % {\rho}
is the minimizer of $R_{y}$ over $\mathcal{H}_K$, $(\hat{f}^{(n)}_R)^{\dagger}$ is defined in equation \cref{fdaggerRdiscretized} and $f^{\dagger}_{R_{y}}$ is the $R_{y}$-generalized solution in according to the definition in \cref{fdaggerR}, %(\cref{inv_probl_sampling_noisy}) and (\cref{inv_probl_reg}), 
respectively. 
% \begin{equation}
% \label{convergence f probability}
% \end{equation}
% as $n\to+\infty$.
\end{corollary}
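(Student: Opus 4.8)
The plan is to apply the argmax continuous theorem (\cref{argmax theorem}) with the identifications $H=\mathcal{H}_K$ equipped with its norm metric, $F_n=R_{\mathcal{Z}_n}$ the empirical functional \cref{general-discrete-problem}, $F=R_y$ the ideal functional \cref{general-problem}, limiting minimizer $h^*=g_{R_y}$ and empirical minimizer $h^{(n)}=\hat{g}^{(n)}_R$. Once $\hat{g}^{(n)}_R\to^{\mathbb{P}} g_{R_y}$ is established, the convergence of the inverse-problem solutions will follow from the correspondence $(\hat{f}^{(n)}_R)^{\dagger}=\tilde{A}^{-1}\hat{g}^{(n)}_R$ and $f^{\dagger}_{R_y}=\tilde{A}^{-1}g_{R_y}$ together with continuity of $\tilde{A}^{-1}$, so the whole argument reduces to verifying the three hypotheses of \cref{argmax theorem} for the kernel solution and then transporting the result through $\tilde{A}^{-1}$.

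First I would dispatch the two easy hypotheses. The near-optimality condition $F_n(h^{(n)})\le F_n(h^*)+o_{\mathbb{P}}(1)$ holds trivially with zero remainder, since $\hat{g}^{(n)}_R$ is by definition an exact minimizer of $R_{\mathcal{Z}_n}$. The uniform convergence \cref{hp1 argmax theorem} is precisely the statement that $\mathcal{H}_K$ is a uniform Glivenko--Cantelli class for the loss $V$; under the Lipschitz property assumed in the hypotheses and borrowed from \cite{mukherjee2006learning}, this is available and I would cite it rather than reprove it.

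The remaining and most delicate hypothesis is the well-separated minimum condition \cref{hp2 argmax theorem}, which by the remark following \cref{argmax theorem} is guaranteed once $R_y$ is lower semicontinuous, strictly convex and coercive on $(\mathcal{H}_K,\|\cdot\|_{\mathcal{H}_K})$. Here I would transfer the pointwise hypotheses on $V$ to the integral functional $R_y$. Lower semicontinuity follows from Fatou's lemma: if $g_k\to g$ in $\mathcal{H}_K$ then $g_k(X)\to g(X)$ for every $X$ by continuity of the evaluation functionals of the RKHS, and lower semicontinuity of $V(Y,\cdot)$ together with $V\ge 0$ gives $\liminf_k R_y(g_k)\ge R_y(g)$. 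Strict convexity descends from strict convexity of each $V(Y,\cdot)$ after integration, using that two distinct elements of $\mathcal{H}_K$ differ on a set of positive measure. Coercivity of $R_y$ is exactly where the extra coercivity assumption on $V$ is used: whenever $\|g_k\|_{\mathcal{H}_K}\to\infty$ the integrand $V(Y,g_k(X))$ diverges, and hence $R_y(g_k)\to\infty$. I expect this last transfer of coercivity (and, to a lesser extent, strict convexity) from the pointwise loss to the integrated functional in the $\mathcal{H}_K$-topology to be the main obstacle; it is precisely the reason the corollary carries the additional coercivity hypothesis rather than relying only on the bare loss-function assumptions of \cref{discrete-data}.

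Having verified all three hypotheses, \cref{argmax theorem} yields $\hat{g}^{(n)}_R\to^{\mathbb{P}} g_{R_y}$. For the second convergence I would invoke \cref{corollary correspondence solution discretization}, which gives $(\hat{f}^{(n)}_R)^{\dagger}=\tilde{A}^{-1}\hat{g}^{(n)}_R$, and \cref{correspondence}, which gives $f^{\dagger}_{R_y}=\tilde{A}^{-1}g_{R_y}$. By the construction of the $\mathcal{H}_K$-norm in \cref{RKHS}, $\tilde{A}$ is an isometry onto $\Im(A)=\mathcal{H}_K$, so $\tilde{A}^{-1}$ is itself an isometry and in particular continuous; the continuous mapping theorem then transports the convergence in probability through $\tilde{A}^{-1}$, giving $(\hat{f}^{(n)}_R)^{\dagger}\to^{\mathbb{P}} f^{\dagger}_{R_y}$ and completing the proof.
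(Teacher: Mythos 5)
Your proposal follows essentially the same route as the paper: both invoke the argmax continuous theorem with $F_n=R_{\mathcal{Z}_n}$, $F=R_y$, verify \cref{hp1 argmax theorem} via the uGC hypothesis, verify \cref{hp2 argmax theorem} through the properties of $R_y$ at its unique minimizer, note the near-optimality condition holds exactly, and then transfer the result to $(\hat f^{(n)}_R)^\dagger$ via the isometry $\|\hat g^{(n)}_R-g_{R_y}\|_{\mathcal{H}_K}=\|(\hat f^{(n)}_R)^\dagger-f^\dagger_{R_y}\|_{\mathcal{H}_1}$. You simply spell out in more detail (Fatou for lower semicontinuity, pointwise-to-integral transfer of coercivity and strict convexity) what the paper compresses into a one-line appeal to uniqueness of the minimizer.
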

\begin{proof}
% In the statistical learning literature the convergence problem of the empirical risk has been studied ... CONVERGENCE RATES.
% (CITARE are loss functions are the same?, ipotesi bound risultato di convergenza loss ed escludere KL, vedere se possiamo togliere un ipotesi sulla loss per dare questo risultato. Mettere lemma e dimostrare nel caso generale)\\
Let us take $F_n :=R_{\mathcal{Z}_n}$ (where $R_{\mathcal{Z}_n}$ is defined in \cref{general-discrete-problem})
and $F:=R_{y}$ in \cref{argmax theorem}. 
Condition \cref{hp1 argmax theorem} is verified for the uGC hypothesis on $\mathcal{H}_K$. Condition \cref{hp2 argmax theorem} is verified thanks to the hypothesis of uniqueness of the minimizer of $R_{y}$. Moreover, the sequence $\hat{g}_R^{(n)}$ satisfies $R_{\mathcal{Z}_n}(\hat{g}_R^{(n)})\le R_{\mathcal{Z}_n}(g_{R_{y}})+o_{\mathbb{P}}(1)$ as  $\hat{g}_R^{(n)}$ is the minimizer of $R_{\mathcal{Z}_n}$.
Using the equivalence of learning and inverse problems, we have the following equalities 
\begin{equation}
\label{norm equality}
\| \hat{g}^{(n)}_{R}-g_{R_{y}}\|_{\mathcal{H}_K}=\| A(\hat{f}^{(n)}_R)^{\dagger}-Af^{\dagger}_{R_{y}}\|_{\mathcal{H}_K}=\| (\hat{f}^{(n)}_R)^{\dagger}-f^{\dagger}_{R_{y}}\|_{\mathcal{H}_1} ~.
\end{equation}
This completes the proof.
\end{proof}
% \begin{remark}\label{V hypotheses}
% In order to have the uniform Glivenko Cantelli property on $\mathcal{H}_K$ we can require the Lipschitz property described in \cite{mukherjee2003regression}  The existence and uniqueness of $g_{R_{y}}$ and $\hat{g}^{(n)}_{R_{y}}$ are satisfied by adding the hypothesis that $V$ 
% %is strictly convex $\forall$ $Y\in\mathcal{Y}$ and it 
% has the following coercivity property:  (NOTA:DISCUTERE INSIEME) Similar assumptions have to be considered to have existence and uniqueness of the minimizers $f^{\dagger}_{R_{y}}$ and $(\hat{f}^{(n)}_R)^{\dagger}$.
% \end{remark} 
\begin{remark}
The same convergence result of \cref{probability conv of g} applies for
Tikhonov type regularized solutions, i.e. fixed $\lambda>0$ we have that
$\hat{g}^{(n)}_{R,\lambda}$ and $\hat{f}^{(n)}_{R,\lambda}$ 
%defined in equations \cref{approx_probl_sampling_noisy} and  \cref{inv_probl_sampling_noisy}, 
converge in probability to $\hat{g}_{R_{y},\lambda}$, and $\hat{f}_{R_{y},\lambda}$, 
%defined in equations \cref{approx_probl_reg} and \cref{inv_probl_reg}
respectively.
% and
% \begin{equation}
% \label{norm equality lambda}
% \| \hat{g}^{(n)}_{R,\lambda}-g_{R,\lambda}\|_{\mathcal{H}_K}=\| A\hat{f}^{(n)}_{R,\lambda}-Af_{R,\lambda}\|_{\mathcal{H}_K}=\| \hat{f}^{(n)}_{R,\lambda}-f_{R,\lambda}\|_{\mathcal{H}_1}.
% \end{equation}
\end{remark}
% \begin{corollary}\label{probability conv of g}
% Consider the problem \cref{approx_probl_sampling_noisy} with $R_{\mathcal{Z}_n}$ and $R$ defined by \cref{ERM-g} and \cref{R_definition}. We have that
% \begin{eqnarray}
% \label{convergence g_lambda probability}
% \hat{g}^{(n)}_{R,\lambda} \longrightarrow^{\mathbb{P}} g_{R,\lambda}.
% \end{eqnarray}
% \end{corollary}
% \begin{proof}
% It is analogous to the one of the previous corollary considering $F_n :=R_{\mathcal{Z}_n} + \lambda \psi(\|\cdot\|_K)$ and $F:=R + \lambda \psi(\|\cdot\|_K)$ .
% \end{proof}
%
%It is well-known that a consistent and necessary condition for the consistency of ERM is that $\mathcal{H}_K$ is uniform Glivenko-Cantelli class [ ], which it means that the first hypothesis applies for each probability distribution $\nu$. 
%However it can be interesting (mettere luce) che con lo strumento dell'argmax theorem abbiamo un risultato di convergenza in probabilita' proprio del minimizzatore $\hat{g}^{(n)}_R$ al minimizzatore 'atteso' $g_R$ nella formulazione infinito dimensionale. \\ 
%Under the same assumption of Corollary \cref{probability conv of g} and the hypothesis for the equivalence of learning and inverse problems,  the result in \cref{probability conv of g} can be trivially translated in a result in inverse learning and inverse problems in the semi-stochastic setting as follows
% \begin{corollary}
% \label{probability conv of f}

% \end{corollary}

\ifdefined\INVERTIBILITY
It is worth observing that the results in eq. (\cref{convergence g probability}) and (\cref{convergence f probability}) are convergence results in probability of the solution of the discretized problem to the solution of the corresponded infinite dimensional problem. The convergence result in eq. (\cref{convergence g probability}) holds under the learnability condition. Therefore it could be interesting translating such concept in the inverse problems setting as an 'invertibility'/'approximatibility' concept. The more recent results has been established a characterization of the concept of learnability through the concept of stability of a learning algorithm [ ]. Stability is used as an alternative to design consistent learning algorithms and it is a concept that has a crucial role in the theory of regularization of ill-posed problem. In fact such concept has emerged in learning to be used to interpret supervised learning as an ill-posed inverse problem.
In this work we only suggest a concept of 'invertibility'/'approximatibility' in a deterministic setting in order to have the convergence of the solution of a discretized inverse problem to the solution of the associated inverse problem in the infinite dimensional setting (presented in the section ..). 
%so that consistency of an algorithm used to find the solution of an inverse problem is assured in analogy to the concept of 'learnability' for a learning algorithm. 
We give the concept 'invertibility'/'approximatibility' using the notion of $\Gamma$-convergence and we obtain a convergence result by using the fundamental theorem of $\Gamma$-convergence as alternative of the argmax theorem. 
%''SABRY: cambierei il discorso ...'On convergence of kernel learning estimators'
%We present in the following section.
\fi

\subsubsection{Deterministic setting}
\label{deterministic setting}

% The problem of interpolation is presented both in a statistical setting ... but it can be seen also from a deterministic point of view, where the points $x_1,\dots,x_n$ can be fixed and $y_1,\dots,y_n$ are noisy with a noise level $\delta>0$. 
% We have seen that the convergence of the ERM is a consequence of the argmax continuous theorem.
%Moreover, it can be interesting to review a convergence result of minima and minimizers 
The convergence in the deterministic case needs
%without the use of the convergence in probability but 
the use of the fundamental theorem of $\Gamma$-convergence \cite{braides2006handbook}. First, we recall the $\Gamma$-convergence definition for a given sequence $(F_n)$ of functions on a metric space $(H,d)$ with respect to the distance $d$. 
\begin{definition}
The sequence $(F_{n})$ $\Gamma$-converges in $H$ to a fixed function $F$ if for all $h\in H$ the {\it lim inf inequality} holds, i.e.
% \begin{itemize}
% \item[i)] 
%{\it lim inf inequality}. 
for all sequence $h_n$ such that $d(h_n,h)\to 0$, as $n\to+\infty$
\begin{equation}
F(h)\le \lim\inf_n F_{n}(h_n)
\end{equation}
%\item[ii)]
%{\it lim sup inequality}.
and the {\it lim sup inequality} holds, i.e.
there exists a sequence $h_{n}$ such that $d(h_n,h)\to 0$, as $n\to+\infty$ such that
\begin{eqnarray}
F(h)\ge \lim\sup_n F_{n}(h_n).
\end{eqnarray}
%\end{itemize}
% $F$ is the $\Gamma$-limit of $(F_n)_n$. 
\end{definition}
In order to prove the $\Gamma$-convergence of a sequence we use the following characterization of the equi-coerciveness of a sequence \cite{maso_introduction_1993}. 
%$(F_n)_n$ can be characterized by the following condition .
\begin{lemma}
$(F_n)$ is an equi-coercive sequence $\iff$
there exists a lower semicontinuous coercive function $G$ such that $F_n\ge G$ on $H$, for each $n\in\mathbb{N}$. 
\end{lemma}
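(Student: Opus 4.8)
The plan is to prove the two implications separately, working from the standard notions of \cite{maso_introduction_1993}: $(F_n)$ is \emph{equi-coercive} means that for every $t\in\mathbb{R}$ there is a compact set $K_t\subseteq H$ with $\{F_n\le t\}\subseteq K_t$ for all $n$, while a single function $G$ is \emph{coercive} when each sublevel set $\{G\le t\}$ is precompact; if $G$ is moreover lower semicontinuous then $\{G\le t\}$ is closed and precompact, hence compact. With these conventions the equivalence becomes a statement about comparing common sublevel sets.

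The reverse implication is the routine one. Assuming a lower semicontinuous coercive $G$ with $F_n\ge G$ for all $n$, I would fix $t$ and note that $\{F_n\le t\}\subseteq\{G\le t\}$ for every $n$; since $\{G\le t\}$ is compact, taking $K_t:=\{G\le t\}$ exhibits the common compact sublevel set required by equi-coerciveness.

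For the forward implication I would begin with the natural candidate $\hat G:=\inf_n F_n$, which satisfies $\hat G\le F_n$ for every $n$ by construction. First I would verify that $\hat G$ is coercive: since $\hat G(x)\le t$ forces $F_n(x)\le t+1$ for some $n$, we get $\{\hat G\le t\}\subseteq\bigcup_n\{F_n\le t+1\}\subseteq K_{t+1}$, so each sublevel set is precompact. The obstruction is that $\hat G$, being an infimum of functions that are at best lower semicontinuous, need not itself be lower semicontinuous, so it cannot play the role of $G$ directly. I would therefore pass to its lower semicontinuous envelope $G:=\mathrm{sc}^-\hat G$, the greatest lower semicontinuous function below $\hat G$; this is lower semicontinuous by construction and still obeys $G\le\hat G\le F_n$ for all $n$.

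The hard part will be showing that coercivity is preserved under this relaxation. Using the metric-space characterization $G(x)=\inf\{\liminf_k\hat G(x_k):x_k\to x\}$, I would argue that $G(x)\le t$ produces a sequence $x_k\to x$ with $\liminf_k\hat G(x_k)\le t$, hence a subsequence lying in $\{\hat G\le t+\epsilon\}$ for each $\epsilon>0$, so $x\in\overline{\{\hat G\le t+\epsilon\}}$. This yields
\[
\{G\le t\}\subseteq\bigcap_{\epsilon>0}\overline{\{\hat G\le t+\epsilon\}}\subseteq\overline{\{\hat G\le t+1\}}\subseteq K_{t+2},
\]
where the last inclusion uses the coercivity of $\hat G$ together with the compactness (hence closedness) of $K_{t+2}$. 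Thus $\{G\le t\}$ is precompact, so $G$ is a lower semicontinuous coercive function lying below every $F_n$, which closes the equivalence.
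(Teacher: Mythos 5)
Your proof is correct. The paper does not prove this lemma at all --- it is quoted from the cited reference on $\Gamma$-convergence --- and your argument (the easy direction via $\{F_n\le t\}\subseteq\{G\le t\}$, and the forward direction via the lower semicontinuous envelope of $\inf_n F_n$, with coercivity preserved through the inclusion $\{G\le t\}\subseteq\overline{\{\hat G\le t+1\}}\subseteq K_{t+2}$) is essentially the standard proof from that reference, carried out correctly in the metric-space setting the paper uses.
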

We also exploit the following result which is a consequence of the fundamental theorem of $\Gamma$-convergence (see \cite{braides_local_2012} for details).
%(?Let $(H,d)$ be a metric space respect to the distance $d$ and let $\{F_n\}$ be a sequence of function on $H$.)
\begin{proposition}\label{gamma conv theorem}
%[The fundamental theorem of $\Gamma$-convergence]
Let $(F_n)$ be an equi-coercive sequence $\Gamma$-converging to $F$. Let $h_n$ be a minimizer of $F_n$, and we assume $F$ admits a unique point of minimum $h$. Then $h_n\to h$, as $n\to+\infty$, i.e. $d(h_n,h)\to 0$, as $n\to+\infty$.
\end{proposition}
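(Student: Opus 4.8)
The plan is to combine the equi-coercivity characterization from the preceding Lemma with the two inequalities defining $\Gamma$-convergence, reproducing the classical scheme behind the fundamental theorem of $\Gamma$-convergence. The argument proceeds in three stages: first I bound the minimal values $F_n(h_n)$ from above, then I deduce that $(h_n)$ is precompact, and finally I identify every limit point of $(h_n)$ with the unique minimizer $h$ of $F$.

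First I would control the minimal values by means of the lim sup inequality. Applied to the minimizer $h$ of $F$, $\Gamma$-convergence furnishes a recovery sequence, that is a sequence $(h'_n)$ with $d(h'_n,h)\to 0$ and $\limsup_n F_n(h'_n)\le F(h)$. Since $h_n$ minimizes $F_n$, one has $F_n(h_n)\le F_n(h'_n)$ for every $n$, so that $\limsup_n F_n(h_n)\le F(h)<+\infty$; in particular $F_n(h_n)\le C$ for some constant $C$ and all large $n$.

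Next I would invoke the equi-coercivity of $(F_n)$. By the preceding Lemma there exists a lower semicontinuous coercive function $G$ with $F_n\ge G$ on $H$ for all $n$, whence $G(h_n)\le F_n(h_n)\le C$ eventually. Coercivity of $G$ means that the sublevel set $\{G\le C\}$ is precompact, so the tail of $(h_n)$ lies in a precompact set and $(h_n)$ admits convergent subsequences.

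Finally I would identify the limits. Let $h_{n_k}\to\bar h$ be any convergent subsequence. The lim inf inequality gives $F(\bar h)\le\liminf_k F_{n_k}(h_{n_k})\le\limsup_n F_n(h_n)\le F(h)$, so $\bar h$ realizes the minimum of $F$; by uniqueness $\bar h=h$. Hence every convergent subsequence of the precompact sequence $(h_n)$ has the same limit $h$, which forces $d(h_n,h)\to 0$ for the whole sequence. The main obstacle is precisely the precompactness step: the entire argument hinges on turning the upper bound $F_n(h_n)\le C$ into compactness of the minimizing sequence, and this conversion is exactly what equi-coercivity, through the coercive minorant $G$, is designed to provide. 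Without it the limit points $\bar h$ need not exist and the lim inf inequality could not be brought to bear.
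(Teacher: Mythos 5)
Your argument is the standard proof of the fundamental theorem of $\Gamma$-convergence, and it is correct: the recovery sequence at $h$ bounds $\limsup_n F_n(h_n)$ by $F(h)$, the coercive minorant $G$ from the characterization lemma confines the tail of $(h_n)$ to a precompact sublevel set, and the liminf inequality plus uniqueness of the minimizer identifies every subsequential limit with $h$. The paper itself supplies no proof here --- it states the proposition as a known consequence of the fundamental theorem of $\Gamma$-convergence and cites the literature --- so there is nothing to diverge from; the only point worth making explicit in your write-up is that the liminf inequality is applied along a subsequence $(n_k)$, which is legitimate either because subsequences of $\Gamma$-convergent sequences $\Gamma$-converge to the same limit, or by extending $h_{n_k}$ to a full sequence converging to $\bar h$ and noting that the liminf over the full sequence is no larger than the liminf over the subsequence.
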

%Some comments on the hypotheses. The $\Gamma$-convergence of the sequence $(F_n)_n$ to $F$ requires the following conditions to apply.

We now prove the convergence of the minimizer of $R_{\mathcal{Z}_n}$ to the one of $R_y$ over $\mathcal{H}_K$, where $R_y$ is defined in equation \cref{R} and $V$ is strictly convex, Lipschitz continuous with respect to the second variable and coercive in the sense of the definition given in \cref{probability conv of g}. 
%and  finite dimensional problem to the solution of the infinite dimensional one in the case $F$ is given by equation \cref{R}, with $V$ coercive (as the definition given in \cref{probability conv of g}) 
%\cref{V hypotheses}) 
%$\sigma$-admissible function \cite{girosi1995approximation,rosasco_are_2004}.
\begin{proposition}\label{lem gamma-conv}
Let $x_1,\dots,x_n\in\mathcal{X}$ such that the sequence of points $(x_n)$ is dense in $\mathcal{X}$. 
Let $R_{\mathcal{Z}_n}$ be defined in equation \cref{R_n} and $R_y$ be defined in equation \cref{R}, with $V$ a Lipschitz continuous with respect to the second variable and coercive 
%$\sigma$-admissible 
function.
Then the sequence $(R_{\mathcal{Z}_n})$ is an equi-coercive sequence and it $\Gamma$-converges to $R_y$.
\end{proposition}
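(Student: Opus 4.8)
The plan is to verify the two assertions (equi-coerciveness and $\Gamma$-convergence) separately, working in $H=\mathcal{H}_K$ with the metric induced by $\|\cdot\|_{\mathcal{H}_K}$, and then to feed the conclusion into \cref{gamma conv theorem}. The backbone of the argument is the elementary but decisive fact that, since $\mathcal{H}_K$ is a reproducing kernel Hilbert space with $K(x,x)\le c^2$, every evaluation is controlled by the norm: $|g(x)-h(x)|=|\langle g-h,K_x\rangle_{\mathcal{H}_K}|\le c\,\|g-h\|_{\mathcal{H}_K}$. Combining this with the Lipschitz continuity of $V$ in its second variable (say with constant $L$) gives, for every $n$,
\begin{equation*}
|R_{\mathcal{Z}_n}(g)-R_{\mathcal{Z}_n}(h)|\le \frac{1}{n}\sum_{i=1}^n L\,|g(x_i)-h(x_i)|\le Lc\,\|g-h\|_{\mathcal{H}_K},
\end{equation*}
so that the family $(R_{\mathcal{Z}_n})$ defined in \cref{R_n} is equi-Lipschitz.

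Next I would establish the pointwise convergence $R_{\mathcal{Z}_n}(g)\to R_y(g)$ for each fixed $g\in\mathcal{H}_K$, with $R_y$ as in \cref{R}. Writing $\varphi_g(x):=V(y(x),g(x))$, this amounts to showing that the empirical averages $\frac1n\sum_{i=1}^n\varphi_g(x_i)$ converge to $\int_{\mathcal X}\varphi_g(x)\,dx$. The integrand $\varphi_g$ is continuous and bounded on the compact set $\mathcal X$ (by the regularity of $y$ and $g$ and the continuity of $V$), so the convergence reduces to the weak convergence of the empirical measures $\frac1n\sum_{i=1}^n\delta_{x_i}$ to the reference measure $dx$, which is precisely where the density of $(x_n)$ in $\mathcal X$ enters. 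Equi-Lipschitzness together with this pointwise limit yields \emph{continuous convergence}: if $\|g_n-g\|_{\mathcal{H}_K}\to0$ then
\begin{equation*}
|R_{\mathcal{Z}_n}(g_n)-R_y(g)|\le |R_{\mathcal{Z}_n}(g_n)-R_{\mathcal{Z}_n}(g)|+|R_{\mathcal{Z}_n}(g)-R_y(g)|\le Lc\,\|g_n-g\|_{\mathcal{H}_K}+|R_{\mathcal{Z}_n}(g)-R_y(g)|\longrightarrow 0.
\end{equation*}
Continuous convergence delivers at once the \emph{lim inf} inequality (apply the bound to any $g_n\to g$) and the \emph{lim sup} inequality (take the constant recovery sequence $g_n\equiv g$), i.e. the $\Gamma$-convergence of $(R_{\mathcal{Z}_n})$ to $R_y$.

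For equi-coerciveness I would invoke the stated characterization and exhibit a single lower semicontinuous coercive $G$ with $R_{\mathcal{Z}_n}\ge G$ for all $n$. The coercivity of $V$ (in the sense of \cref{probability conv of g}) makes the limit functional $R_y=\int_{\mathcal X}V(y(\cdot),\cdot)\,dx$ coercive; moreover $V\ge 0$ forces $R_{\mathcal{Z}_n}\ge 0$, and the equi-Lipschitz bound together with the pointwise convergence controls $R_{\mathcal{Z}_n}$ uniformly on bounded sets. One can then dominate the whole sequence from below by a coercive minorant, handling the finitely many small indices $n$ individually (each $R_{\mathcal{Z}_n}$ being coercive via the coercivity of $V$) and the tail by $R_y$ up to a fixed additive constant. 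With equi-coerciveness and $\Gamma$-convergence in hand, \cref{gamma conv theorem} yields the convergence of the minimizers, which is the ultimate aim of the statement.

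I expect the genuine difficulty to lie in the pointwise step $\frac1n\sum_{i=1}^n\varphi_g(x_i)\to\int_{\mathcal X}\varphi_g\,dx$: density of $(x_n)$ guarantees that the sample points fill $\mathcal X$ but, by itself, does not force the empirical measures to equidistribute toward $dx$, so this is the point at which the hypotheses must be used most carefully (or tacitly strengthened to an equidistribution condition). The construction of the uniform coercive minorant is the secondary technical obstacle, the subtlety being the averaging factor $1/n$, which prevents one from simply bounding $R_{\mathcal{Z}_n}$ below by a single coercive summand.
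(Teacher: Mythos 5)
Your $\Gamma$-convergence argument is exactly the paper's: the same triangle-inequality split $|R_{\mathcal{Z}_n}(g_n)-R_y(g)|\le|R_{\mathcal{Z}_n}(g)-R_y(g)|+|R_{\mathcal{Z}_n}(g_n)-R_{\mathcal{Z}_n}(g)|$, the same equi-Lipschitz estimate $|R_{\mathcal{Z}_n}(g_n)-R_{\mathcal{Z}_n}(g)|\le Lc\|g_n-g\|_{\mathcal{H}_K}$ via the reproducing property and $K(x,x)\le c^2$, and the same appeal to Riemann sums and density for the pointwise term; the paper, like you, concludes from continuous convergence (it exhibits the limit along every converging sequence, which subsumes both the liminf and limsup inequalities). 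Your caveat about the pointwise step is well placed: the paper disposes of $|R_{\mathcal{Z}_n}(g)-R_y(g)|\to 0$ by invoking ``the definition of the Riemann integral and the density of the points,'' but density of the sequence $(x_n)$ in $\mathcal{X}$ does not by itself force the empirical measures $\frac1n\sum_i\delta_{x_i}$ to equidistribute toward $dx$ (the first $n$ points need not form a partition with vanishing mesh and equal weights), so an equidistribution-type hypothesis is indeed being used tacitly. This is a gap in the statement's hypotheses, not in your reasoning.

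Where you genuinely diverge is equi-coercivity, and there your instinct is sharper than the paper's proof but your replacement is incomplete. The paper argues in one line that $R_{\mathcal{Z}_n}\ge R_{\mathcal{Z}_1}$ with $R_{\mathcal{Z}_1}(g)=V(y_1,g(x_1))$ coercive and continuous; as you observe, the averaging factor only gives $R_{\mathcal{Z}_n}(g)\ge\frac1n V(y_1,g(x_1))$, and $\inf_n \frac1n V(y_1,g(x_1))$ is not a coercive minorant, so the paper's inequality is false as written. However, your proposed repair --- bounding the tail below by $R_y$ minus a fixed constant --- is only sketched, and the uniform-on-bounded-sets control you invoke does not follow from equi-Lipschitzness plus pointwise convergence in an infinite-dimensional $\mathcal{H}_K$ (that combination gives uniform convergence only on totally bounded sets), whereas a minorant of the form $R_y-C$ valid for all large $n$ would require a bound on $\sup_{g}|R_{\mathcal{Z}_n}(g)-R_y(g)|$ over all of $\mathcal{H}_K$ or at least over arbitrary sublevel sets. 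So the equi-coercivity claim remains unproved in both your write-up and the paper's; naming the $1/n$ obstruction concretely is the right first step, but the construction of the uniform coercive minorant still has to be carried out (or the coercivity hypothesis on $V$ strengthened so that it survives the $1/n$ averaging).
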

\begin{proof}
To prove the equi-coerciveness of the sequence $(R_{\mathcal{Z}_n})$, it is sufficient to observe that $R_{\mathcal{Z}_n}\ge R_{\mathcal{Z}_1}$ for all $n\in\mathbb{N}$ where $R_{\mathcal{Z}_1}(g)=V(y_1,g(x_1))$ and then $R_{\mathcal{Z}_1}$ is coercive and continuous for the hypothesis on $V$.
Now we prove that $(R_{\mathcal{Z}_n})$ $\Gamma$-converges to $R_y$. 
Without loss of generality we assume $\mathcal{X} = [0,1]^d$.
Let $g\in\mathcal{H}_K$ and let $(g_n)$ be a sequence converges to $g$, i.e. $\| g_n-g\|_{\mathcal{H}_K}\to 0$, then we have the following inequality
\begin{equation}
\label{dis}
\left|R_{\mathcal{Z}_n}(g_n)-R_y(g)\right|\le\left|R_{\mathcal{Z}_n}(g)-R_y(g)\right|+\left|R_{\mathcal{Z}_n}(g_n)-R_{\mathcal{Z}_n}(g)\right|.
\end{equation}
The first term in the r.h.s in eq. \cref{dis} converges to 0 as $n\to+\infty$ for the definition of the Riemann integral and for the density of the points $x_i$ in $\mathcal{X}$.
Now we prove that the second term in the r.h.s in eq. \cref{dis} converges to 0.
Under the \cref{assumption1} we have that $\| K_{x_i}\|_{\mathcal{H}_K}\le c,$ $\forall$ $x_i$, where $c$ is a fixed constant. By using the Lipschitz continuity %$\sigma$-admissibility 
of $V$ and the reproducing property of $K$ we have the following inequalities
\begin{eqnarray}
\left|R_{\mathcal{Z}_n}(g_n)-R_{\mathcal{Z}_n}(g)\right| & \le &\frac{1}{n}\sum_{i=1}^{n} \left|V(g_n(x_i),y_i)-V(g(x_i),y_i)\right| \\ \nonumber
&\le &\frac{1}{n}\sum_{i=1}^{n} \sigma |g_n(x_i)-g(x_i)| 
% \\ \nonumber
%\frac{1}{n}\sum_{i=1}^{n} L |<g_n-g,K_{x_i}>_{\mathcal{H}_K}|\le \\
% & \le & \frac{1}{n}\sum_{i=1}^{n} \sigma\| g_n-g\|_{\mathcal{H}_K}\| K_{x_i}\|_{\mathcal{H}_K} \\ \nonumber
\le c \sigma \| g_n-g\|_{\mathcal{H}_K},
\end{eqnarray}
where $\sigma$ is the Lipschitz constant of $V$.
Therefore, for each sequence $(g_n)_n$ converging to $g$ there exists $\lim_{n\to+\infty} R_{\mathcal{Z}_n}(g_n)=R(g)$. Then $(R_{\mathcal{Z}_n})$ $\Gamma$-converges to $R$.
\end{proof}
% \begin{lemma}\label{lem gamma-conv lambda}
% Consider the same assumptions of \cref{lem gamma-conv}. Given $\lambda>0$ and $\psi$ a continuous function, then the sequence $(R_{\mathcal{Z}_n}+\lambda \psi(\|\cdot\|_{\mathcal{H}_K}))_n$ is equi-coercive and it $\Gamma$-converges to $R+\lambda\psi(\|\cdot\|_{\mathcal{H}_K})$.
% \end{lemma}
% \begin{proof}
% In order to prove the equi-coerciveness it is sufficient to note that $R_{\mathcal{Z}_n}+\lambda\psi(\|\cdot\|_{\mathcal{H}_K})\ge R_{\mathcal{Z}_1}$ for all $n\in\mathbb{N}$, where $R_{\mathcal{Z}_1}$ is coercive and continuous for hypothesis on $V$. 
% The sequence $(R_{\mathcal{Z}_n}+\lambda \psi(\|\cdot\|_{\mathcal{H}_K}))_n$ $\Gamma$-converges to $R+\lambda\psi(\|\cdot\|_{\mathcal{H}_K})$ %for the fact that $(R_n)_n$ $\Gamma$-converges to $R$ (as in lemma 
% since \cref{lem gamma-conv} and the fact that $\lambda\psi(\|\cdot\|_{\mathcal{H}_K})$ is continuous (see \cite{braides_gamma-convergence_2002}).
% \end{proof}
\begin{corollary}
\label{corollary g_n}
Under the assumptions of \cref{lem gamma-conv} and requiring that $V$ is strictly convex with respect to the second variable we consider $\hat{g}^{(n)}_R$, $g_{R_y}$,  $(\hat{f}^{(n)}_R)^{\dagger}$ and $f_{R_y}^{\dagger}$,  defined in equations \cref{hat g^(n)_R}, \cref{approx_problem},  \cref{fdaggerRdiscretized} and \cref{fdaggerR}, respectively. Then, as $n \to \infty$
\begin{eqnarray}
\hat{g}^{(n)}_{R}\to g_{R_y}, \quad \text{and} \quad
%\hat{g}^{(n)}_{R,\lambda}\to \hat{g}_{R,\lambda}, \quad 
(\hat{f}^{(n)}_R)^{\dagger} \longrightarrow f^{\dagger}_{R_y},  
%\hat{f}^{(n)}_{R,\lambda} \longrightarrow f_{R,\lambda},
\end{eqnarray}
where the convergence is uniform in $\mathcal{H}_K$ and $\mathcal{H}_1$, respectively.
\end{corollary}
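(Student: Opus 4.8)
The plan is to obtain $\hat{g}^{(n)}_R \to g_{R_y}$ as a direct application of the fundamental theorem of $\Gamma$-convergence (\cref{gamma conv theorem}) to the sequence studied in \cref{lem gamma-conv}, and then to transport this convergence to the inverse-problem solutions through the isometric correspondence established in \cref{correspondence,corollary correspondence solution discretization}. The first task is to check that the hypotheses of \cref{gamma conv theorem} are met, and the only non-immediate one is that the limit functional $R_y$ admits a \emph{unique} minimizer over $\mathcal{H}_K$. Coercivity and lower semicontinuity of $R_y$ are inherited from the corresponding properties of $V$ already assumed in \cref{lem gamma-conv}; uniqueness is precisely where the additional hypothesis that $V$ is strictly convex in its second argument is used. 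Since the evaluation map $g \mapsto g(x)$ is linear for every fixed $x$, the integrand $g \mapsto V(y(x),g(x))$ is strictly convex, and because distinct elements of $\mathcal{H}_K$ are distinct (continuous) functions they differ on a set of positive $dx$-measure, so integrating over $\mathcal{X}$ yields strict convexity of $R_y$ and hence a unique point of minimum $g_{R_y}$.

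With uniqueness secured, the argument is a mechanical assembly. By \cref{lem gamma-conv} the sequence $(R_{\mathcal{Z}_n})$ is equi-coercive and $\Gamma$-converges to $R_y$, so taking $F_n = R_{\mathcal{Z}_n}$, $F = R_y$, minimizers $h_n = \hat{g}^{(n)}_R$ and unique limit minimizer $h = g_{R_y}$, \cref{gamma conv theorem} applies verbatim and gives $\hat{g}^{(n)}_R \to g_{R_y}$ in the $\mathcal{H}_K$-norm. Because the reproducing kernel satisfies $K(x,x)\le c^2$ under \cref{assumption1}, the pointwise bound $|g(x)| \le c\,\|g\|_{\mathcal{H}_K}$ holds for every $x\in\mathcal{X}$, so convergence in the $\mathcal{H}_K$-norm upgrades automatically to uniform convergence of the functions over $\mathcal{X}$, which is the asserted convergence in $\mathcal{H}_K$.

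To pass to the inverse-problem solutions I would invoke the isometry $\tilde{A}^{-1}:\mathcal{H}_K \to Ker(A)^{\perp}$, available since $A$ is a partial isometry. By \cref{corollary correspondence solution discretization} we have $(\hat{f}^{(n)}_R)^{\dagger} = \tilde{A}^{-1}\hat{g}^{(n)}_R$ and by \cref{correspondence} we have $f^{\dagger}_{R_y} = \tilde{A}^{-1} g_{R_y}$, whence, exactly as in \cref{norm equality},
\[
\|(\hat{f}^{(n)}_R)^{\dagger} - f^{\dagger}_{R_y}\|_{\mathcal{H}_1} = \|\tilde{A}^{-1}(\hat{g}^{(n)}_R - g_{R_y})\|_{\mathcal{H}_1} = \|\hat{g}^{(n)}_R - g_{R_y}\|_{\mathcal{H}_K} \longrightarrow 0 .
\]
This establishes the second convergence in the $\mathcal{H}_1$-norm and completes the proof. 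The main obstacle is genuinely confined to the first paragraph: verifying that strict convexity of $V$ in its second variable propagates to strict convexity (hence uniqueness of the minimizer) of $R_y$ over the RKHS, since this is exactly the ingredient that makes \cref{gamma conv theorem} applicable; once the limit minimizer is unique, everything reduces to combining the $\Gamma$-convergence result with the isometric correspondence.
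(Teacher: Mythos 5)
Your proposal is correct and follows essentially the same route as the paper: apply the fundamental theorem of $\Gamma$-convergence together with the equi-coercivity and $\Gamma$-convergence from \cref{lem gamma-conv}, observe that $R_y$ has a unique minimizer, and transfer the convergence to $\mathcal{H}_1$ via the isometric identity \cref{norm equality}. The only difference is that you spell out why strict convexity of $V$ yields uniqueness of the minimizer of $R_y$, a point the paper merely asserts.
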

\begin{proof}
The convergence in $\mathcal{H}_K$ follows from \cref{gamma conv theorem} and \cref{lem gamma-conv}, %and \cref{lem gamma-conv lambda}, 
by observing that $R_y$ 
%and $R+\lambda\psi(\|\cdot\|_{\mathcal{H}_K})$
admits a unique minimizer. % respectively. 
% \begin{corollary}
% \label{corollary g_n_lambda}
% Under the same assumptions of Lemma \cref{lem gamma-conv lambda} we consider $\hat{g}^{(n)}_{R,\lambda}$ and $\hat{g}_{R,\lambda}$ defined in equations \cref{approx_probl_sampling_noisy} and \cref{approx_probl_reg}, respectively. Then
% \begin{eqnarray}\label{hat g_n_R_lambda to hat g_R_lambda}
% \hat{g}^{(n)}_{R,\lambda}\to \hat{g}_{R,\lambda}, \quad \text{as} \quad n\to \infty.
% \end{eqnarray}
% \end{corollary}
% \begin{proof}
% Equation $(\cref{hat g_n_R_lambda to hat g_R_lambda})$ follows by using Lemma $(\cref{lem gamma-conv lambda})$ and Theorem \cref{gamma conv theorem}, observing that $R+\lambda\psi(\|\cdot\|_{\mathcal{H}_K})$ admits a unique point of minimum. 
% \end{proof}
The convergence in $\mathcal{H}_1$ follows from the equality in \cref{norm equality}. %and \cref{norm equality lambda}.
\end{proof}
%The proofs of Corollaries \cref{corollary g_n} and \cref{corollary g_n_lambda} follow thanks to the results in Lemma \cref{lem gamma-conv} and \cref{lem gamma-conv lambda} and using the result in Theorem .., observing that $R$ and $R+\lambda\psi(\|\cdot\|_{\mathcal{H}_K})$ admit a unique point of minimum, respectively.
\begin{remark}
The same convergence result of \cref{corollary g_n} applies for Tikhonov type regularized solutions, i.e. 
$\hat{g}^{(n)}_{R_y,\lambda}$ and  $\hat{f}^{(n)}_{R_y,\lambda}$ converge to $\hat{g}_{R_y,\lambda}$ and $\hat{f}_{R,\lambda}$, respectively. Such a result follows from the fact that $(R_{\mathcal{Z}_n}+\lambda \psi(\|\cdot\|_{\mathcal{H}_K}))$ is equi-coercive and $\Gamma$-converges to $R_y+\lambda\psi(\|\cdot\|_{\mathcal{H}_K})$ which is a straightforward consequence of \cref{lem gamma-conv} and the fact that $\lambda\psi(\Vert\cdot\Vert_{\mathcal{H}_K})$ is continuous (see \cite{braides2006handbook}). 
\end{remark}

Finally, as the convergence property of the $R_{\mathcal{Z}_n}$-generalized solution 
% (where $R_{(\cdot)}$ includes both cases $R_{\rho}$ and $R_y$, but it can be extend to the general form 
% $R^{\nu,\vartheta}_{y}$ 
% defined in \cref{general-problem}) 
holds regardless the discretization scheme 
% both in the case of deterministic and probabilistic convergence, 
we can summarize functionals, solutions, 
%projections, 
convergence and discretization with the commutative diagrams shown in \cref{cd}.

%Quindi i risultati ottenuti sono indipendenti dal tipo di discretizzazione utilizzato e possono essere riassunti tramite l'uso di un diagramma commutativo.
% %We can consider the following commutative diagram, which is meaningful because of the results of convergence given in the current section.
% \begin{tikzcd}[back line/.style={densely dotted}, row sep=3em, column sep=3em]
% & R_y(Af) \ar{dl}[swap,sloped,near start]{\arg\min} \ar{rr}{discretiz} \ar[back line]{dd}[near end]{corresp} 
%   & & R_n(y,Af) \ar{dd}{corresp} \ar{dl}[swap,sloped,near start]{\arg\min} \\
% f^{\dagger}_R \ar[crossing over]{rr}[near start]{proj} \ar{dd}[swap]{corresp} 
%   & & (\hat{f}^{(n)}_R)^{\dagger} \ar{dd}[swap]{corresp} \\
% & R_y(g) \ar[crossing over]{rr}[near start]{discretiz} \ar{dl}[swap,sloped,near start]{\arg\min} %\ar[back line]{dl} 
%   & & R_n(y,g) \ar{dl}[swap,sloped,near start]{\arg\min} \\
% g_R  \ar[crossing over]{rr}[near start]{proj} & & \hat{g}_R^{(n)} \ar[crossing over, leftarrow]{uu}
% \end{tikzcd}
% \vspace{0.2cm}\\
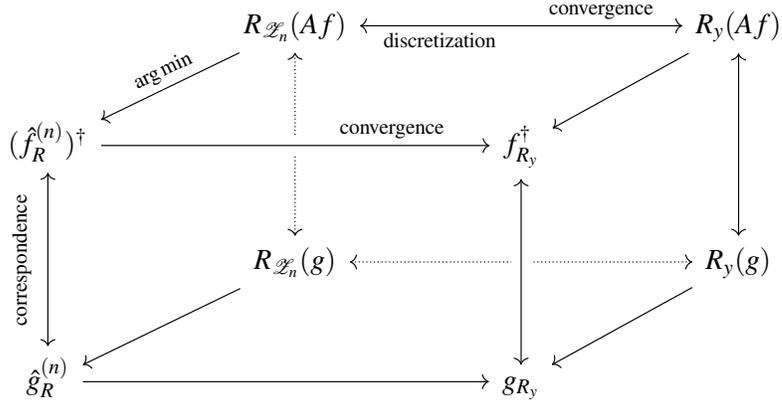
\begin{figure}[h!]
\begin{center}
\begin{tikzcd}[back line/.style={densely dotted}, row sep=2.em, column sep=4.5em]
& R_{\mathcal{Z}_n}(Af) 
\ar[]{dl}
[swap,sloped,near start]{\arg\min} 
\ar[leftrightarrow]{rr}
[near end]{\text{convergence}}
[near start,below]{\text{discretization}} 
%[near start,below]{\text{convergence}}
\ar[back line,leftrightarrow]{dd}[anchor=center,rotate=90,yshift=2ex]{}%{\text{correspondence}}
& & R_{y}(Af) 
\ar[leftrightarrow]{dd}%[swap,anchor=center,rotate=90,yshift=2ex]{\text{correspondence}}
\ar{dl}[swap,sloped,near start]{}\\%{\arg\min} \\
(\hat{f}^{(n)}_R)^{\dagger}
\ar[crossing over,rightarrow]{rr}{} %left
[near end]{\text{convergence}}
%[near start,below]{\text{}} %projection
\ar[leftrightarrow]{dd}[anchor=center,rotate=90,yshift=2ex]{\text{correspondence}} 
& & f^{\dagger}_{R_{y}} 
\ar[leftrightarrow]{dd}[anchor=center,rotate=90,yshift=2ex]{} \\%{\text{correspondence}} \\
& R_{\mathcal{Z}_n}(g) 
\ar[back line,crossing over,leftrightarrow]{rr}
%[near end]{\text{discretization}}
%[near start,below]{\text{convergence}}
\ar{dl}[swap,sloped,near start]{}%{\arg\min} %\ar[back line]{dl} 
& & R_{y}(g) 
\ar{dl}[swap,sloped,near start]{}\\%{\arg\min} \\
\hat{g}_R^{(n)}
\ar[crossing over,rightarrow]{rr} %left
%[near end, below]{\text{convergence}}
%[]{\text{discretization}}
& & g_{R_{y}} 
\ar[crossing over, leftrightarrow]{uu}
\end{tikzcd}
\caption{A summary of the discretization and convergence results applied to the approximation problems in a RKHS. Arrows indicate: from left to right convergence processes; from right to left 
%projection processes in the front panel or 
discretization processes in the rear panel; from rear to front optimization processes; from top to bottom (and viceversa) the correspondence between inverse and direct problems.}\label{cd}
\end{center}
\end{figure}
The vertexes of the rear side of the cube represent the four minimizing functionals and the vertexes of the front side represent the corresponding solutions. The empirical and ideal cases are shown on the left and right sides, respectively. The arrows from left to right represent the convergence, while the arrows from right to left, on the rear side, represent the discretization. 
%and the projection 
%and the front side, respectively. 
The arrows from rear to front show the minimizing process. In particular,
along horizontal arrows of the front side of the cube we show the convergence of the empirical solutions to the ideal ones (\cref{probability conv of g,corollary g_n});
%and the corresponding projections (see equations \cref{projection-g} and \cref{projection-f}); 
along vertical arrows we show the correspondence between solutions of approximation problems in a RKHS and inverse problems (\cref{correspondence,corollary correspondence solution discretization}). 
\subsection{Application of the representer theorem}
% In this section we show that the sampling operator applied to problem \cref{fdaggerR} provides solutions which have a finite representation.
% %and that the sequence of these solutions as $n\to \infty$ is (defines) a regularization by projections of the solution of the problem $f^\dag_{R_y}$.
The representer theorem and its generalizations prove that the solution of problem \cref{approx_probl_sampling_noisy} belongs to a finite dimensional subspace of $\mathcal{H}_K$ \cite{scholkopf2001generalized}. 
%% of the problem (\cref{inv_probl_sampling_noisy}).
%The representer theorem gives us an instrument to generalize the regularization by projection (see \cite{engl_regularization_1996})
%%in the fully deterministic discretization scheme 
%to the case of non-least square problems.
Under the \cref{assumption1} on the linear operator $A:\mathcal{H}_1\to\mathcal{H}_2$, % 
% We fix $n$ points $x_1,\dots,x_n$ in the Borel space $\mathcal{X}$. 
let 
%$W^{(n)}$ be the projector operator into the space 
\begin{equation}
\label{H_K^n}
\mathcal{H}_{K}^{(n)}:=span\{K_{x_1},\dots,K_{x_n}\}
\end{equation}
and 
%$T^{(n)}$ be the projector operator onto the space 
\begin{equation}
\label{H_1^n}
\mathcal{H}_1^{(n)}:=span\{\phi_{x_1},\dots,\phi_{x_n}\},
\end{equation}
be two finite dimensional subspaces $\mathcal{H}_K^{(n)} \subset \mathcal{H}_K$ and $\mathcal{H}_1^{(n)} \subset \mathcal{H}_1$, where $\phi$ and $K$ are related by the equation \cref{Kernel}.
Under the aforementioned conditions on the loss function $V$ and $\psi$ (on which depends the penalty term), in the statistical learning setting the representer theorem allows us to write
%a representation for the solution 
% $\hat{g}_R^{(n)}$ defined in eq. (\cref{approx_probl_sampling}) and more general for the solution 
%$\hat{g}^{(n)}_{R,\lambda}$, defined in eq. (\cref{approx_probl_sampling_noisy}), under some assumptions on the fidelity-term $R$ and the penalty term described by $\psi$ [ ], which are satisfied in our framework. Therefore 
% \begin{eqnarray}\label{repr_g}
% \hat{g}^{(n)}_{R}=\sum_{i=1}^{n} \alpha_i K_{x_i}
% \end{eqnarray}
\begin{equation}
\label{repr_g_noisy}
\hat{g}^{(n)}_{R,\lambda}=\sum_{i=1}^{n} \beta_i K_{x_i},
\end{equation}
where $\beta_i\in\mathbb{R}$ for all $i\in\{1,\dots,n\}$ are appropriate coefficients. 
Thus, %to the representer theorem 
%(\cref{approx_probl_sampling}) and 
the problem \cref{approx_probl_sampling_noisy} can be re-formulated as follows
% \begin{eqnarray}
% \hat{g}_R^{(n)}:=\arg\min_{g\in\mathcal{H}^{(n)}_K} R_{\mathcal{Z}_n}(g)
% \end{eqnarray}
\begin{equation}\label{prob g}
\hat{g}_{R,\lambda}^{(n)}:=\arg\min_{g\in\mathcal{H}^{(n)}_K} R_{\mathcal{Z}_n}(g)+\lambda\psi(\| g\|_{\mathcal{H}_K}),
\end{equation}
where the optimization is performed on the finite dimensional subspace $\mathcal{H}^{(n)}_K$. 
Clearly, \cref{corollary correspondence solution discretization} 
%, given by $\hat{g}^{(n)}_{R,\lambda}= A\hat{f}^{(n)}_{R,\lambda}$, 
can be exploited to provide a representer theorem %on the solution $\hat{g}^{(n)}_{R,\lambda}$ 
for $\hat{f}^{(n)}_{R,\lambda}$. 
%Thanks to the result of the correspondence in \cref{corollary correspondence solution discretization}  we have the following
%, and we have called the solutions of the optimization problems in the same way for a reason.
%We give the following theorem in the same framework where the correspondence between the approximation problem in RKHS solution and the inverse problem solution applies. Such theorem gives a representation for the solution of the inverse problem that can be presented as follows.
\begin{proposition}
\label{representer-theorem-f}
The solution of the problem
%(\cref{inv_probl_sampling}) and more general the solution of the problem in eq. 
\cref{inv_probl_sampling_noisy} admits the following representation
% \begin{eqnarray}
% \hat{f}^{(n)}_R=\sum_{i=1}^{n} \gamma_i \phi_{x_i}
% \end{eqnarray}
\begin{eqnarray}
\hat{f}^{(n)}_{R,\lambda}=\sum_{i=1}^{n} \beta_i \phi_{x_i},
\end{eqnarray}
where $\beta_i\in\mathbb{R}$, for all $i\in\{1,\dots,n\}$ are the same coefficients of equation \cref{repr_g_noisy}.
Finally the problem 
%(\cref{inv_probl_sampling}) and 
\cref{inv_probl_sampling_noisy} can be re-formulated as follows
% \begin{eqnarray}
% \hat{f}_R^{(n)}:=\arg\min_{f\in\mathcal{H}^{(n)}_1} R_{\mathcal{Z}_n}(A^{(n)} f)
% \end{eqnarray}
% \begin{eqnarray}
% \hat{f}_R^{(n)}:=\arg\min_{f^{(n)}\in\mathcal{H}^{(n)}_1} R(y^{(n)},A^{(n)} (R^{(n)})^{-1} f^{(n)})
% \end{eqnarray}
%and
\begin{equation}\label{prob f}
\hat{f}_{R,\lambda}^{(n)}:=\arg\min_{f\in\mathcal{H}^{(n)}_1} R_{\mathcal{Z}_n}(A f)+\lambda\psi(\| f\|_{\mathcal{H}_1}),
\end{equation}
where $\mathcal{H}_1^{(n)}$ is defined in equation \cref{H_1^n}.
% \begin{eqnarray}
% \hat{f}_{R,\lambda,\delta}^{(n)}:=\arg\min_{f^{(n)}\in\mathcal{H}^{(n)}_1} R(y^{(n)},A^{(n)} (R^{(n)})^{-1} f^{(n)})+\lambda\psi(\| f^{(n)}\|_{\mathcal{H}_1}),
% \end{eqnarray}
%respectively.
% where $(R^{(n)})^*$ is the adjoint operator of the projector $R^{(n)}$.
\end{proposition}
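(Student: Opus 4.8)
The plan is to leverage the correspondence already proved in \cref{corollary correspondence solution discretization}, namely $\hat{f}^{(n)}_{R,\lambda}=\tilde{A}^{-1}\hat{g}^{(n)}_{R,\lambda}$, together with the representer theorem for the RKHS Tikhonov solution in \cref{repr_g_noisy}. The only genuinely new ingredient is to determine how the operator $\tilde{A}^{-1}$ acts on each kernel section $K_{x_i}$; once this is known, the representation for $\hat{f}^{(n)}_{R,\lambda}$ follows by linearity.

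First I would establish the pointwise identity $\tilde{A}^{-1}K_{x_i}=\phi_{x_i}$. By the definition of $A$ in \cref{scalar-product} and the kernel formula \cref{Kernel}, for every $r\in\mathcal{X}$ one has $(A\phi_{x_i})(r)=\langle\phi_{x_i},\phi_r\rangle_{\mathcal{H}_1}=K(x_i,r)=K_{x_i}(r)$, so $A\phi_{x_i}=K_{x_i}$ as elements of $\Im(A)=\mathcal{H}_K$. Next, by the feature-space identification noted in \cref{section Equivalence between problems}, namely $\overline{span\{\phi_x : x\in\mathcal{X}\}}=Ker(A)^{\perp}$, the element $\phi_{x_i}$ lies in $Ker(A)^{\perp}$, which is the domain of $\tilde{A}$. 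Hence $\tilde{A}\phi_{x_i}=K_{x_i}$ and therefore $\tilde{A}^{-1}K_{x_i}=\phi_{x_i}$.

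Then I would combine this with \cref{repr_g_noisy} and the linearity of $\tilde{A}^{-1}$ to obtain
\begin{equation*}
\hat{f}^{(n)}_{R,\lambda}=\tilde{A}^{-1}\hat{g}^{(n)}_{R,\lambda}=\tilde{A}^{-1}\sum_{i=1}^{n}\beta_i K_{x_i}=\sum_{i=1}^{n}\beta_i\,\tilde{A}^{-1}K_{x_i}=\sum_{i=1}^{n}\beta_i\phi_{x_i},
\end{equation*}
which yields the claimed representation with exactly the same coefficients $\beta_i$ appearing in \cref{repr_g_noisy}. For the reformulation, since the minimizer $\hat{f}^{(n)}_{R,\lambda}$ is now seen to belong to $\mathcal{H}_1^{(n)}=span\{\phi_{x_1},\dots,\phi_{x_n}\}$ defined in \cref{H_1^n}, the infimum of $R_{\mathcal{Z}_n}(Af)+\lambda\psi(\|f\|_{\mathcal{H}_1})$ over all of $\mathcal{H}_1$ is attained inside $\mathcal{H}_1^{(n)}$; consequently the unconstrained minimization \cref{inv_probl_sampling_noisy} is equivalent to the minimization restricted to $\mathcal{H}_1^{(n)}$, giving \cref{prob f}.

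I expect the main (and essentially only) obstacle to be the verification that $\tilde{A}^{-1}$ may legitimately be applied to each $K_{x_i}$, i.e.\ checking that $\phi_{x_i}\in Ker(A)^{\perp}$ so that $\tilde{A}\phi_{x_i}=K_{x_i}$ identifies $\phi_{x_i}$ as the $\tilde{A}^{-1}$-image of $K_{x_i}$; this rests on the feature-space identification $\overline{span\{\phi_x\}}=Ker(A)^{\perp}$ recorded earlier. The remaining steps are purely formal, relying on linearity of $\tilde{A}^{-1}$ and on the already-established equivalence of the two Tikhonov problems.
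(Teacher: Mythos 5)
Your proposal is correct and follows exactly the route the paper intends: it invokes the correspondence $\hat{f}^{(n)}_{R,\lambda}=\tilde{A}^{-1}\hat{g}^{(n)}_{R,\lambda}$ from \cref{corollary correspondence solution discretization} together with the RKHS representer theorem \cref{repr_g_noisy}, which is precisely what the paper sketches when it says the corollary ``can be exploited'' to obtain the result. Your explicit verification that $A\phi_{x_i}=K_{x_i}$ with $\phi_{x_i}\in Ker(A)^{\perp}$, hence $\tilde{A}^{-1}K_{x_i}=\phi_{x_i}$, is a useful detail the paper leaves implicit, and it is sound given the identification $\overline{span\{\phi_x\, , \, x\in\mathcal{X}\}}=Ker(A)^{\perp}$.
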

%The proof is a straightforward consequence  of the correspondence of solution given in Corollary 
%Trivially it can be seen by using the correspondence between the solutions: the solutions are linked by the operator $A$, which is linear and the link between the feature map $\phi$ and the reproducing kernel $K$ applies. 
The major consequence of this result is that it is sufficient to determine coefficients $\{ \beta_j \}_{j=1}^{n}$ in order to solve both problems \cref{prob g,prob f}. 
For the sake of completeness, we report the explicitly computation of the coefficients $\beta_j$ in the classical Tikhonov regularization case.

\begin{example}
Let us consider the Tikhonov regularization for a linear inverse problem which is known as penalized least square approach in supervised learning. Under the usual assumptions, we write the problem \cref{prob g} as
\begin{equation}\label{tik approx pb}
\hat{g}^{(n)}_{\lambda}=\arg\min_{g\in\mathcal{H}_K} \frac{1}{n}\sum_{i=1}^{n} (y_i-g(x_i))^2+\lambda\| g\|^2_{\mathcal{H}_K} ~,
\end{equation}
and the problem \cref{prob f} as
\begin{equation}\label{tik inv pb}
\hat{f}^{(n)}_{\lambda} =\arg\min_{f\in\mathcal{H}_1} \frac{1}{n}\sum_{i=1}^{n} (y_i-Af(x_i))^2+\lambda\| f\|^2_{\mathcal{H}_1} ~.
\end{equation}
Using the representer theorem and equation \cref{correspondence discretization} the solution of the two problems \cref{tik approx pb} and \cref{tik inv pb} is given by solving the following
\begin{eqnarray}
\hat{\beta}_{\lambda}^{(n)} =\arg\min_{\beta\in\mathbb{R}^n} \frac{1}{n}\| \textbf{y}-\textbf{K}\beta\|^2_2+\lambda \beta^T \textbf{K} \beta,
\end{eqnarray}
where $\textbf{y}$ is the $n$-dimensional vector $\textbf{y}=(y_1,\dots,y_n)^T$, and $\textbf{K}$ is the matrix with entries $\textbf{K}_{ij}:= K(x_i,x_j)$, for each $i,j\in\{1,\dots,n\}$.
In such a case the solution $\hat{\beta}^{(n)}$ is given by
\begin{eqnarray}
\hat{\beta}_{\lambda}^{(n)}=(\textbf{K}+\lambda n I)^{-1}\textbf{y},
\end{eqnarray}
and therefore solutions of problems \cref{tik approx pb} and \cref{tik inv pb} are given respectively by
\begin{eqnarray}
\hat{g}^{(n)}_{\lambda}=\textbf{k}^T(\textbf{K}+\lambda n I)^{-1}\textbf{y},
\end{eqnarray}
where $\textbf{k}=(K_{x_1},\dots,K_{x_n})^T$, and
\begin{eqnarray}
\hat{f}^{(n)}_{\lambda}=\Phi^T(\textbf{K}+\lambda n I)^{-1}\textbf{y},
\end{eqnarray}
where $\Phi=(\phi_{x_1},\dots,\phi_{x_n})^
T$.
\end{example}

%Abbiamo visto la convergenza delle soluzioni dei problemi finito dimensionali alle soluzioni dei problemi infinito dimensionali.
%In questa sezione vediamo come, indipendentemente dal tipo di campionamento effettuato, stocastico o deterministico, le soluzioni dei problemi finito dimensionali si rappresentano su uno spazio di dimensione pari al numero dei campioni conosciuti/utilizzati.
%Questo risultato vale sia per le soluzioni regolarizzate alla Tikhonov, sia per le soluzioni inverse generalizzate.
Analogously, the solutions $\hat{g}^{(n)}_R$ and $(\hat{f}^{(n)}_R)^{\dagger}$ defined in equations \cref{hat g^(n)_R} and \cref{fdaggerRdiscretized} respectively admit a finite representation. This follows from the fact that $\hat{g}^{(n)}_R$ can be seen as the minimizer of \cref{prob g} with $\psi=0$. Hence, at least a minimizer has a finite representation as $\psi$ is non-decreasing and it is unique as $R_y$ is strictly convex \cite{yu2013characterizing,argyriou2014unifying}.
% This can be derived by the results given in \cite{yu2013characterizing,argyriou2014unifying}, which aim to give necessary and sufficient conditions on the minimizing penalized functional in order to apply a representer theorem. Under hypotheses which guarantee the existence but not necessarily the uniqueness of the minimizer $\hat{g}^{(n)}_{R,\lambda}$ the sufficient and necessary condition on the penalty term so that at least a minimizer has a finite representation is $\psi$ non-decreasing function. The strictness of monotonicity of $\psi$ assures that each minimizer admits a finite representation. 
In the next proposition we give a simple alternative proof of the fact that $\hat{g}^{(n)}_R$ and $(\hat{f}^{(n)}_R)^{\dagger}$ admit a finite representation based on $\Gamma$-convergence.
\begin{proposition}
\label{representer g_n}
Let $R_{\mathcal{Z}_n}$ be defined in equation \cref{R_n} , with $V$ strictly convex, coercive (as the definition in \cref{probability conv of g}) and Lipschitz continuous function. %$\sigma$-admissible function.
The solution $\hat{g}^{(n)}_R$ defined in equation \cref{hat g^(n)_R} admits the following representation
\begin{equation}
\label{representer theorem g_R}
\hat{g}^{(n)}_R=\sum_{j=1}^{n} \alpha_j K_{x_j},
\end{equation}
where $\alpha_j\in\mathbb{R}$, for all $j\in\{1,\dots,n\}$ are appropriate coefficients. 
\end{proposition}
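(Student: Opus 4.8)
The plan is to realize $\hat{g}^{(n)}_R$ as the vanishing-regularization limit of the Tikhonov solutions $\hat{g}^{(n)}_{R,\lambda}$, each of which already lies in the finite-dimensional subspace $\mathcal{H}_K^{(n)}=\mathrm{span}\{K_{x_1},\dots,K_{x_n}\}$ by the representer theorem \cref{repr_g_noisy}, and then to pass to the limit by the fundamental theorem of $\Gamma$-convergence (\cref{gamma conv theorem}). The crucial preliminary observation is that, although $R_{\mathcal{Z}_n}$ depends on $g$ only through the finitely many values $g(x_1),\dots,g(x_n)$ and is therefore neither coercive nor strictly convex on all of $\mathcal{H}_K$, it becomes both when restricted to $\mathcal{H}_K^{(n)}$. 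Indeed, the evaluation map $E:\mathcal{H}_K^{(n)}\to\mathbb{R}^n$, $E(g)=(g(x_1),\dots,g(x_n))$, is linear with trivial kernel (its kernel is $\mathcal{H}_K^{(n)}\cap\mathrm{span}\{K_{x_i}\}^{\perp}=\{0\}$), hence a linear isomorphism onto its image. Composing the injective $E$ with the map $v\mapsto\frac1n\sum_i V(y_i,v_i)$, which is strictly convex and coercive on $\mathbb{R}^n$ by the strict convexity and coercivity of $V$, shows that $R_{\mathcal{Z}_n}$ is strictly convex and coercive on $\mathcal{H}_K^{(n)}$. Moreover $E(\mathcal{H}_K)=E(\mathcal{H}_K^{(n)})$, since $g(x_i)=\langle Pg,K_{x_i}\rangle_{\mathcal{H}_K}$ for the orthogonal projection $P$ onto $\mathcal{H}_K^{(n)}$, so the infimum of $R_{\mathcal{Z}_n}$ over $\mathcal{H}_K$ is attained on $\mathcal{H}_K^{(n)}$, where the minimizer is unique; I take this (minimal-norm) minimizer as $\hat{g}^{(n)}_R$, in agreement with \cref{hat g^(n)_R}.

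I would then fix an arbitrary sequence $\lambda_k\to 0^{+}$ and work entirely inside the closed subspace $\mathcal{H}_K^{(n)}$, setting $F_k := R_{\mathcal{Z}_n}+\lambda_k\psi(\|\cdot\|_{\mathcal{H}_K})$. Equi-coercivity of $(F_k)$ is immediate from the lemma characterizing equi-coercive sequences: since $\psi\ge 0$ one has $F_k\ge R_{\mathcal{Z}_n}$ for every $k$, and $R_{\mathcal{Z}_n}$ is a fixed lower semicontinuous (indeed continuous, by the Lipschitz estimate $|R_{\mathcal{Z}_n}(g)-R_{\mathcal{Z}_n}(g')|\le c\sigma\|g-g'\|_{\mathcal{H}_K}$ established in the proof of \cref{lem gamma-conv}) and coercive function on $\mathcal{H}_K^{(n)}$. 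To establish that $(F_k)$ $\Gamma$-converges to $R_{\mathcal{Z}_n}$, I would verify the two inequalities directly: the \emph{lim inf} inequality follows from $F_k\ge R_{\mathcal{Z}_n}$ together with the continuity of $R_{\mathcal{Z}_n}$, while the \emph{lim sup} inequality is witnessed by the constant recovery sequence $g_k\equiv g$, for which $F_k(g)=R_{\mathcal{Z}_n}(g)+\lambda_k\psi(\|g\|_{\mathcal{H}_K})\to R_{\mathcal{Z}_n}(g)$.

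With equi-coercivity, $\Gamma$-convergence, and uniqueness of the minimizer of the limit functional in hand, \cref{gamma conv theorem} gives $\hat{g}^{(n)}_{R,\lambda_k}\to g^{\ast}$, where $g^{\ast}$ is the unique minimizer of $R_{\mathcal{Z}_n}$ over $\mathcal{H}_K^{(n)}$, i.e. $g^{\ast}=\hat{g}^{(n)}_R$. Finally, since each $\hat{g}^{(n)}_{R,\lambda_k}$ belongs to $\mathcal{H}_K^{(n)}$ by \cref{repr_g_noisy} and $\mathcal{H}_K^{(n)}$ is finite-dimensional, hence closed, the limit $\hat{g}^{(n)}_R$ also lies in $\mathcal{H}_K^{(n)}=\mathrm{span}\{K_{x_1},\dots,K_{x_n}\}$, which is precisely the representation \cref{representer theorem g_R} for suitable coefficients $\alpha_j\in\mathbb{R}$.

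The step I expect to be the main obstacle is the careful handling of the degeneracy of $R_{\mathcal{Z}_n}$ on the ambient space: one must avoid applying the $\Gamma$-convergence machinery on all of $\mathcal{H}_K$, where neither coercivity nor a unique minimizer is available, and instead localize to $\mathcal{H}_K^{(n)}$, checking both that this restriction does not alter the value of the infimum and that the element selected in the limit is exactly the minimal-norm global minimizer $\hat{g}^{(n)}_R$ (any orthogonal component lies in $(\mathcal{H}_K^{(n)})^{\perp}$, leaves $R_{\mathcal{Z}_n}$ unchanged, and strictly increases the norm). Once this reduction is secured, the two $\Gamma$-limit inequalities and the equi-coercivity bound are routine.
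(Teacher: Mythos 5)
Your proof is correct and follows essentially the same route as the paper's: both realize $\hat{g}^{(n)}_R$ as the $\lambda\to 0$ limit of the Tikhonov solutions $\hat{g}^{(n)}_{R,\lambda}$, invoke the representer theorem to place each $\hat{g}^{(n)}_{R,\lambda}$ in $\mathcal{H}_K^{(n)}$, and pass to the limit via the fundamental theorem of $\Gamma$-convergence. Your version is in fact more careful at the two delicate points: you localize to $\mathcal{H}_K^{(n)}$ to secure the coercivity and uniqueness of the minimizer required by \cref{gamma conv theorem} (the paper applies it on all of $\mathcal{H}_K$, where $R_{\mathcal{Z}_n}$ is constant along $(\mathcal{H}_K^{(n)})^{\perp}$), and you conclude from the closedness of the finite-dimensional subspace rather than from convergence of the individual coefficients $\beta_j^{\lambda}$, which the paper asserts without justification and which can fail when the $K_{x_j}$ are linearly dependent.
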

% \begin{theoremmm}
% Under assumptions ... ... we have that the solutions of the finite dimensional problems are the projections of the infinite dimensional ones on the space $\mathcal{H}_K^{(n)}$, i.e.   $\hat{g}^{(n)}_{R,\lambda} = W^{(n)}\hat{g}_{R,\lambda}$ and $\hat{g}^{(n)}_{R} = W^{(n)}\hat{g}_{R}$.
% \end{theoremmm}
\begin{proof}
Let $\lambda>0$ and let $\psi$ be a continuous convex and strictly increasing real-valued function. 
Fixed $n\in\mathbb{N}$, the sequence $(R_{\mathcal{Z}_n}+\lambda\psi(\|\cdot\|_{\mathcal{H}_K}))_{\lambda}$ satisfies the hypotheses of \cref{gamma conv theorem} and it $\Gamma$-converges to $R_{\mathcal{Z}_n}$ as $\lambda\to 0$. This proves the convergence of minimizers, i.e. $\hat{g}^{(n)}_{R,\lambda}\to\hat{g}^{(n)}_R$ as $\lambda\to 0$, uniformly in $\|\cdot\|_{\mathcal{H}_K}$ for all $n\in\mathbb{N}$, where $\hat{g}^{(n)}_{R,\lambda}$ is defined in equation \cref{ERM-g}.
%We use the representer theorem for the solution $\hat{g}^{(n)}_{R,\lambda}$. Indeed, 
%as reminded in eq. \cref{repr_g_noisy} , 
Moreover, $\hat{g}^{(n)}_{R,\lambda}$ admits the following representation
\begin{eqnarray}
\hat{g}^{(n)}_{R,\lambda} =\sum_{j=1}^{n}\beta^{\lambda}_j K_{x_j},
\end{eqnarray}
where $\beta^{\lambda}_j\in\mathbb{R}$ for all $j\in\{1,\dots,n\}$.
%i.e. $\hat{g}^{(n)}_{R,\lambda}=W^{(n)}\tilde{g}$, where $W^{(n)}$ is the projector in $\mathcal{H}_K^{(n)}:=span\{K_{x_1},\dots, K_{x_n}\}$ and $\tilde{g}\in\mathcal{H}_K$. 
%Since the sequence $\mathcal{H}_K^{(n)}$ is an encapsulated sequence whose union is dense in $\mathcal{H}_K$ then $\hat{g}^{(n)}_{R,\lambda}\to \tilde{g}$. Thanks to the the fundamental theorem of $\Gamma$-convergence we conclude that $\hat{g}^{(n)}_{R,\lambda}\to \hat{g}_{R,\lambda}$, as $n\to+\infty$. 
% For the second part, let us start by remarking that, for each $n\in \mathbb N$ we have that  $\hat{g}^{(n)}_{R,\lambda}  \to  \hat{g}^{(n)}_{R}$ for $\lambda \to 0$.
Therefore, $\sum_{j=1}^{n}\beta_j^{\lambda} ~ K_{x_j}$ pointwise converges to $\hat{g}_R^{(n)} \in \mathcal H_K$  as $\lambda\to 0$ and each $\beta_j^{\lambda}$ has to converge to some value $\beta_j^0$.
The limit can be written as $\sum_{j=1}^{n}\beta_j^0~ K_{x_j}$and this shows that $\hat{g}_R^{(n)} \in \mathcal H^{(n)}_K$.
\end{proof}
\begin{corollary}
Under assumptions of \cref{representer g_n}, consider $(\hat{f}^{(n)}_R)^{\dagger}$ defined in equation \cref{fdaggerRdiscretized}. Then $(\hat{f}^{(n)}_R)^{\dagger}$ admits the following representation
\begin{equation}\label{representer theorem f_R}
(\hat{f}^{(n)}_R)^{\dagger}=\sum_{j=1}^{n} \alpha_j \phi_{x_j},
\end{equation}
where $\alpha_j\in\mathbb{R}$, for all $j\in\{1,\dots,n\}$ are the same coefficients in equation \cref{representer theorem g_R}.
\end{corollary}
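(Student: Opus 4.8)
The plan is to leverage the correspondence between the two discrete problems established in \cref{corollary correspondence solution discretization} together with the finite representation of $\hat{g}^{(n)}_R$ already obtained in \cref{representer g_n}. Concretely, I would start from the identity $(\hat{f}^{(n)}_R)^{\dagger} = \tilde{A}^{-1} \hat{g}^{(n)}_R$ and substitute the expansion $\hat{g}^{(n)}_R = \sum_{j=1}^n \alpha_j K_{x_j}$ coming from \cref{representer theorem g_R}. Since $\tilde{A}^{-1}$ is a linear operator, the whole argument reduces to identifying the image of each basis element $K_{x_j}$ under $\tilde{A}^{-1}$, and in particular the coefficients $\alpha_j$ will be transported unchanged.

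The central step is to show that $\tilde{A}^{-1} K_{x_j} = \phi_{x_j}$ for every $j$. For this I would compute $A\phi_{x_j}$ directly using the defining relation \cref{scalar-product}: for all $x \in \mathcal{X}$ one has $(A\phi_{x_j})(x) = <\phi_{x_j}, \phi_x>_{\mathcal{H}_1} = K(x,x_j) = K_{x_j}(x)$, where the middle equality is the definition of the kernel in \cref{Kernel} together with its symmetry. Hence $A\phi_{x_j} = K_{x_j}$. Because $\phi_{x_j}$ lies in the feature space $\mathcal{F} = \overline{span\{\phi_x : x \in \mathcal{X}\}} = Ker(A)^{\perp}$, the restriction $\tilde{A}$ acts on it exactly as $A$, so that $\tilde{A}\phi_{x_j} = K_{x_j}$, i.e. $\tilde{A}^{-1} K_{x_j} = \phi_{x_j}$.

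Combining these observations and using linearity of $\tilde{A}^{-1}$ yields $(\hat{f}^{(n)}_R)^{\dagger} = \tilde{A}^{-1}\sum_{j=1}^n \alpha_j K_{x_j} = \sum_{j=1}^n \alpha_j \tilde{A}^{-1} K_{x_j} = \sum_{j=1}^n \alpha_j \phi_{x_j}$, which is precisely the claimed representation, with the same coefficients $\alpha_j$ appearing in \cref{representer theorem g_R}.

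I do not expect any serious obstacle: the statement is essentially a transport of the representer-type expansion of $\hat g^{(n)}_R$ through the partial isometry $\tilde{A}^{-1}$. The only point deserving care is the membership $\phi_{x_j} \in Ker(A)^{\perp}$, which is what guarantees that $\tilde{A}^{-1}$ behaves as a genuine inverse on the relevant elements rather than merely as a left inverse; this follows from the identification $\mathcal{F} = Ker(A)^{\perp}$ recorded in the construction of the feature map. Everything else is routine linearity combined with the reproducing property.
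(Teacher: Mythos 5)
Your proposal is correct and follows exactly the route the paper intends: the paper omits the details, stating only that the corollary is a straightforward consequence of the correspondence $(\hat{f}^{(n)}_R)^{\dagger}=\tilde{A}^{-1}\hat{g}^{(n)}_R$ from \cref{corollary correspondence solution discretization} and the finite representation of $\hat{g}^{(n)}_R$ from \cref{representer g_n}. Your explicit verification that $A\phi_{x_j}=K_{x_j}$ and that $\phi_{x_j}\in Ker(A)^{\perp}$, so that $\tilde{A}^{-1}K_{x_j}=\phi_{x_j}$, correctly supplies the one step the paper leaves implicit.
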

The proof is a straightforward consequence of \cref{representer g_n} and \cref{corollary correspondence solution discretization}.
\ifdefined\ENGL
LEGAME ENGL: We give a generalization of the convergence result obtained in Theorem 3.24 [Engl] about the convergence of the solution of the projection method, called dual least-square method to the classical generalized solution of inverse problem, that is in the context of least square solutions. In our context we use our generalization of the least square solution of minimal norm. In fact the generalized least square solution is the the solution defined in eq. (\cref{fdaggerR}), when $R_y(Af)=\| y-Af\|^2$. We put in the following more general setting. Let
\begin{eqnarray}\label{R}
R_y(Af)=\int_{\mathcal{X}} V(y,Af(x)) dx,
\end{eqnarray}
where for simplicity $\mathcal{X}$ is a compact set of $\mathbb{R}^p$ and without loss of generality we set $\mathcal{X}=[0,1]^p$ and we assume $V$ continuous. Obviously if $V(y,Af(x))=(y-Af(x))^2$ ricadiamo nel caso della generalized least square solution. Da Engl si ha che la soluzione least square del problema proiettato converge alla generalized least square solution. In our setting we consider the solution of the projected problem as defined the solution of minimum norm which minimizes
\begin{eqnarray}\label{R_n}
R_{n}(y,Af)=\frac{1}{n}\sum_{i=1}^{n} V(y_i,(Af)(x_i)).
\end{eqnarray}
In the case of least square $R_n(y,Af)=\frac{1}{n}\sum_{i=1}^{n} (y_i-(Af)(x_i))^2$, which is the norm in $\mathbb{R}^n$ associated to the inner product of the empirical $L^2$ structure 
\begin{eqnarray}
<y,y'>_{\mathbb{R}^n}:=\frac{1}{n}\sum_{i=1}^{n} y_i y'_i,
\end{eqnarray}
che formalmente si ottiene andando a sostituire al posto della misura di Lebesgue la misura ....(non mi ricordo il nome). Noi generalizziamo, considerando $V$ una loss generica, alla quale chiediamo la continuita'. Diamo il seguente teorema
\begin{lemma}
Let $F_{n,\lambda}$ be the functional defined as following: $y\in\mathcal{H}_2$, $g\in\mathcal{H}_K$
\begin{eqnarray}
F_{n,\lambda}(y,g) = R_{n}(y,g)+\lambda\psi(\| g\|_{\mathcal{H}_K}),
\end{eqnarray}
where $R_n$ is defined in eq. (\cref{R_n}). Let $F_{\lambda}$ be 

\end{lemma}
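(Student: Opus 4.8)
The plan is to complete the lemma and prove it with the $\Gamma$-convergence machinery already assembled in \cref{section Convergence}. As the statement is truncated after ``Let $F_{\lambda}$ be'', I read $F_{\lambda}$ as the ideal penalized functional $F_{\lambda}(y,g):=R_y(g)+\lambda\psi(\|g\|_{\mathcal{H}_K})$, with $R_y$ as in \cref{R}, and the lemma as asserting that, for every fixed $\lambda>0$, the sequence $(F_{n,\lambda})_n$ (viewed for fixed $y$ as a sequence of functionals of $g\in\mathcal{H}_K$) is equi-coercive and $\Gamma$-converges to $F_{\lambda}$, so that its minimizers converge, $\hat{g}^{(n)}_{R,\lambda}\to\hat{g}_{R_y,\lambda}$. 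I would establish equi-coercivity and $\Gamma$-convergence separately and then invoke the fundamental theorem of $\Gamma$-convergence in the form of \cref{gamma conv theorem}.

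For equi-coercivity I would exhibit a single coercive lower semicontinuous minorant. Since $V\ge 0$ gives $R_{\mathcal{Z}_n}\ge 0$, we have $F_{n,\lambda}\ge\lambda\psi(\|\cdot\|_{\mathcal{H}_K})$ for every $n$. The function $\lambda\psi(\|\cdot\|_{\mathcal{H}_K})$ is continuous, and because $\psi$ is convex and strictly increasing it is coercive (its right derivative becomes positive and, by convexity, stays bounded below by a positive constant, forcing $\psi(t)\to+\infty$). Hence the equi-coercivity characterization recalled in \cref{section Convergence}, namely that a sequence is equi-coercive exactly when it is bounded below by a coercive lower semicontinuous function, applies with $G:=\lambda\psi(\|\cdot\|_{\mathcal{H}_K})$. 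This step is routine.

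For the $\Gamma$-convergence I would reduce to \cref{lem gamma-conv}, which already shows that $(R_{\mathcal{Z}_n})$ $\Gamma$-converges to $R_y$; in fact its proof establishes the stronger continuous convergence $R_{\mathcal{Z}_n}(g_n)\to R_y(g)$ along every sequence $g_n\to g$ in $\mathcal{H}_K$. Adding the fixed continuous perturbation $\lambda\psi(\|\cdot\|_{\mathcal{H}_K})$ preserves this, since $g_n\to g$ implies $\|g_n\|_{\mathcal{H}_K}\to\|g\|_{\mathcal{H}_K}$ and hence $\lambda\psi(\|g_n\|_{\mathcal{H}_K})\to\lambda\psi(\|g\|_{\mathcal{H}_K})$; therefore $F_{n,\lambda}(g_n)\to F_{\lambda}(g)$ along every converging sequence. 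Continuous convergence yields both the lim inf inequality and the lim sup inequality (the latter with the constant recovery sequence $g_n=g$), so $(F_{n,\lambda})$ $\Gamma$-converges to $F_{\lambda}$. This is precisely the stability of $\Gamma$-convergence under a continuous additive perturbation already invoked in the remark following \cref{corollary g_n} (see \cite{braides2006handbook}).

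Finally I would verify that $F_{\lambda}$ has a unique minimizer: under the standing hypotheses $R_y$ is strictly convex in $g$ and $\psi(\|\cdot\|_{\mathcal{H}_K})$ is convex, so $F_{\lambda}$ is strictly convex, lower semicontinuous and coercive, hence possesses a unique minimum point. Then \cref{gamma conv theorem} gives the convergence of the minimizers, and the correspondence of \cref{theorem methods} transfers the conclusion to the inverse side, $\hat{f}^{(n)}_{R,\lambda}\to\hat{f}_{R_y,\lambda}$. The only genuinely delicate ingredient is the $\Gamma$-convergence of the data-fidelity term, but that is exactly \cref{lem gamma-conv}; everything else is the bookkeeping of equi-coercivity, continuity of the penalty, and uniqueness of the limit minimizer.
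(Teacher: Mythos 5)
Your completion of the truncated lemma matches the paper's intent (it is exactly the setting of the remark following \cref{corollary g_n} and of the draft theorem that follows this lemma fragment), and your proof is essentially the paper's own route: $\Gamma$-convergence of $R_{\mathcal{Z}_n}+\lambda\psi(\|\cdot\|_{\mathcal{H}_K})$ to $R_y+\lambda\psi(\|\cdot\|_{\mathcal{H}_K})$ obtained from \cref{lem gamma-conv} together with the continuity of the penalty (the paper's own argument splits the same three terms in a triangle inequality), followed by \cref{gamma conv theorem} and the isometric transfer of the conclusion to $\mathcal{H}_1$ via the correspondence of \cref{theorem methods}. The only deviation is favorable: you \emph{derive} equi-coercivity from the minorant $\lambda\psi(\|\cdot\|_{\mathcal{H}_K})$ --- correctly noting that convexity plus strict monotonicity of $\psi$ forces $\psi(t)\to+\infty$ --- whereas the paper's draft theorem simply assumes equi-coercivity of the penalized sequence, so your version slightly strengthens the paper's argument without changing its structure.
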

\begin{theoremmm}
Let $x_1,\dots,x_n\in\mathcal{X}$ such that the set of point $\{x_n\}$ is dense in $\mathcal{X}$. Let $\lambda>0$. Let $\hat{f}_{R,\lambda}$ be the solution of
\begin{eqnarray}\label{f}
\arg\min_{f\in\mathcal{H}_1} R_y(Af)+\lambda\psi(\| f\|_{\mathcal{H}_1}),
\end{eqnarray}
where $R$ is defined in eq. (\cref{R}), 
with $V$ continuous. Let $(\hat{f}^{n}_{R,\lambda})$ be the solution of minimum norm of 
\begin{eqnarray}\label{f_n}
\arg\min_{f\in\mathcal{H}_1} R_n(y,Af)+\lambda\psi(\| f\|_{\mathcal{H}_1}),
\end{eqnarray}
where $R_n$ is defined in eq. (\cref{R_n}). We assume that the sequence $(R_n+\lambda\psi(\| \cdot\|_{\mathcal{H}_1}))_n$ is equicoercive (basta forse scrivere $(R_n)_n$ equicoercive).
Then
\begin{eqnarray}
(\hat{f}^{(n)}_{R,\lambda})\to \hat{f}_{R,\lambda}, \text{ as } n\to+\infty.
\end{eqnarray}
\end{theoremmm}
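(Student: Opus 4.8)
The plan is to read the statement as a $\Gamma$-convergence result on $\mathcal{H}_1$ and to apply the fundamental theorem of $\Gamma$-convergence, available here as \cref{gamma conv theorem}, to the sequence of functionals
\[
F_n(f) := R_n(y,Af)+\lambda\psi(\| f\|_{\mathcal{H}_1}), \qquad F(f):=R_y(Af)+\lambda\psi(\| f\|_{\mathcal{H}_1}),
\]
with $R_n$ and $R_y$ as in \cref{R_n} and \cref{R}. This is the inverse-side analogue of \cref{lem gamma-conv} and \cref{corollary g_n}. Since the equi-coerciveness of $(F_n)$ is part of the hypotheses, the two points I would need to establish are: (i) that $(F_n)$ $\Gamma$-converges to $F$ in the $\mathcal{H}_1$-norm, and (ii) that $F$ possesses a unique minimizer, which I will identify with the minimum-norm solution $\hat{f}_{R,\lambda}$ of \cref{f}.

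For step (i) my plan is to prove the stronger property of continuous convergence, namely $F_n(f_n)\to F(f)$ for every sequence with $\| f_n-f\|_{\mathcal{H}_1}\to 0$; this delivers the lim inf inequality directly and the lim sup inequality through the constant recovery sequence. The penalty term converges by continuity of $\psi$. The data term is where the work lies, and the key observation is that \cref{assumption1} upgrades norm convergence to uniform convergence of the images: from $|Af_n(x)-Af(x)|=|\langle f_n-f,\phi_x\rangle_{\mathcal{H}_1}|\le c\,\| f_n-f\|_{\mathcal{H}_1}$ one gets $Af_n\to Af$ uniformly on $\mathcal{X}$. I would then split $|R_n(y,Af_n)-R_y(Af)|$ into $|R_n(y,Af_n)-R_n(y,Af)|$, controlled by the uniform continuity of $V$ on the relevant compact set (here the compactness of $\mathcal{Y}$ and the boundedness of the values $Af_n(x_i)$ enter), plus $|R_n(y,Af)-R_y(Af)|$, which vanishes by the density of $(x_n)$ in $\mathcal{X}$ and the definition of the Riemann integral exactly as in \cref{lem gamma-conv}. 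I expect this to be the main obstacle: in \cref{lem gamma-conv} a Lipschitz hypothesis on $V$ was used, whereas here $V$ is only continuous, and it is precisely the RKHS evaluation bound that lets uniform continuity replace the Lipschitz estimate.

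For step (ii) the plan is to argue that the penalty enforces uniqueness: since $\psi$ is strictly increasing, projecting any candidate onto $Ker(A)^{\perp}$ leaves $R_y(Af)$ unchanged while strictly lowering the norm unless the $Ker(A)$-component is zero, so every minimizer lies in $Ker(A)^{\perp}$; there $\tilde{A}$ is injective and strict convexity of $V$ makes $p\mapsto R_y(\tilde{A}p)$ strictly convex, whence $F$ has a unique minimizer, equal to $\hat{f}_{R,\lambda}$. The representer theorem \cref{representer-theorem-f} guarantees that the minimum-norm minimizers $\hat{f}^{(n)}_{R,\lambda}$ are likewise well defined, so that \cref{gamma conv theorem} applies and yields $\hat{f}^{(n)}_{R,\lambda}\to\hat{f}_{R,\lambda}$. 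As a cross-check I would also record the shorter route through the correspondence \cref{theorem methods}: writing $\hat{f}^{(n)}_{R,\lambda}=\tilde{A}^{-1}\hat{g}^{(n)}_{R,\lambda}$ and $\hat{f}_{R,\lambda}=\tilde{A}^{-1}\hat{g}_{R,\lambda}$ and using that $\tilde{A}^{-1}$ is an isometry of $\mathcal{H}_K$ onto $Ker(A)^{\perp}$ (\cref{RKHS}), the convergence on $\mathcal{H}_K$ from \cref{corollary g_n} transfers verbatim via the norm identity \cref{norm equality}.
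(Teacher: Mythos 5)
Your proposal is correct, but your primary route is genuinely different from the paper's. The paper transfers everything to the RKHS side: it proves that $(R_n+\lambda\psi(\|\cdot\|_{\mathcal{H}_K}))_n$ $\Gamma$-converges to $R_y+\lambda\psi(\|\cdot\|_{\mathcal{H}_K})$ on $\mathcal{H}_K$ (via the reproducing property, the bound $\|K_{x_i}\|_{\mathcal{H}_K}\le c$ and, notably, a Lipschitz estimate on $V$), applies the fundamental theorem of $\Gamma$-convergence there to obtain $\hat g^{(n)}_{R,\lambda}\to\hat g_{R,\lambda}$, and only then pulls the convergence back to $\mathcal{H}_1$ through the isometry $\|\hat g^{(n)}_{R,\lambda}-\hat g_{R,\lambda}\|_{\mathcal{H}_K}=\|\hat f^{(n)}_{R,\lambda}-\hat f_{R,\lambda}\|_{\mathcal{H}_1}$ --- exactly the route you relegate to a ``cross-check''. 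Your main argument instead runs the $\Gamma$-convergence directly on $\mathcal{H}_1$ for $F_n(f)=R_n(y,Af)+\lambda\psi(\|f\|_{\mathcal{H}_1})$. The two are isometric reformulations of one another, but your version buys three things: it uses the equicoercivity hypothesis in the exact form the statement gives it (with respect to $\|\cdot\|_{\mathcal{H}_1}$); it replaces the Lipschitz estimate by uniform continuity of $V$ on $\mathcal{Y}\times[-M,M]$ --- legitimate, since $|Af_n(x)-Af(x)|\le c\,\|f_n-f\|_{\mathcal{H}_1}$ gives uniform convergence and the evaluations stay in a fixed compact set --- thereby honouring the statement's weaker hypothesis that $V$ is merely continuous, whereas the paper's own proof quietly invokes the Lipschitz constant of $V$; and it supplies the uniqueness-of-the-limit-minimizer check that \cref{gamma conv theorem} requires and that the paper leaves implicit (for this you do borrow strict convexity of $V$ from the standing assumptions of \cref{discrete-data}, since the statement itself does not list it). The one weak point you share with the paper is the claim that $\frac{1}{n}\sum_{i}V(y(x_i),Af(x_i))\to\int_{\mathcal{X}}V(y(x),Af(x))\,dx$ follows from density of $(x_n)$ alone; strictly speaking this requires the empirical measure of the sample points to converge to Lebesgue measure, not mere density, but that is an issue inherited from \cref{lem gamma-conv}, not one you introduced.
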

\begin{proof}
We consider the approximation problems in the RKHS $\mathcal{H}_K$ associated to the problems in eq. $(\cref{f})$ and (\cref{f_n}), respectively (seen section ..). Let $\hat{g}^{(n)}_{R,\lambda}$ be the solution of the problem
\begin{eqnarray}\label{g_n}
\hat{g}^{(n)}_{R,\lambda}=\arg\min_{g\in\mathcal{H}_K} R_n(y,g)+\lambda\psi(\| g\|_{\mathcal{H}_K})
\end{eqnarray}
and let $\hat{g}_{R,\lambda}$ be the solution of the problem
\begin{eqnarray}
\hat{g}_{R,\lambda}=\arg\min_{g\in\mathcal{H}_K} R_y(g)+\lambda\psi(\| g\|_{\mathcal{H}_K}).
\end{eqnarray}
First we prove that $(R_n+\lambda\psi(\| \cdot\|_{\mathcal{H}_K}))_n$ $\Gamma$-converges to $R+\lambda\psi(\| \cdot\|_{\mathcal{H}_K}$. Let $g\in\mathcal{H}_K$ and let $(g_n)_n$ be a sequence converges to $g$, i.e. $\| g_n-g\|_{\mathcal{H}_K}\to 0$ then we observe the following inequality
\begin{eqnarray}
 & & \left|R_n(y,g_n)+\lambda\psi(\| g_n\|_{\mathcal{H}_K})-(R_y(g)+\lambda\psi(\| g\|_{\mathcal{H}_K}))\right|\le \label{dis1}\\
 & &\le\left|R_n(y,g)-R_y(g)\right|+\left|R_n(y,g_n)-R_n(y,g)\right|+ \label{dis2}\\
 & & + \lambda \left| \psi(\| g_n\|_{\mathcal{H}_K})-\psi(\| g\|_{\mathcal{H}_K})\right| \label{dis3}
\end{eqnarray}
The first term in eq. (\cref{dis2}) converges to 0, as $n\to+\infty$ for the definition of the Riemann integral and for the density of the points $x_i$ in $\mathcal{X}$. In hypothesis $\psi$ continuous the term in eq. (\cref{dis3}) converges to 0, as $n\to+\infty$.  Now we prove the second term in eq. (\cref{dis2}) converges to 0.
Under assumption \cref{assumption1} we observe that $\| K_{x_i}\|_{\mathcal{H}_K}\le c,$ $\forall$ $x_i$, where $c$ is a fixed constant. By using the Lipschitz hypothesis of $V$, the reproducing property of the r.k. $K$ we conclude with thanks to the following inequalities
\begin{eqnarray}
\left|R_n(y,g_n)-R_n(y,g)\right| & \le &\frac{1}{n}\sum_{i=1}^{n} \left|V(g_n(x_i),y_i)-V(g(x_i,y_i))\right|\le\\
&\le &\frac{1}{n}\sum_{i=1}^{n} L |g_n(x_i)-g(x_i)|\le\\
%\frac{1}{n}\sum_{i=1}^{n} L |<g_n-g,K_{x_i}>_{\mathcal{H}_K}|\le \\
& \le & \frac{1}{n}\sum_{i=1}^{n} L \| g_n-g\|_{\mathcal{H}_K}\| K_{x_i}\|_{\mathcal{H}_K}\le L c \| g_n-g\|_{\mathcal{H}_K},
\end{eqnarray}
where $L$ is the Lipschitz constant of $V$.
Therefore for each sequence $(g_n)_n$ converging to $g$ exists $\lim_{n\to+\infty} R_n(y,g_n)+\lambda\psi(\| g_n\|_{\mathcal{H}_K})=R_y(g)+\lambda\psi(\| g\|_{\mathcal{H}_K})$, so $(R_n+\lambda\psi(\|\cdot\|_{\mathcal{H}_K}))_n$ $\Gamma$-converges to $R+\lambda\psi(\|\cdot\|_{\mathcal{H}_K})$. The hypothesis of the fundamental theorem of $\Gamma$-convergence hold and so the minimum of $R_n+\lambda\psi(\|\cdot\|_{\mathcal{H}_K})$ converges to the minimum of $R+\lambda\psi(\|\cdot\|_{\mathcal{H}_K})$.
\fi

\ifdefined\DEBUG
% Now we prove that the sequence $\hat{g}_{R,\lambda}^{(n)}$ defined in eq. (\cref{g_n}) is convergent. For the representer theorem reminded in section..., the solution $\hat{g}^{(n)}_{R,\lambda}$ admits the following representation
% \begin{eqnarray}
% \hat{g}^{(n)}_R = \sum_{j=1}^{n}\alpha_j K_{x_j},
% \end{eqnarray}
% where $\alpha_j\in\mathbb{R}$, i.e. $\hat{g}^{(n)}_R=W^{(n)}\tilde{g}$, where $W^{(n)}$ is the projector in $\mathcal{H}_K^{(n)}:=span\{K_{x_1},\dots, K_{x_n}\}$ and $\tilde{g}\in\mathcal{H}_K$. Since the sequence $\mathcal{H}_K^{(n)}$ is an encapsulated sequence whose union is dense in $\mathcal{H}_K$ then $\hat{g}^{(n)}_{R,\lambda}\to \tilde{g}$. Thanks to the the fundamental theorem of $\Gamma$-convergence we conclude that $\hat{g}^{(n)}_{R,\lambda}\to \hat{g}_{R,\lambda}$, as $n\to+\infty$. 
Finally to have the thesis $\hat{f}_{R,\lambda}^{(n)}\to \hat{f}_{R,\lambda}$, as $n\to+\infty$ we use the correspondence of the solution $\hat{g}^{(n)}_{R,\lambda}=A\hat{f}^{(n)}_{R,\lambda}$ and $\hat{g}_{R,\lambda}=A \hat{f}_{R,\lambda}$ and the following equality
\begin{eqnarray}
\| \hat{g}^{(n)}_{R,\lambda}- \hat{g}_{R,\lambda}\|_{\mathcal{H}_K}=\| \hat{f}^{(n)}_{R,\lambda}-\hat{f}_{R,\lambda}\|_{\mathcal{H}_1}.
\end{eqnarray}
Furthermore if $K_{x_1},\dots,K_{x_n}$ are linearly independent and we define with $\mathcal{H}_1^{(n)}:=\tilde{A}^{-1}\mathcal{H}_K^{(n)}=A^*\mathcal{H}_K^{(n)}\subset Ker(A)^{\perp},$ where $A^*$ is the adjoint from $\mathcal{H}_K$ to $\mathcal{H}_1$, we have that $\mathcal{H}_1^{(n)}=span\{\phi_{x_1},\dots,\phi_{x_n}\}$, where $\phi_{x_i}=\tilde{A}^{-1} K_{x_i}= A^* K_{x_i}$. Since $K_{x_i}$ are linearly independent, then $\phi_{x_i}$ are linearly independent. Thanks to the representer theorem
\begin{eqnarray}
\hat{f}^{(n)}_{R,\lambda}=\sum_{j=1}^{n} \alpha_j \phi_{x_j},
\end{eqnarray}
with $\alpha_j\in\mathbb{R}$, i.e. $\hat{f}^{(n)}_{R,\lambda}=T^{(n)}\tilde{f}$, where $T^{(n)}$ is the projector onto $\mathcal{H}_1^{(n)}$ and $\tilde{f}\in Ker(A)^{\perp}$. Since $\mathcal{H}_1^{(n)}$ is an encapsulated sequence of sets $\mathcal{H}_1^{(n)}\subset\mathcal{H}_1^{(n+1)},$ $\forall$ $n\in\mathbb{N}$ and $\bigcup_{n\in\mathbb{N}} \mathcal{H}_1^{(n)}$ is dense in $Ker(A)^{\perp}$ (detto da Engl) then $\hat{f}^{(n)}_{R,\lambda}\to \tilde{f}$. For the uniqueness of limit $\tilde{f}=f^{\dagger}_R$, then $\hat{f}^{(n)}_{R,\lambda}=T^{(n)} \hat{f}_{R,\lambda}$. 
\end{proof}

TEOREMA.
$g_{R,\lambda} \to g_R$ PER $\lambda \to 0$.

Proof: 
\fi
% E' SUFFICIENTE OSSERVARE CHE LA SUCCESSIONE $R^{(n)}$ E' EQUICOERCIVA DATO CHE HA LOWER BOUND COERCIVO $R^{(1)}$.
% SI CONCLUDE DAL TEOREMA 4.1.1 DI BRAIDES.\\
% %\end{proof}

% COROLLARIO. IL TEOREMA VALE ANCHE PER 
% $\lambda =0$.\\

\ifdefined\DEBUG
$g^{n}_{R,\lambda} \to g^{n}_R$ PER $\lambda \to 0$.\\

TEOREMA: $g_n^\dagger \to g^\dagger$.\\
Proof: tripla divisione della norma con cauchy schwartz.
$$\|g^{n}_R - g_R\| \leq \|g^{n}_R - g^{n}_{R,\lambda}\| + \|g^{n}_{R,\lambda} - g_{R,\lambda}\| + \|g_{R,\lambda} - g_R\|$$
Limite su $n$ e $\lambda$ sufficientemente piccolo dato che $\|g^{n}_R - g_R\| = \lim_{\lambda \to 0} \|g^{n}_R - g_R\| $ .\\
\fi

%COROLLARIO: vale anche per i metodi di Tikhonov. Tutto uguale con l'aggiunta di lambda |g-gn|.\\
%Commento al corollario: in questo caso vale anche il teorema di rappresentazione per cui la regolarizzazione per proiezioni (per n che aumenta) aggiunge un elemento della base alla volta, lasciando invariato i precedenti elementi. Questo grazie alla regolarizzazione data da Omega (monotono crescente)\\

\ifdefined\REGULARIZATION
\subsection{Projection associated with sampling $S_{\nu,\vartheta}^{(n)}$}
\cref{representer-theorem-f} suggests that the solution of the discretized inverse problem \cref{discretized inv probl} could be seen as solution of the infinite dimensional problem \cref{Af=y} projected onto the finite dimensional space $\mathcal H_K^{(n)}$. 
Now we show that the finite dimensional solution can always be interpreted as a projection of the infinite dimensional one.
%interpret the data discretization as a the regularization of infinite dimensional solutions by means of a projection onto suitable finite dimensional subspaces.
%We show in the following how the problem presented in equation .. can be dealt in the framework of 
``
This leads to an extension of the dual least square method as presented in \cite{engl_regularization_1996} to the case of $R$ is different from the least-squares functional.'' % and $\psi$ is a non decreasing function.
Consider the sequence of finite dimensional subspaces
\begin{eqnarray}
\mathcal{H}_K^{(1)}\subset\mathcal{H}_K^{(2)}\subset\dots\subset\mathcal{H}_K^{(n)},
\end{eqnarray}
where $\mathcal{H}_K^{(n)}\subset\mathcal{H}_K=\Im(A)$ is defined in equation \cref{H_K^n}. 
We assume that the points $x_i$ are distinct and $K_{x_i}$ are linearly independent of each others. 
Such an assumption is satisfied when $K$ is a strictly positive definite kernel [ ].
%We consider the problem in equation (\cref{discretized inv probl2}), which can be interpreted as the projection of problem \cref{} on $\mathcal{H}_K^{(n)}$ in the following sense
%\begin{eqnarray}<Af,K_{x_i}>_{\mathcal{H}_K}=<y,K_{x_i}>_{\mathcal{H}_K}. \end{eqnarray}
%We define the following projection.
Moreover, we introduce the projection $Q_R: \mathcal H_2 \to \mathcal{H}_K$ %\overline{\Im(A)} 
which maps $y \in \mathcal H_2$ in $h \in \mathcal{H}_K$ %$h \in \overline{\Im(A)}$
given by
\begin{equation}
h:=\arg\min_{h'\in\mathcal{H}_K} R_y(h').
\end{equation}
We consider that assumptions on $R_y$ (see \cref{Approximation problems in RKHS}) hold true for each $y \in \mathcal H_2$, so that the projection exists and it is uniquely defined.
Moreover, $h = Q_R(h)$ when $h \in \mathcal{H}_K$ and then it is an involution operator, i.e. $Q_R^2 = Q_R$.
Given $y \in \mathcal{H}_2$, we consider a map $s:\mathcal{H}_K \to \mathcal{H}_2$ such that $Q_R \circ s = \mathrm{Id}$ and $s(g_R) = y$, where $g_R$ is defined in \cref{approx_problem}.
From a topological point of view this map is a section of a  fiber bundle $(\mathcal H_2 , \mathcal{H}_K, Q_R , \mathbb{R})$.
For each $h \in \mathcal{H}_K$ we define the operator
\begin{eqnarray}
W_R^{(n)}(h):=\arg\min_{g\in\mathcal{H}_K} R_{\mathcal{Z}_n(h)}(g),
\end{eqnarray}
where
\begin{eqnarray}
\mathcal{Z}_n(h):=\{(x_i,(S^{(n)}_{\bar{x},\vartheta}(s(h)))_i)\}_{i=1}^{n} ~.
\end{eqnarray}
where $S^{(n)}_{\bar{x},\vartheta}$ is the sampling operator defined in \cref{sampling operator}.
%It is readily to prove that $W_R$ is a projection. 
Clearly, $W_R^{(n)}$ depends on the choice of the section $s$.
In the case of $R_y$ is the least squares functional we have that the projection is the orthogonal projection for each section $s$, and this construction reduces to the usual regularization by projection defined in \cite{engl_regularization_1996}.
%$h=\arg\min_{g\in\mathcal{H}_K} R(g)$, where we recall that $R(g)$ measures the discrepancy between $g$ and $y$ through a general divergence $R$ (see section ..). Formally we fix a section $s$ to choose $y$,
% by instance $s$ can be defined as $s(h)=\arg\min_{y} R_y(h)$. 
%and then we fix $y_i:=y(x_i)$ and the set $\mathcal{Z}_n$ defined as in equation (\cref{samples}), the projection has the following definition
Indeed, by construction of $W_R^{(n)}$ we have
\begin{equation}
\label{projection-g}
\hat{g}^{(n)}_R=W^{(n)}_R(g_{R_{y}}) ~ .%~ \quad \text{and}
%\quad
%\hat{g}^{(n)}_{R,\lambda}=W^{(n)}_{R+\lambda \psi} (\hat g_{R,\lambda}).
\end{equation}
Thanks  to the representer theorem $W^{(n)}_R$ is a projection onto $\mathcal{H}_K^{(n)}$. (!!!NOTA: sto ragionando su questa parte e per poter dire che $W^{(n)}_R$ e' una proiezione bisogna almeno far vedere che preso un elemento di $h\in\mathcal{H}^{(n)}_K$ allora $W^{(n)}_R(h)=h$. Il problema e' che la definizione e' legata a $y$. Quindi per un elemento generico non funziona.)
This allows us to define the following projection
\begin{equation}
P^{(n)}_R:=\tilde{A}^{-1}W^{(n)}_R A
\end{equation}
for which we have
\begin{equation}
\label{projection-f}
(\hat{f}^{(n)}_R)^{\dagger}=P_R^{(n)} (f^{\dagger}_{R_{y}})
%\quad
%\text{and}
%\quad
%\hat{f}^{(n)}_{R,\lambda} = P_{R+\lambda\psi}^{(n)} (\hat f_{R,\lambda})~.
\end{equation}
Therefore, $P^{(n)}_R$ is a projection onto the finite dimensional subspace 
$$
\tilde{A}^{-1}\mathcal{H}_K^{(n)}=A^*\mathcal{H}_K^{(n)}\subset Ker(A)^{\perp},
$$
where $A^*$ is the adjoint from $\mathcal{H}_K$ to $\mathcal{H}_1$ and we have $\tilde{A}^{-1}\mathcal{H}_K^{(n)}=\mathcal{H}_1^{(n)}$ (see equation \cref{H_1^n}).
% where $\phi_{x_i}=\tilde{A}^{-1} K_{x_i}= A^* K_{x_i}$. 
Since $K_{x_i}$ are linearly independent, then $\phi_{x_i}$ are linearly independent and this is in agreement with $(\hat{f}^{(n)}_R)^{\dagger}%\hat{f}^{(n)}_{R,\lambda}
\in\mathcal{H}_1^{(n)}$.
%We remark that in the classical case of least square solution, where $R$ is the norm in the Hilbert space of the residual, the projection $P^{(n)}$ is the orthogonal projection onto $\mathcal{H}_1^{(n)}$.

% \cref{corollary g_n} extends the convergence result of Theorem 3.24 in \cite{engl_regularization_1996} to the case of $R$-generalized solutions.
% Indeed, in the classical formulation the convergence holds just when $R$ is the least-squares functional.
% Moreover, we proved that such a convergence holds also when some Tikhonov-type regularization is applied, provided that some mild conditions on $R$ and $\psi$ are assumed.

Therefore, the equivalence between problems \cref{approx_problem} and \cref{Af=y} coupled with the representer theorem provides an extension of the concept of regularization by projection 
%in the deterministic discretization setting, 
to the case of $R_y$-generalized solutions.
%when $R$ is different from the least-squares functional and the projection spaces are taken according to the given set of samples in the data space.
%The representer theorem relates the discretization of $\mathcal{H}_2$ elements, i.e. $S^{(n)}_{\bar x}$, to the projection of $\mathcal{H}_1$ elements, i.e. $P^{(n)}_R$.

% Moreover, we remark that Tikhonov-type regularized solutions we obtain a regularization by projection method: $\hat{g}_{R,\lambda}^{(n)}$ and $\hat{f}_{R,\lambda}^{(n)}$ (see equations \cref{approx_probl_sampling_noisy} and \cref{inv_probl_sampling_noisy}) can be seen as projected solutions of $\hat{g}_{R,\lambda}$ and $\hat{f}_{R,\lambda}$ (see equations \cref{approx_probl_reg}) and \cref{inv_probl_reg}). 

% Moreover, by applying the collocation method to Tikhonov-type regularized solutions we obtain a regularization by projection method: $\hat{g}_{R,\lambda}^{(n)}$ and $\hat{f}_{R,\lambda}^{(n)}$ (see equations \cref{approx_probl_sampling_noisy} and \cref{inv_probl_sampling_noisy}) can be seen as projected solutions of $\hat{g}_{R,\lambda}$ and $\hat{f}_{R,\lambda}$ (see equations \cref{approx_probl_reg}) and \cref{inv_probl_reg}). 
% % where the projection has to be defined similarly to the projection $W_R^{(n)}$ and $P^{(n)}_R$. 

Furthermore, we remark that the same construction of the projections allows us to extend the regularization by projection also in a statistical setting: when we consider $R_{\rho}$ in place of $R_y$ we obtain 
\begin{equation}
\hat{g}^{(n)}_R = W^{(n)}_R (g_{\rho}) ~ \text{ and } ~ (\hat{f}^{(n)}_R)^{\dagger} = P^{(n)}_R (f^{\dagger}_{R_{\rho}}),
\end{equation}
which is in accordance with the representer theorem, and the convergence of the solutions has to be considered in probability (see \cref{statistical setting}). 
This also extends Theorem 3.4 in \cite{engl_regularization_1996} to the statistical framework.

\begin{remark}
The representer theorem and a construction of projections similar to the above one allow us to write the Tikhonov-type regularized solutions of the discretized problems as projections of the Tikhonov-type regularized solutions of the infinite-dimensional problems. By replacing $R_{(\cdot)}$ with $R_{(\cdot)}+\lambda\psi(\Vert\cdot\Vert)$ and $R_{\mathcal{Z}_n(h)}$ with $R_{\mathcal{Z}_n(h)}+\lambda\psi(\Vert\cdot\Vert)$  (where $R_{(\cdot)}$ allows us to consider $R_y$ or $R_{\rho}$ according to the type of the discretization of the problem) we obtain that $\hat{g}^{(n)}_{R,\lambda}=W^{(n)}_{R+\lambda \psi} (\hat g_{R_{(\cdot)},\lambda})$ and $\hat{f}^{(n)}_{R,\lambda} = P_{R+\lambda\psi}^{(n)} (\hat f_{R_{(\cdot)},\lambda})$.
\end{remark}
\fi

\section{Connection between convergence rates}
The study of the convergence rates is carried out in parallel in different settings as in (inverse) learning problems \cite{bauer2007regularization,blanchard2017optimal}, statistical inverse problems \cite{bissantz2007convergence}, inverse problems with deterministic noise \cite{albani2016optimal,engl_regularization_1996}.
In this section we focus on the convergence rates provided in the statistical learning setting and in the linear inverse problems with deterministic noise. The crucial difference  between these two approaches lies in the independent variable which the error depends on.
Whereas for learning problems the independent variable is the number of examples $n$, for inverse problems it is the noise level $\delta$ dealing with infinite dimensional noisy data. The relation between the optimal rates provided in these two settings under the same source condition is not straightforward \cite{engl_regularization_1996,bauer2007regularization}. It is evident that there is no transformation between $n$
and $\delta$ (independently of the rate) mapping one rate to the other.
The aim of this section is to find  a relation between the two rates and to quantify their difference. To do this we introduce an estimator in the statistical learning setting which is different from the usual one, with
% We use such an estimator as an instrument to quantify the difference between the convergence rates.
the following properties.
\begin{itemize}
\item The expected error given by this estimator is always larger than the error given by the standard spectral regularized solution provided that a suitable relation between $n$ and $\delta$ holds true. 
%It holds an inequality between its expected error as a function of $n$
%computed with respect to the sought solution 
%and the error provided by the standard spectral regularized solution as a function of $\delta$.
% a deterministic noisy infinite dimensional data and the sought solution. %(it will be shown in \cref{prop error}) 
Such an inequality allows to convert upper convergence rates depending on $n$ to upper convergence rates depending on $\delta$ and viceversa, lower convergence rates depending on $\delta$ to lower convergence rates depending on $n$ (\cref{subsection conversion rates}).
\item It has the same upper rates of the spectral regularization methods \cite{blanchard2017optimal} (\cref{subsection rate of our estimator}). 
%\item its expected value can be explicitly written in terms of a bias-variance decomposition
\end{itemize}

\subsection{A link between the number of examples $n$ and the noise level $\delta$}\label{subsection conversion rates}

Let $L^\lambda$ be a linear regularization operator family by varying of $\lambda>0$ from an Hilbert space $\mathcal{H}_2$ to another $\mathcal{H}_1$ \cite{engl_regularization_1996}.
%Henceforth, in order to explicitly compute the error by means of integrals, w
Let $(\mathcal{T},\Theta, \mu)$ be a measure space 
with respect to the measure $\mu$ on $(\mathcal{T},\Theta)$ where $\mathcal{T}$ is a nonempty set and $\Theta$ is a $\sigma$-algebra and $(\mathcal{X}, \Sigma,\nu)$ be a measure space with respect to the measure $\nu$ on $(\mathcal{X},\Sigma)$, where $\mathcal{X}$ is a nonempty set and $\Sigma$ is a $\sigma$-algebra. We assume that $\nu$ is a positive and finite measure. We suppose that $\mathcal{H}_1$ is the $L^2(\mathcal{T},\mu)$ space (the Hilbert space of square integrable functions on $\mathcal{T}$ with respect to the measure $\mu$) and $\mathcal{H}_2$ is the $L^2(\mathcal{X},\nu)$ space (the Hilbert space of square integrable functions on $\mathcal{X}$ with respect to the measure $\nu$).
We assume that $L^{\lambda}$ has the following form
\begin{equation}\label{L_lambda}
L^{\lambda}y=\int_{\mathcal{X}} \ell^{\lambda}_x ~ y(x) ~ d\nu(x)~,
\end{equation}
where $\ell^\lambda_x \in \mathcal H_1$, $\ell_x^{\lambda}(t):=\ell^{\lambda}(x,t)$ and $\ell^{\lambda}(\cdot,t)\in\mathcal{H}_2$ for each $x \in \mathcal X$ and for each $t\in\mathcal{T}$. Thanks to this last assumption the integral in equation \cref{L_lambda} is finite. Moreover, we assume $\sup_{t\in\mathcal{T}}\Vert\ell^{\lambda}(\cdot,t)\Vert_{\mathcal{H}_2}<\infty$. Such an assumption implies that $L^{\lambda}$ is uniformly bounded and then for each $y\in\mathcal{H}_2$ $L^{\lambda} y$ is bounded in supremum norm which assures that $L^{\lambda}y\in\mathcal{H}_1$.   
% Furthermore we set $\mathcal{H}_2$ as the $L^2(\mathcal{X},\nu)$ space, where $(\mathcal{X},\nu)$ is a probability space and we assume $\ell_x^{\lambda}(t)$  
%e.g. $L^\lambda = (A^*A +\lambda I)^{-1} A^*$ the Tikhonov regularization operator.
%For a discretized problem $Af (x_i) = y(x_i)$ , we consider the discretized operator $A^{(n)} = S^{(n)}_{\nu, \Omega} A$. 
% The corresponding discretized Tikhonov method $L^\lambda_n : \mathcal{Y}^n \to \mathcal H_1$ takes the form
% \begin{equation}
% L^{\lambda}_n = ( (A^{(n)})^*A^{(n)} +\lambda I)^{-1} (A^{(n)})^* ~ .
% \end{equation}
We denote with $f^{\lambda}$ the regularized solution given by the linear regularization operator $L^{\lambda}$ applied to the noise free data $y$, i.e.
\begin{equation}\label{L lambda y}
f^{\lambda} = L^{\lambda} y,
\end{equation}
and with $f^{\lambda}_{\delta}$ the regularized solution given by the noisy data $y^{\delta}$, i.e.
\begin{equation}\label{solution f lambda delta}
f^{\lambda}_{\delta} = L^{\lambda} y^{\delta},
\end{equation}
when $\Vert y-y^{\delta}\Vert\le\delta$.
We introduce the following estimator computed from a set of discrete data as follows 
\begin{equation}
\label{discretized-linear-regularization}
\hat{f}^{\lambda}_n = L^{\lambda}_{\textbf{x}} \textbf{y} = \frac{1}{n} \sum_{i=1}^n \ell^\lambda_{X_i} ~ Y_i
\end{equation}
where $\textbf{x} = (X_1,\ldots,X_n)$ and $\textbf{y}=(Y_1,\dots,Y_n)$ denote 
%$\textbf{X}^{n} = (X_1,\ldots,X_n)$ and $\textbf{Y}^{n}=(Y_1,\dots,Y_n)$ denote 
the samples.
%Inverse learning setting.\\

% We remark that the linear regularization methods based on spectral theory can fall down in the above setting. 
Convergence rates are usually studied for linear regularization methods based on spectral theory. Now we introduce the standard regularization operator used in the spectral theory in our notation. We denote with $s_{\lambda}$ the regularization function. Then the regularized solution $f^{\lambda}$ is given by
\begin{equation}
\label{f lambda spectral theory}
f^{\lambda} = s_{\lambda}(A^*A) A^* y,
\end{equation}
where $A^*$ is the adjoint operator of $A$. With straightforward computations the solution \cref{f lambda spectral theory} can be re-written in the form \cref{L lambda y} by setting
\begin{equation}\label{ell in sp}
\ell_x^{\lambda} = s_{\lambda}(A^*A) \phi_{x},
\end{equation}
with $x\in\mathcal{X}$. We remark that the required hypotheses on $\ell_x^{\lambda}$, 
%i.e. for each $x\in\mathcal{X}$ $\ell_x^{\lambda}\in\mathcal{H}_1$ and $\sup_{t\in\mathcal{T}}\Vert \ell^{\lambda}(\cdot,t)\Vert_{\mathcal{H}_2}<\infty$, 
are satisfied under the assumption $\sup_{t\in\mathcal{T}}\Vert\phi(\cdot,t)\Vert_{\mathcal{H}_2}<\infty$. 
%where we have used the property $s_{\lambda}(A^* A)A^*=A^* s_{\lambda}(A A^*)$, which holds for bounded operators $A$ (see \cref{assumption1}) and measurable functions $s_{\lambda}$ on the spectrum of $A$. 
Furthermore, the estimator defined in \cref{discretized-linear-regularization} takes the following form
\begin{equation}\label{hat f}
\hat{f}^{\lambda}_n = s_{\lambda}(A^*A)A^*_{\textbf{x}}\textbf{y},
\end{equation}
where $A_{\textbf{x}}$ is the sampling operator 
associated to the set of samples, which is defined as $A_{\textbf{x}}:\mathcal{H}_1\to \mathbb{R}^n$
\begin{equation*}
(A_{\textbf{x}} f)_j = <f,\phi_{X_j}>_{\mathcal{H}_1},
\end{equation*}
$\forall$ $j=1,\dots, n$ and $A_{\textbf{x}}^*$ is its adjoint operator given by 
\begin{equation*}
A^*_{\textbf{x}}\textbf{y} = \frac{1}{n}\sum_{j=1}^{n} Y_j \phi_{X_j}.
\end{equation*}
We want to highlight the difference with the usual statistical learning estimator and we denote the latter with 
\begin{equation}\label{stat learning estimator}
\hat{f}^{\lambda}_{n, \rm learn} = s_{\lambda}(A_{\textbf{x}}^* A_{\textbf{x}}) A_{\textbf{x}}^* \textbf{y}.
\end{equation}

Along the lines of inverse learning problems, we make the following assumptions.
\begin{assumption}\label{assumption statistical learning}
The $n$ observations $(X_i,Y_i)$ are i.i.d. drawn from a probability distribution $\rho$ on a Borel space $\mathcal{X}\times \mathcal{Y}$ so that $\nu$ is the marginal distribution of $X$ (see \cref{section learning}).
We assume 
%\begin{itemize}
%\item[$i$)] 
that the conditional expectation with respect to $\rho(\cdot|\cdot)$ of $Y$ given $X$ is equal to
\begin{equation}
\label{rho_expectation}
\mathbb{E}(Y| X=x)=Af^{\dagger}(x)=y(x)
\end{equation}for $\nu$-almost $x\in\mathcal{X}$, where $f^{\dagger}\in\mathcal{H}_1$ is the generalized solution.
%\item[$ii$)] 
We assume also that the variance of the conditional probability is
\begin{equation}
\label{rho_variance}
Var(Y|X=x)=\sigma^2
\end{equation}  
for $\nu$-almost $x\in\mathcal{X}$, where $\sigma$ is a constant.
\end{assumption}
Now we are in the position to prove a first inequality between the expected error provided by $\hat{f}^{n}_{\lambda}$ as a function of $n$ and the error provided by $f^{\lambda}_{\delta}$. 
% Before showing the desired inequality promised in the first property of the estimator $\hat{f}^{\lambda}_n$ we give the following lemma.  
In what follows, to make it easier the writing, we do not write the subscript of the norms and we denote with $\mathbb{E}$ the mean computed with respect to the measure $\rho^{\otimes n}$ .
\begin{lemma}
\label{expected-reconstruction-error}
Let $\hat{f}_n^\lambda$ be defined in equation \cref{discretized-linear-regularization}. Under \cref{assumption statistical learning} we have
\begin{equation}\label{reconstruction error}
\mathbb E (\| \hat{f}_n^\lambda - f^\dagger \|^2) \ge \frac{\sigma^2}{n} 
\|L^\lambda\|_{HS}^2
%\int_{\mathcal X \times \mathcal T} (R^\lambda_x(t))^2 \nu(dx) ~ dt 
+ \| f^\lambda - f^\dagger \|^2,
\end{equation}
where $\| \cdot \|_{HS}$ denotes the Hilbert Schmidt norm.
% The expected estimation error provided by using $n$ samples i.i.d. drawn from a probability distribution $\rho$ is
% \begin{equation}\label{reconstruction error}
% \mathbb E_{\rho^{\otimes n}} (\| \hat{f}_n^\lambda - f^\dagger \|^2) \ge \frac{\sigma^2}{n} 
% \|L^\lambda\|_{HS}^2
% %\int_{\mathcal X \times \mathcal T} (R^\lambda_x(t))^2 \nu(dx) ~ dt 
% + \| f^\lambda - f^\dagger \|^2,
% \end{equation}
% where $\| \cdot \|_{HS}$ denotes the Hilbert Schmidt norm.
\end{lemma}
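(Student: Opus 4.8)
The plan is to establish \cref{reconstruction error} by a bias--variance decomposition of the mean squared error of $\hat f^\lambda_n$ about its own expectation, after first identifying that expectation with the noise-free regularized solution $f^\lambda$. Conditioning each summand of \cref{discretized-linear-regularization} on its design point and invoking \cref{rho_expectation}, together with the fact that the pairs $(X_i,Y_i)$ are i.i.d.\ (\cref{assumption statistical learning}), I would first compute
\[
\mathbb{E}(\hat f^\lambda_n) = \mathbb{E}(\ell^\lambda_{X_1} Y_1) = \int_{\mathcal X} \ell^\lambda_x \, \mathbb{E}(Y\mid X=x)\, d\nu(x) = \int_{\mathcal X} \ell^\lambda_x\, y(x)\, d\nu(x) = f^\lambda,
\]
the last equality being the definition \cref{L lambda y} of $f^\lambda = L^\lambda y$. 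Thus $\hat f^\lambda_n$ is unbiased for $f^\lambda$. With this in hand I would split the error orthogonally: since $f^\lambda - f^\dagger$ is deterministic and $\mathbb{E}(\hat f^\lambda_n - f^\lambda)=0$, the cross term vanishes and
\[
\mathbb{E}(\|\hat f^\lambda_n - f^\dagger\|^2) = \mathbb{E}(\|\hat f^\lambda_n - f^\lambda\|^2) + \|f^\lambda - f^\dagger\|^2 .
\]
This produces the second summand of \cref{reconstruction error} exactly, so the task reduces to bounding the variance term $\mathbb{E}(\|\hat f^\lambda_n - f^\lambda\|^2)$ from below by $\tfrac{\sigma^2}{n}\|L^\lambda\|_{HS}^2$.

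For the variance term I would write $\hat f^\lambda_n - f^\lambda = \tfrac1n\sum_{i=1}^n(\ell^\lambda_{X_i}Y_i - f^\lambda)$ as an average of i.i.d.\ centred $\mathcal{H}_1$-valued random elements; expanding the squared norm and dropping the off-diagonal inner products, which vanish by independence, gives $\mathbb{E}(\|\hat f^\lambda_n - f^\lambda\|^2) = \tfrac1n(\mathbb{E}(\|\ell^\lambda_{X_1}Y_1\|^2) - \|f^\lambda\|^2)$. Conditioning on $X_1$ and using $\mathbb{E}(Y^2\mid X=x)=\sigma^2+y(x)^2$, which follows from \cref{rho_expectation,rho_variance}, together with the fact that $Y_1$ is scalar so $\|\ell^\lambda_{X_1}Y_1\|^2 = Y_1^2\|\ell^\lambda_{X_1}\|^2$, I would obtain
\[
\mathbb{E}(\|\ell^\lambda_{X_1}Y_1\|^2) = \sigma^2\int_{\mathcal X}\|\ell^\lambda_x\|^2\, d\nu(x) + \int_{\mathcal X}\|\ell^\lambda_x\|^2\, y(x)^2\, d\nu(x).
\]
Here the first integral is exactly $\|L^\lambda\|_{HS}^2$: by Tonelli and the kernel representation $\ell^\lambda_x(t)=\ell^\lambda(x,t)$ one has $\int_{\mathcal X}\|\ell^\lambda_x\|^2\,d\nu(x)=\int_{\mathcal X}\int_{\mathcal T}\ell^\lambda(x,t)^2\,d\mu(t)\,d\nu(x)$, which is the squared $L^2(\nu\otimes\mu)$-norm of the kernel of the Hilbert--Schmidt operator $L^\lambda$.

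Collecting terms, the variance equals $\tfrac{\sigma^2}{n}\|L^\lambda\|_{HS}^2$ plus the remainder $\tfrac1n\bigl(\int_{\mathcal X}\|\ell^\lambda_x\|^2 y(x)^2\,d\nu(x) - \|f^\lambda\|^2\bigr)$, so the whole lemma reduces to showing this remainder is nonnegative, which I expect to be the crux. Viewing $f^\lambda=\int_{\mathcal X}\ell^\lambda_x\,y(x)\,d\nu(x)$ as a Bochner integral and recalling from \cref{assumption statistical learning} that $\nu$ is a probability measure, Jensen's inequality (equivalently Cauchy--Schwarz) yields
\[
\|f^\lambda\|^2 = \left\|\int_{\mathcal X}\ell^\lambda_x\,y(x)\,d\nu(x)\right\|^2 \le \int_{\mathcal X}\|\ell^\lambda_x\,y(x)\|^2\,d\nu(x) = \int_{\mathcal X}\|\ell^\lambda_x\|^2\,y(x)^2\,d\nu(x),
\]
so the remainder is nonnegative and \cref{reconstruction error} follows. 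The points needing care are the justifications of the conditioning and Fubini--Tonelli exchanges, covered by the standing integrability hypotheses on $\ell^\lambda$ and the finiteness of $\|L^\lambda\|_{HS}$, and the normalization of $\nu$, without which the Jensen step, and hence the sign of the remainder, would fail.
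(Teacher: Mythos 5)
Your proposal is correct and follows essentially the same route as the paper's proof: both expand the squared error using independence and the conditional moment assumptions \cref{rho_expectation,rho_variance}, identify the $\frac{\sigma^2}{n}\|L^\lambda\|_{HS}^2$ term from the diagonal variance contributions, and reduce the inequality to the Jensen/Cauchy--Schwarz bound $\|f^\lambda\|^2 \le \int_{\mathcal X}\|\ell^\lambda_x\|^2 y(x)^2\,d\nu(x)$, which relies on $\nu$ being a probability measure. The only difference is presentational: you organize the computation as a bias--variance decomposition in the Hilbert space, while the paper carries out the same expansion pointwise in $t$ and integrates over $\mathcal T$ at the end.
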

% \begin{eqnarray}
% \int_{\mathcal{Z} \times \mathcal{T}} (L^\lambda_n z_n -f)^2 d\rho dt
% \end{eqnarray}
% Where $z_n=\{(x,y)_i\}_{i=1}^n$.
\begin{proof}
Denote with $\epsilon_n$ the difference between the estimate $\hat f^\lambda_n$ obtained with  $n$ samples and the sought solution $f^{\dagger}$. 
For any $t\in\mathcal{T}$ we have
\begin{eqnarray}
\epsilon^2_n(t) &=& \left( \frac{1}{n} \sum_{i=1}^n \ell^\lambda_{X_i}(t) Y_i - f^{\dagger}(t) \right)^2  \nonumber \\ 
&=& \frac{1}{n^2} \sum_{i,j=1}^{n} \ell^\lambda_{X_i}(t) Y_i \ell^\lambda_{X_j}(t) Y_j - \frac{2}{n} f^{\dagger}(t) \sum_{i=1}^n \ell_{X_i}(t) Y_i+ (f^{\dagger}(t))^2
\end{eqnarray}
By integrating over $\mathcal{Y}^n$, we get
\begin{eqnarray}
\int_{\mathcal{Y}^n} \epsilon^2_n(t) ~  d\rho(\cdot|\cdot)^{\otimes n}  &=&
\frac{1}{n^2} \sum_{i=1}^{n} (\ell_{X_i}^\lambda(t))^2 \sigma^2 + \frac{1}{n^2} \sum_{i,j=1}^{n} \ell^\lambda_{X_i}(t) \ell^\lambda_{X_j}(t) ~  y(X_i) y(X_j)  \nonumber \\
& & - \frac{2}{n} ~ f^{\dagger}(t) \sum_{i=1}^{n} \ell_{X_i}^\lambda(t) y(X_i) + (f^{\dagger}(t))^2,
\end{eqnarray}
where $d\rho(\cdot|\cdot)^{\otimes n}=d\rho(Y_1|X_1) \cdots d\rho(Y_n|X_n)$ and by using that $\rho(\cdot|\cdot)$ is a probability measure on $\mathcal{Y}$. 
%and we supposed that $\int_{\mathcal{Y}} d\rho(\cdot|\cdot) = 1$ without loss of generality.
Then, by integrating over $\mathcal{X}^n$ we obtain
\begin{eqnarray}
\int_{\mathcal{X}^n} \int_{\mathcal{Y}^n} \epsilon^2_n(t) ~ d\rho(\cdot|\cdot)^{\otimes n} d\nu^{\otimes n} 
&=&
\frac{\sigma^2}{n^2} \sum_{i=1}^{n} \int_{\mathcal{X}} (\ell^\lambda_{X_i}(t))^2 d\nu(X_i)  \nonumber \\
& & +
\frac{1}{n^2} \sum_{i=1}^{n^2-n} \left( \int_\mathcal{X} \ell^\lambda_{X_i}(t) y(X_i) d\nu(X_i) \right)^2\nonumber \\
& & +
\frac{1}{n^2} \sum_{i=1}^{n}  \int_\mathcal{X} \left(\ell^\lambda_{X_i}(t) y(X_i) \right)^2 d\nu(X_i) 
 \nonumber \\ & & - 
\frac{2}{n} f^{\dagger}(t) \sum_{i=1}^{n} \int_\mathcal{X} \ell^\lambda_{X_i}(t) y(X_i) d\nu(X_i) + (f^{\dagger}(t))^2  \nonumber \\
& \ge &
\frac{\sigma^2}{n^2} \sum_{i=1}^{n} \int_{\mathcal{X}} (\ell^\lambda_{X_i}(t))^2 d\nu(X_i)  \nonumber \\
& & +
\frac{1}{n^2} \sum_{i=1}^{n^2} \left( \int_\mathcal{X} \ell^\lambda_{X_i}(t) y(X_i) d\nu(X_i) \right)^2
 \nonumber \\ & & - 
\frac{2}{n} f^{\dagger}(t) \sum_{i=1}^{n} \int_\mathcal{X} \ell^\lambda_{X_i}(t) y(X_i) d\nu(X_i) + (f^{\dagger}(t))^2  \nonumber \\
&=&
\frac{\sigma^2}{n} \int_{\mathcal{X}} (\ell^\lambda_{X}(t))^2 d\nu(X)  \nonumber \\
& &
+ \left(f^\lambda(t)\right)^2  - 2 f^{\dagger}(t) f^\lambda(t) + (f^{\dagger}(t))^2 ~,
\end{eqnarray}
where we used that $\nu$ is a probability measure on $\mathcal{X}$.
%$\int_{\mathcal{X}} d\nu = 1$. 
Therefore, we have
\begin{eqnarray}
\mathbb{E} \left(\| \hat{f}^{\lambda}_n - f^{\dagger} \|^2\right) 
&\ge& 
\int_\mathcal{T}
\frac{\sigma^2}{n} \int_{\mathcal{X}} (\ell^{\lambda}_{X}(t))^2 ~ d\nu(X)
+
\left( f^\lambda(t) - f^{\dagger}(t) \right)^2  d\mu(t)
 \nonumber \\
&=& 
\frac{\sigma^2}{n} \| L^\lambda \|^2_{HS}
+ \| f^\lambda - f^{\dagger} \|^2 ~,
 \end{eqnarray}
as required.
\end{proof}

\begin{proposition}
\label{prop error}
Let us consider the inverse problem \cref{Af=y}, a family of linear regularization methods $L^\lambda$ and its discretized version \cref{discretized-linear-regularization}.
Then, for each $n \in \mathbb N$ there exists a function $\Delta(n,\lambda)$ such that for each $0<\delta\le \Delta(n,\lambda)$ and infinite dimensional noisy data $y^\delta$ such that $\| y^\delta - y \| \leq \delta$, the following inequality holds
 
\begin{equation}
\label{key}
\|f^\lambda_\delta - f^\dagger \|^2 \leq \mathbb E \left(\| \hat f^\lambda_n - f^\dagger \|^2\right) ~.
\end{equation}
% \begin{equation}\label{solution f lambda delta}
% f^\lambda_\delta : = L^\lambda y^\delta.
% \end{equation}
Moreover,
\begin{equation}\label{Delta(n-lambda)}
\Delta(n,\lambda) = \frac{1}{\sqrt{\frac{\sigma^2}{n}+\varepsilon(\lambda)^2}+\varepsilon(\lambda)} \frac{\sigma^2}{n} ~,
\end{equation}
where $\varepsilon(\lambda) = \| f^\lambda - f^\dagger \|  \|L^\lambda\|_{HS}^{-1}$. 

Conversely, for each $\delta>0$ there exists a function $N(\delta,\lambda)$ such that for each $n\in\mathbb{N}$ such that $n\le N(\delta,\lambda)$ the equation \cref{key} applies and
\begin{equation}
\label{n(delta-lambda)}
N(\delta,\lambda)= \frac{\sigma^2}{\delta^2+2\delta\varepsilon(\lambda)}.
\end{equation}
\end{proposition}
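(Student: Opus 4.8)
The plan is to combine the lower bound on the expected reconstruction error furnished by \cref{expected-reconstruction-error} with a deterministic upper bound on $\|f^\lambda_\delta - f^\dagger\|$ coming from the triangle inequality, and then to reduce the desired inequality \cref{key} to an elementary quadratic condition relating $\delta$ and $n$.

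First I would bound the left-hand side of \cref{key}. Writing $f^\lambda_\delta - f^\dagger = (f^\lambda_\delta - f^\lambda) + (f^\lambda - f^\dagger)$ and using $f^\lambda_\delta - f^\lambda = L^\lambda(y^\delta - y)$ together with $\|y^\delta - y\|\le\delta$, the triangle inequality gives
\begin{equation*}
\|f^\lambda_\delta - f^\dagger\| \le \|L^\lambda\|\,\delta + \|f^\lambda - f^\dagger\| \le \|L^\lambda\|_{HS}\,\delta + \|f^\lambda - f^\dagger\|,
\end{equation*}
where the last step uses the standard fact that the operator norm is dominated by the Hilbert--Schmidt norm. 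Setting $B := \|L^\lambda\|_{HS}$ and $E := \|f^\lambda - f^\dagger\|$, so that $\varepsilon(\lambda) = E/B$, squaring yields $\|f^\lambda_\delta - f^\dagger\|^2 \le (B\delta + E)^2$.

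Next I would invoke \cref{expected-reconstruction-error}, which supplies the lower bound $\mathbb E(\|\hat f^\lambda_n - f^\dagger\|^2) \ge \tfrac{\sigma^2}{n}B^2 + E^2$. Hence \cref{key} holds as soon as $(B\delta + E)^2 \le \tfrac{\sigma^2}{n}B^2 + E^2$. Expanding and cancelling $E^2$, this is equivalent to $B^2\delta^2 + 2BE\delta \le \tfrac{\sigma^2}{n}B^2$, and after dividing by $B^2$ (assuming $B>0$, so that $\varepsilon(\lambda)$ is well defined) to
\begin{equation*}
\delta^2 + 2\varepsilon(\lambda)\,\delta \le \frac{\sigma^2}{n}.
\end{equation*}

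It then remains to solve this quadratic inequality. Read as a condition on $\delta$, its left-hand side is strictly increasing on $[0,\infty)$ and vanishes at $\delta=0$, so the inequality holds precisely for $0<\delta\le\delta_+$, where $\delta_+ = \sqrt{\varepsilon(\lambda)^2 + \sigma^2/n} - \varepsilon(\lambda)$ is the positive root; rationalizing the numerator turns this into exactly $\delta_+ = \Delta(n,\lambda)$ as in \cref{Delta(n-lambda)}. Read instead as a condition on $n$ for fixed $\delta$, the equivalent form $n\,(\delta^2 + 2\varepsilon(\lambda)\delta)\le\sigma^2$ holds precisely for $n\le \sigma^2/(\delta^2 + 2\delta\varepsilon(\lambda)) = N(\delta,\lambda)$, matching \cref{n(delta-lambda)}. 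I do not expect a genuine obstacle here, since the argument is a clean combination of the triangle inequality, \cref{expected-reconstruction-error}, and the resolution of a quadratic. The one point requiring care is the passage from the operator norm to the Hilbert--Schmidt norm: it must enlarge the upper bound, so that the resulting $\|L^\lambda\|_{HS}$-term is directly comparable with the $\tfrac{\sigma^2}{n}\|L^\lambda\|_{HS}^2$-term produced by \cref{expected-reconstruction-error}.
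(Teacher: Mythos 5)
Your proposal is correct and follows essentially the same route as the paper: both combine the lower bound of \cref{expected-reconstruction-error} with the triangle-inequality estimate $\|f^\lambda_\delta - f^\dagger\| \le \delta\|L^\lambda\| + \|f^\lambda - f^\dagger\|$ and the domination of the operator norm by the Hilbert--Schmidt norm, the only cosmetic difference being that you solve the resulting quadratic inequality in $\delta$ explicitly whereas the paper rewrites $\sqrt{\tfrac{\sigma^2}{n}\|L^\lambda\|_{HS}^2 + \|f^\lambda - f^\dagger\|^2}$ as $\Delta(n,\lambda)\|L^\lambda\|_{HS} + \|f^\lambda - f^\dagger\|$ and compares directly. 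The two derivations of $\Delta(n,\lambda)$ and $N(\delta,\lambda)$ are algebraically identical.
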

\begin{proof}
We start from the result of \cref{expected-reconstruction-error}.
Easy manipulation of formula \cref{reconstruction error} leads to
\begin{equation}
\label{A}
\sqrt{\mathbb{E}\left( \| \hat{f}^{\lambda}_{n} - f^{\dagger} \|^2\right)} \ge \Delta(n, \lambda) \|L^\lambda\|_{HS} + \| f^\lambda - f^\dagger \|
\end{equation}
where $\Delta(n, \lambda)$ is defined as in equation \cref{Delta(n-lambda)}.
% \begin{equation}
% \Delta(n, \lambda):=\frac{1}{\sqrt{\frac{\sigma^2}{n} +  \varepsilon(\lambda)^2} +\varepsilon(\lambda)} \frac{\sigma^2}{n} ~ .
% \end{equation}
For each $\delta>0$, let $y^{\delta}$ s.t. $\| y^{\delta}-y\|\le\delta$, then a simple calculation gives
\begin{equation}\label{engl}
\| f^\lambda_\delta-f^{\dagger}\| \le \delta\| L^{\lambda}\| +\| f^{\lambda}-f^{\dagger}\|.
\end{equation}
Further, for each $\delta\le\Delta(n,\lambda)$ we have
\begin{equation}
\label{key2}
\sqrt{\mathbb{E} \left(\| \hat{f}^{\lambda}_{n} - f^{\dagger} \|^2\right)} \geq \delta \| L^{\lambda} \| +  \| f^{\lambda} - f^\dagger\|
\end{equation}
as $\| \cdot \|_{HS} \geq \| \cdot \|$.
From equations \cref{engl} and \cref{key2} we obtain $\forall$ $\delta\le \Delta(n,\lambda)$
\begin{equation}
\label{bound err_inverse_problems and risks}
\| f^\lambda_\delta - f^\dagger \|^2 \leq \mathbb{E}\left( \| \hat{f}^{\lambda}_{n} - f^{\dagger} \|^2\right)
\end{equation}
for each $y^{\delta}$ for which $\|y^{\delta} - y\| \leq \delta$.

Conversely, let $\delta>0$. 
For each $n\le N(\delta,\lambda)$, with $N(\lambda,\delta)$ defined by equation \cref{n(delta-lambda)} we have
\begin{equation}
\delta\le \Delta(n,\lambda)
\end{equation}
and so the thesis is proved.
\end{proof}

Functions $\Delta(n,\lambda)$ and $N(\delta,\lambda)$ express the dependency between the noisy level
%the deterministic infinite dimensional error bound 
$\delta$ and the number of samples $n$.
To make explicit this dependency we need to specify the rate of convergence of $\lambda \to 0$ both considered as a function of $\delta$ and $n$.
For the sake of convenience, we introduce the following
\begin{definition}
For any given $\lambda_n$ we define
\begin{equation}
\tilde{\delta}(n) := \Delta(n,\lambda_n) ~.
\end{equation}
% where $\Delta$ is defined in eq. \cref{Delta(n-lambda)}.
Conversely, for any given $\lambda_\delta$  we define 
\begin{equation}
\tilde{n}(\delta) := \lfloor N(\delta,\lambda_{\delta}) \rfloor ~,
\end{equation}
where the symbol $\lfloor\cdot \rfloor$ denotes the integer part. 
%$N$ is defined in eq. \cref{n(delta-lambda)}.
\end{definition}
From now on, in order to express asymptotic behaviors we make use of the Landau symbols $O$, $\Omega$ and $\Theta$.

% \begin{lemma}\label{lemma rates}
% Let $\varepsilon(\lambda) \in O(\lambda^{\gamma})$, with $\gamma\ge 0$.
% If $\lambda_{n} \in O(n^{-p})$ then 
% \begin{equation}\label{order delta_n}
% \tilde{\delta}(n) \in \Omega\left(n^{-\max(\frac{1}{2},1-p\gamma)}\right) ~ .
% \end{equation}
% If $\lambda_{\delta} \in O(\delta^{p^*})$ then 
% \begin{equation}\label{order n_delta}
% \tilde{n}(\delta) \in \Omega\left(\delta^{-\min(2,p^*\gamma+1)}\right) ~ .
% \end{equation}
% \end{lemma}

% \begin{proof}
% The eq. $\cref{order delta_n}$ follows from the definition of $\tilde{\delta}$ and from hypothesis $\lambda_n\in O(n^{-p})$ and $\epsilon(\lambda)\in O(\lambda^{\gamma})$. In the same way the eq. $\cref{order n_delta}$ follows from the definition of $\tilde{n}$ and from hypothesis $\lambda_{\delta}\in O(\delta^{p^*})$ and $\epsilon(\lambda)\in O(\lambda^{\gamma})$.
% \end{proof}

\begin{lemma}\label{lemma rates}
Let $\varepsilon(\lambda) \in \Theta(\lambda^{\gamma})$, with $\gamma\ge 0$.
If $\lambda_{n} \in \Theta(n^{-p})$, with $p>0$, then 
\begin{equation}\label{order delta_n}
\tilde{\delta}(n) \in \Theta\left(n^{-\max(\frac{1}{2},1-p\gamma)}\right) ~ .
\end{equation}
If $\lambda_{\delta} \in \Theta(\delta^{p^*})$, with $p^*>0$, then 
\begin{equation}\label{order n_delta}
\tilde{n}(\delta) \in \Theta\left(\delta^{-\min(2,p^*\gamma+1)}\right) ~ .
\end{equation}
\end{lemma}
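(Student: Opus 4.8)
The plan is to substitute the assumed asymptotics directly into the closed forms \cref{Delta(n-lambda)} and \cref{n(delta-lambda)} and then treat each expression as a ratio of positive power laws, exploiting the elementary principle that a sum of two positive terms is, up to two-sided constants, governed by its dominant summand. Concretely, I will repeatedly use that if $a \in \Theta(n^{-\alpha})$ and $b \in \Theta(n^{-\beta})$ with $\alpha,\beta \ge 0$ then $a+b \in \Theta(n^{-\min(\alpha,\beta)})$ and $\sqrt{a} \in \Theta(n^{-\alpha/2})$, together with the analogous statements for powers of $\delta \to 0^+$ (where the dominant term is again the one with the smaller exponent).

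For the first claim I would start from $\lambda_n \in \Theta(n^{-p})$ and $\varepsilon(\lambda) \in \Theta(\lambda^\gamma)$ to obtain $\varepsilon(\lambda_n) \in \Theta(n^{-p\gamma})$, hence $\varepsilon(\lambda_n)^2 \in \Theta(n^{-2p\gamma})$, while $\sigma^2/n \in \Theta(n^{-1})$. Inserting these into $\tilde{\delta}(n) = \Delta(n,\lambda_n)$, the numerator is $\Theta(n^{-1})$, and the radicand of the denominator satisfies $\sigma^2/n + \varepsilon(\lambda_n)^2 \in \Theta(n^{-\min(1,2p\gamma)})$, so its square root lies in $\Theta(n^{-\frac{1}{2}\min(1,2p\gamma)}) = \Theta(n^{-\min(\frac{1}{2},p\gamma)})$. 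Adding $\varepsilon(\lambda_n) \in \Theta(n^{-p\gamma})$ and keeping the dominant summand, the whole denominator is in $\Theta(n^{-\min(\frac{1}{2},p\gamma)})$, since $\min(\tfrac{1}{2},p\gamma) \le p\gamma$. Dividing gives $\tilde{\delta}(n) \in \Theta(n^{-1+\min(\frac{1}{2},p\gamma)})$, and the elementary identity $1 - \min(\tfrac{1}{2},p\gamma) = \max(\tfrac{1}{2},1-p\gamma)$ yields exactly \cref{order delta_n}.

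For the second claim I would likewise use $\lambda_\delta \in \Theta(\delta^{p^*})$ to get $\varepsilon(\lambda_\delta) \in \Theta(\delta^{p^*\gamma})$, so that the denominator in \cref{n(delta-lambda)} satisfies $\delta^2 + 2\delta\,\varepsilon(\lambda_\delta) \in \Theta(\delta^2 + \delta^{1+p^*\gamma}) = \Theta(\delta^{\min(2,1+p^*\gamma)})$, the minimum appearing because the smaller exponent dominates as $\delta \to 0^+$. Consequently $N(\delta,\lambda_\delta) \in \Theta(\delta^{-\min(2,p^*\gamma+1)})$. Since this quantity diverges as $\delta \to 0^+$, taking the integer part in $\tilde{n}(\delta) = \lfloor N(\delta,\lambda_\delta)\rfloor$ alters the value by at most one unit and hence preserves the $\Theta$-order, establishing \cref{order n_delta}.

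The computations are routine once the dominance principle is in place; the only point requiring genuine care is the implicit case analysis hidden in each $\min$ and $\max$, namely ensuring that both the upper and the lower bound of every $\Theta$-estimate remain aligned when one power law overtakes the other. I therefore expect the main (if modest) obstacle to be verifying the threshold cases $p\gamma = \tfrac{1}{2}$ and $p^*\gamma = 1$ explicitly, where the two competing summands are of the same order, so as to confirm that the two-sided bounds match there as well.
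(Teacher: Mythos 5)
Your proposal is correct and follows essentially the same route as the paper, which simply asserts that both estimates follow by substituting the hypotheses $\lambda_n\in\Theta(n^{-p})$, $\lambda_\delta\in\Theta(\delta^{p^*})$ and $\varepsilon(\lambda)\in\Theta(\lambda^\gamma)$ into the closed forms for $\Delta(n,\lambda)$ and $N(\delta,\lambda)$; you merely supply the dominance-of-power-laws bookkeeping (including the identity $1-\min(\tfrac12,p\gamma)=\max(\tfrac12,1-p\gamma)$ and the harmlessness of the integer part) that the paper leaves implicit. No gaps.
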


\begin{proof}
The equation $\cref{order delta_n}$ follows from the definition of $\tilde{\delta}$ and from hypotheses $\lambda_n\in \Theta(n^{-p})$ and $\varepsilon(\lambda)\in \Theta(\lambda^{\gamma})$. In the same way the equation $\cref{order n_delta}$ follows from the definition of $\tilde{n}$ and from hypotheses $\lambda_{\delta}\in \Theta(\delta^{p^*})$ and $\varepsilon(\lambda)\in \Theta(\lambda^{\gamma})$.
\end{proof}

% \begin{lemma}\label{lemma identity}
% Given $\lambda_n=\Lambda^{n}(n)$, let us consider $\Lambda^{\delta}$ defined as
% \begin{equation}\label{Lambda delta}
% \Lambda^{\delta}:=\Lambda^{n}\circ \tilde{n},
% \end{equation}
% then 
% \begin{equation}
% \tilde{\delta}\circ\tilde{n}(\delta) =\delta,
% \end{equation}
% for all $\delta$ such that $\frac{\sigma^2}{\delta^2+2\delta\epsilon(\Lambda^{\delta}(\delta))}\in\mathbb{N}$.

% Given $\lambda_{\delta}=\Lambda^{\delta}(\delta)$. Let $\Lambda^{n}$ defined as
% \begin{equation}\label{Lambda n}
% \Lambda^{n}:=\Lambda^{\delta}\circ\tilde{\delta},
% \end{equation}
% then
% \begin{equation}
% \tilde{n}\circ\tilde{\delta}(n)=n,
% \end{equation}
% $\forall$ $n\in\mathbb{N}$.
% \end{lemma}

\begin{lemma}\label{lemma identity}
Given $\lambda_n$ there exists a unique $\lambda_{\delta}$ such that
\begin{equation}\label{identity delta n}
\tilde{\delta}\circ\tilde{n} =id_{\Im(\tilde{\delta})},
\end{equation}
where $id_{\Im(\tilde{\delta})}$ indicates the identity on the set $\Im(\tilde{\delta}) = \{\delta > 0 ~|~ \frac{\sigma^2}{\delta^2+2\delta\varepsilon(\lambda_{\delta})}\in\mathbb{N}\}$
and
\begin{equation}\label{Lambda_n respect Lambda_delta}
\Lambda^n = \Lambda^{\delta}\circ\tilde{\delta},
\end{equation}
where $\Lambda^{n} : \mathbb N \to \mathbb R$ and $\Lambda^{\delta} : \mathbb R \to \mathbb R$ are such that $\lambda_n = \Lambda^{n}(n)$ and $\lambda_{\delta} = \Lambda^{\delta}(\delta)$.
Furthermore, 
\begin{equation}\label{identity n delta}
\tilde{n}\circ\tilde{\delta}=id_{\mathbb{N}}. 
\end{equation}
% and
% for all $\delta>0$, $\lambda_{\delta}$ satisfies the following equality
% \begin{equation}\label{condition Lambda delta}
% \Lambda^{\delta}(\delta)=\Lambda^{\delta}(\tilde{\delta}\circ\tilde{n}(\delta)).
% \end{equation}
\end{lemma}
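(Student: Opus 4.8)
The plan is to reduce the whole statement to a single algebraic equivalence relating $\Delta$ and $N$ at a \emph{fixed} regularization parameter, and then to transport it through the two parametrizations $\Lambda^n$ and $\Lambda^{\delta}$. First I would rationalize \cref{Delta(n-lambda)}: multiplying numerator and denominator by $\sqrt{\frac{\sigma^2}{n}+\varepsilon(\lambda)^2}-\varepsilon(\lambda)$ collapses it to $\Delta(n,\lambda)=\sqrt{\frac{\sigma^2}{n}+\varepsilon(\lambda)^2}-\varepsilon(\lambda)$. Setting $\delta=\Delta(n,\lambda)$, adding $\varepsilon(\lambda)$ and squaring (a reversible step, both sides being positive) gives $\delta^2+2\delta\varepsilon(\lambda)=\frac{\sigma^2}{n}$, that is $n=N(\delta,\lambda)$ with $N$ as in \cref{n(delta-lambda)}. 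This yields the pivotal duality, valid for every fixed $\lambda$:
\begin{equation*}
\delta=\Delta(n,\lambda)\quad\Longleftrightarrow\quad n=N(\delta,\lambda).
\end{equation*}

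Next I would exploit this duality to construct $\Lambda^{\delta}$ and to prove \cref{identity n delta}. The consistency condition \cref{Lambda_n respect Lambda_delta}, $\Lambda^n=\Lambda^{\delta}\circ\tilde{\delta}$, forces $\Lambda^{\delta}(\tilde{\delta}(n))=\Lambda^n(n)=\lambda_n$ for every $n\in\mathbb{N}$, and I would take exactly this as the definition of $\Lambda^{\delta}$ on $\Im(\tilde{\delta})$, so that $\lambda_{\tilde{\delta}(n)}=\lambda_n$ by construction. Then, fixing $n$ and writing $\delta=\tilde{\delta}(n)=\Delta(n,\lambda_n)$, the duality gives $N(\delta,\lambda_{\delta})=N\big(\Delta(n,\lambda_n),\lambda_n\big)=n\in\mathbb{N}$; hence the integer part in the definition of $\tilde{n}$ acts trivially and $\tilde{n}(\tilde{\delta}(n))=\lfloor n\rfloor=n$. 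This establishes $\tilde{n}\circ\tilde{\delta}=id_{\mathbb{N}}$ and, as a by-product, shows that $N(\delta,\lambda_{\delta})\in\mathbb{N}$ for every $\delta\in\Im(\tilde{\delta})$, in agreement with the description of $\Im(\tilde{\delta})$ in the statement.

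The remaining identity \cref{identity delta n} then follows formally: any $\delta\in\Im(\tilde{\delta})$ is $\delta=\tilde{\delta}(n)$ for some $n$, so $\tilde{n}(\delta)=n$ by the previous paragraph and $\tilde{\delta}(\tilde{n}(\delta))=\tilde{\delta}(n)=\delta$, i.e. $\tilde{\delta}\circ\tilde{n}=id_{\Im(\tilde{\delta})}$. Uniqueness of $\Lambda^{\delta}$ on $\Im(\tilde{\delta})$ is immediate, since \cref{Lambda_n respect Lambda_delta} leaves no freedom once $\Lambda^n$ is prescribed.

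I expect the one genuine obstacle to be the \emph{well-posedness} of this construction, which rests on the injectivity of $\tilde{\delta}$. Indeed, defining $\Lambda^{\delta}$ by $\Lambda^{\delta}(\tilde{\delta}(n))=\lambda_n$ requires that $\tilde{\delta}(n_1)=\tilde{\delta}(n_2)$ imply $\lambda_{n_1}=\lambda_{n_2}$, and the argument for \cref{identity delta n} needs $\tilde{n}$ to be single-valued on $\Im(\tilde{\delta})$. The duality shows injectivity to be not only sufficient but necessary: if $\tilde{\delta}(n_1)=\tilde{\delta}(n_2)=\delta$ with $n_1\neq n_2$, then $n_1=N(\delta,\lambda_{n_1})$ and $n_2=N(\delta,\lambda_{n_2})$ force $\varepsilon(\lambda_{n_1})\neq\varepsilon(\lambda_{n_2})$, which is incompatible with the single value $\Lambda^{\delta}(\delta)$ demanded by \cref{Lambda_n respect Lambda_delta}. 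I would therefore close this gap by proving that $n\mapsto\tilde{\delta}(n)=\sqrt{\frac{\sigma^2}{n}+\varepsilon(\lambda_n)^2}-\varepsilon(\lambda_n)$ is strictly monotone in the regime of interest, where $\lambda_n\to0$ and $\varepsilon(\lambda_n)\to0$ consistently with the rate hypotheses used in \cref{lemma rates}; this is the step I regard as carrying the real content of the lemma.
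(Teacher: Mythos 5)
Your proposal is correct and rests on the same underlying idea as the paper's (two-sentence) proof: the exact algebraic duality $\delta=\Delta(n,\lambda)\iff n=N(\delta,\lambda)$ at fixed $\lambda$, obtained by rationalizing \cref{Delta(n-lambda)} to $\Delta(n,\lambda)=\sqrt{\sigma^2/n+\varepsilon(\lambda)^2}-\varepsilon(\lambda)$, followed by transporting $\Lambda^n$ along this correspondence. The difference is the direction of the construction: the paper sets $\lambda_{\delta}:=\Lambda^n(\tilde{n}(\delta))$ and declares the identities to follow by ``straightforward calculus,'' but that definition is implicit, since $\tilde{n}(\delta)=\lfloor N(\delta,\lambda_{\delta})\rfloor$ already involves $\lambda_{\delta}$; your definition $\Lambda^{\delta}(\tilde{\delta}(n)):=\lambda_n$ on $\Im(\tilde{\delta})$ is explicit, avoids that circularity, and yields \cref{identity n delta} with the floor acting trivially because $N(\tilde{\delta}(n),\lambda_n)=n$ exactly. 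You are also right to single out injectivity of $n\mapsto\tilde{\delta}(n)=\sqrt{\sigma^2/n+\varepsilon(\lambda_n)^2}-\varepsilon(\lambda_n)$ as the one real obstruction, which the paper passes over in silence: since $\Delta$ increases in $\sigma^2/n$ but decreases in $\varepsilon$, and both quantities decrease along the sequence, monotonicity is not automatic and is not implied by the $\Theta$-rates of \cref{lemma rates} alone, so an additional (mild) hypothesis is genuinely needed for the lemma to be well posed. That observation, and the need to close it, is the only substantive content beyond what the paper records; the rest of your argument matches the intended one.
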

\begin{proof}
The existence and uniqueness of $\lambda_{\delta}$ such that \cref{identity delta n,Lambda_n respect Lambda_delta} are verified follow by defining $\lambda_{\delta}:= \Lambda^n(\tilde{n}(\delta))$. 
With straightforward calculus it can be verified that \cref{Lambda_n respect Lambda_delta} implies \cref{identity n delta}. 
% we define $\lambda_{\delta}$, then  $\tilde{\delta}\circ\tilde{n}=id_{\Im(\tilde{\delta})}$ implies \cref{lambda_delta},
% %$\lambda_{\delta}=\Lambda^{n}(\tilde{n}(\delta))$, 
% when $\varepsilon$ is injective. Then we can observe that $\Lambda^n\circ\tilde{n}=\Lambda^{\delta}=\Lambda^{\delta}\circ\tilde{\delta}\circ\tilde{n}$ on the set $\{\delta : \frac{\sigma^2}{\delta^2+2\delta\varepsilon(\Lambda^{\delta}(\delta))}\in\mathbb{N} \}$ and so $\Lambda^{n}=\Lambda^{\delta}\circ\tilde{\delta}$ on $\mathbb{N}$ and this implies that $\tilde{n}\circ\tilde{\delta}=id_{\mathbb{N}}$. Finally we obtain
% \begin{equation}
% \Lambda^{\delta}(\delta)=\Lambda^n\circ\tilde{n}(\delta)=\Lambda^n\circ\tilde{n}\circ\tilde{\delta}\circ\tilde{n}(\delta)=\Lambda^{\delta}\circ\tilde{\delta}\circ\tilde{n}(\delta).
% \end{equation}
\end{proof}
Similarly, we give the converse result. 
\begin{lemma}\label{conversely lemma identity} 
Given $\lambda_{\delta}$, there exists a unique $\lambda_n$ such that 
\begin{equation*}
\tilde{n}\circ\tilde{\delta} = id_{\mathbb{N}}
\end{equation*}
and
\begin{equation}
\Lambda^{\delta} = \Lambda^n\circ\tilde{n}
\end{equation}
where we have used the same notation of \cref{lemma identity}. Furthermore, \begin{equation}
\tilde{\delta}\circ\tilde{n} = id_{\Im(\tilde{\delta})}.
\end{equation}
% and $\lambda_n$ satisfies the following equalities
% \begin{equation}
% \lambda_n=\Lambda^{\delta}(\tilde{\delta}(n))
% \end{equation}
% and
% \begin{equation}
% \Lambda^{n}(n)=\Lambda^{n}(\tilde{n}\circ\tilde{\delta}(n)),
% \end{equation}
% for each $n\in\mathbb{N}$.
\end{lemma}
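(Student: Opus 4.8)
The plan is to run the mirror image of the argument for \cref{lemma identity}, interchanging the roles of $n$ and $\delta$, the whole thing being powered by one symmetric algebraic fact. First I would record that the function $\Delta$ of \cref{Delta(n-lambda)} admits the closed form $\Delta(n,\lambda)=\sqrt{\tfrac{\sigma^2}{n}+\varepsilon(\lambda)^2}-\varepsilon(\lambda)$, obtained by rationalizing its numerator; since $\varepsilon(\lambda)\ge 0$ this is well defined and nonnegative. Solving $\delta=\Delta(n,\lambda)$ for $n$ then gives, after squaring, $\tfrac{\sigma^2}{n}=\delta^2+2\delta\varepsilon(\lambda)$, i.e. exactly $n=N(\delta,\lambda)$ with $N$ as in \cref{n(delta-lambda)}. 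Hence, for every fixed $\lambda$, the maps $n\mapsto\Delta(n,\lambda)$ and $\delta\mapsto N(\delta,\lambda)$ are mutually inverse. This symmetry is what makes \cref{lemma identity} and its converse interchangeable.

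Next I would construct $\lambda_n$ from the given $\lambda_\delta$ by imposing the compatibility relation $\Lambda^n:=\Lambda^\delta\circ\tilde\delta$ (the analogue of \cref{Lambda_n respect Lambda_delta}), which sets $\lambda_n=\Lambda^\delta(\tilde\delta(n))$ just as the forward proof sets $\lambda_\delta=\Lambda^n(\tilde n(\delta))$. The point of this choice is that the two regularization parameters coincide at corresponding points, $\lambda_n=\lambda_\delta$ whenever $\delta=\tilde\delta(n)$. Applying the inversion of the first step with this common parameter yields $N(\tilde\delta(n),\lambda_n)=n\in\mathbb N$, so that the floor is harmless and $\tilde n(\tilde\delta(n))=\lfloor n\rfloor=n$; this is $\tilde n\circ\tilde\delta=id_{\mathbb N}$.

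From here the remaining two assertions follow formally. For $\tilde\delta\circ\tilde n=id_{\Im(\tilde\delta)}$ I would pick $\delta=\tilde\delta(n_0)\in\Im(\tilde\delta)$, use the identity just proved to get $\tilde n(\delta)=n_0$, and apply $\tilde\delta$ to return to $\delta$. Substituting this back into $\Lambda^n=\Lambda^\delta\circ\tilde\delta$ recovers $\Lambda^\delta=\Lambda^n\circ\tilde n$, the second displayed equation; and uniqueness of $\lambda_n$ follows because any $\Lambda^n$ obeying $\Lambda^\delta=\Lambda^n\circ\tilde n$ is forced, evaluating at $\delta=\tilde\delta(m)$ and using surjectivity of $\tilde n$ onto $\mathbb N$, to equal $\Lambda^\delta\circ\tilde\delta$ on all of $\mathbb N$.

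The hard part will not be any single computation but the self-referential nature of the setup: $\tilde\delta$ is built from $\Lambda^n$ while $\Lambda^n$ is defined through $\tilde\delta$, so I must check that $\lambda_n=\Lambda^\delta(\tilde\delta(n))$ is a genuine (fixed-point) definition rather than a circular one, exactly as in \cref{lemma identity}. The second delicate point is the floor in $\tilde n$: I must verify that the integrality constraint it imposes is precisely the one encoded in $\Im(\tilde\delta)=\{\delta>0:N(\delta,\lambda_\delta)\in\mathbb N\}$, which is the reason $\tilde\delta\circ\tilde n$ is the identity only on $\Im(\tilde\delta)$ and not on all of $(0,\infty)$.
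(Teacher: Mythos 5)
Your proposal is correct and follows essentially the same route as the paper, which simply declares the proof analogous to \cref{lemma identity} upon defining $\lambda_n = \Lambda^{\delta}(\tilde{\delta}(n))$ — exactly your compatibility relation $\Lambda^n = \Lambda^\delta\circ\tilde\delta$. Your explicit rationalization of $\Delta$ and the inversion $\sigma^2/n = \delta^2 + 2\delta\varepsilon(\lambda)$ is just the ``straightforward calculus'' the paper leaves implicit, and your flagged caveats (the self-referential definition, the floor) are likewise left unaddressed in the paper itself.
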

% \begin{lemma}\label{conversely lemma identity} 
% Given $\lambda_{\delta}$, there exists a unique $\lambda_n$ such that \cref{identity n delta} and \cref{identity delta n} apply and $\lambda_n$ satisfies the following equalities
% \begin{equation}
% \lambda_n=\Lambda^{\delta}(\tilde{\delta}(n))
% \end{equation}
% and
% \begin{equation}
% \Lambda^{n}(n)=\Lambda^{n}(\tilde{n}\circ\tilde{\delta}(n)),
% \end{equation}
% for each $n\in\mathbb{N}$.
% \end{lemma}
The proof is analogous to the one of \cref{lemma identity} by defining $\lambda_n = \Lambda^{\delta}(\tilde{\delta}(n))$.
The following result relates a given upper convergence rate computed with respect to $n$ to the one computed with respect to $\delta$. 
\begin{theoremmm}\label{theo_conv_rates}
Let the average upper rate with respect to the number of samples identically and independently drawn according to a distribution $\rho$ be equal to $n^{-\alpha}$ for a given $\alpha> 0$, i.e.
\begin{equation}
\label{stochastic-convergence-rate}
\mathbb E (\| \hat{f}_n^\lambda - f^\dagger \|^2)\in O  \left(\frac{1}{n}\right)^{\alpha} ~ ,
\end{equation}
given $\lambda =\lambda_n=\Theta\left(n^{-p}\right)$, with $p>0$ and $\varepsilon(\lambda)=\Theta\left(\lambda^{\gamma}\right)$, with $\gamma>0$. 
Then the upper rate of the error with respect to the noise level $\delta \to 0$ is given by
%  \begin{equation}
% \|L^\lambda y^\delta - f^\dagger \| \in O\left( \delta^{\alpha\min(2,\frac{1}{1-\gamma p})} \right) ~ ,
% \end{equation}
 \begin{equation}\label{order deterministic error}
\| f^\lambda_\delta - f^\dagger \|^2 \in
\begin{cases}
O\left(\delta^{2\alpha}\right) & \text{ if } p\gamma\ge\frac{1}{2} \\
O\left( \delta^{\frac{\alpha}{1- p\gamma}} \right) & \text{ if } p\gamma <\frac{1}{2}~ ,
\end{cases}
\end{equation}
where $f^{\lambda}_{\delta}$ is defined in equation \cref{solution f lambda delta}, 
$y^\delta$ is such that $\|y^\delta - y\|\le\delta $, and $\lambda=\lambda_{\delta}$ is defined in \cref{lemma identity} and it has the following rate
\begin{equation}\label{order lambda_delta}
\lambda_{\delta}\in 
\begin{cases}
\Theta\left(\delta^{2p}\right) & \text{ if } p\gamma\ge\frac{1}{2} \\
\Theta\left(\delta^{\frac{p}{1-p\gamma}}\right) & \text{ if } p\gamma <\frac{1}{2}.
\end{cases}
\end{equation}
\end{theoremmm}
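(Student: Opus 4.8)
The plan is to combine the pointwise error inequality of \cref{prop error} with the asymptotic bookkeeping of \cref{lemma rates} and the consistent pairing of the two parameter choices provided by \cref{lemma identity}. The key observation is that \cref{key} becomes tight on the diagonal $\delta = \Delta(n,\lambda_n)$: taking $\lambda = \lambda_n$ and $\delta = \tilde{\delta}(n) = \Delta(n,\lambda_n)$ meets the hypothesis $0 < \delta \le \Delta(n,\lambda_n)$ with equality, so \cref{prop error} gives
\[
\|f^{\lambda_n}_{\tilde{\delta}(n)} - f^\dagger\|^2 \le \mathbb{E}\left(\|\hat{f}^{\lambda_n}_n - f^\dagger\|^2\right) \in O\left(n^{-\alpha}\right),
\]
where the last membership is exactly the hypothesis \cref{stochastic-convergence-rate}. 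This is the load-bearing step, transferring the sampling rate to the deterministic reconstruction error evaluated at the matched noise level $\tilde{\delta}(n)$.

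First I would re-express $n^{-\alpha}$ as a power of $\delta$. Writing $q := \max(1/2,\, 1-p\gamma)$, \cref{lemma rates} gives $\tilde{\delta}(n) \in \Theta(n^{-q})$ under $\lambda_n \in \Theta(n^{-p})$ and $\varepsilon(\lambda) \in \Theta(\lambda^\gamma)$. Hence, setting $\delta = \tilde{\delta}(n)$ and using the lower bound in this $\Theta$, one gets $n^{-\alpha} \in O(\delta^{\alpha/q})$, so that $\|f^{\lambda_\delta}_\delta - f^\dagger\|^2 \in O(\delta^{\alpha/q})$ on $\Im(\tilde{\delta})$; splitting into $p\gamma \ge 1/2$ (where $q = 1/2$ and $\alpha/q = 2\alpha$) and $p\gamma < 1/2$ (where $q = 1-p\gamma$ and $\alpha/q = \alpha/(1-p\gamma)$) reproduces \cref{order deterministic error}. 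Identifying $\lambda_\delta$ with $\lambda_n$ via $\Lambda^n = \Lambda^\delta \circ \tilde{\delta}$ from \cref{lemma identity}, and combining $\lambda_n \in \Theta(n^{-p})$ with $n \in \Theta(\delta^{-1/q})$, yields $\lambda_\delta \in \Theta(\delta^{p/q})$, i.e. \cref{order lambda_delta}.

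To promote the conclusion from the discrete image $\Im(\tilde{\delta})$ to the genuine limit $\delta \to 0$, I would then invoke the converse half of \cref{prop error}: for an arbitrary $\delta > 0$ put $n = \tilde{n}(\delta) = \lfloor N(\delta,\lambda_\delta)\rfloor \le N(\delta,\lambda_\delta)$, so that \cref{key} holds with this $n$ and $\lambda = \lambda_\delta$. The relation $\Lambda^n = \Lambda^\delta \circ \tilde{\delta}$ together with $\tilde{n}\circ\tilde{\delta} = id_{\mathbb N}$ guarantees $\lambda_\delta = \Lambda^n(\tilde{n}(\delta))$, so the parameter used at noise level $\delta$ is exactly the one prescribed at $n = \tilde{n}(\delta)$ and the right-hand side equals $\mathbb{E}(\|\hat{f}^{\lambda_{\tilde{n}(\delta)}}_{\tilde{n}(\delta)} - f^\dagger\|^2) \in O(\tilde{n}(\delta)^{-\alpha})$. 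Feeding $p^* = p/q$ into the second clause of \cref{lemma rates} gives $\tilde{n}(\delta) \in \Theta(\delta^{-\min(2,\,p^*\gamma+1)})$, and substituting delivers the same two exponents for all $\delta > 0$.

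The step I expect to require the most care is the internal consistency of the exponents across the two scales: the parameter rate $p^* = p/q$ obtained from the forward identification must be fed back into the second clause of \cref{lemma rates}, and one has to check that $\min(2,\,p^*\gamma+1) = 1/q$ — equivalently, that the regime $p^* \ge 1/\gamma$ selecting the value $2$ in the $\min$ coincides with the regime $p\gamma \ge 1/2$ selecting the value $1/2$ in the $\max$. Only once this matching is verified do the forward bound on $\Im(\tilde{\delta})$ and the converse bound for all $\delta$ agree and collapse to the single rate in \cref{order deterministic error}; everything else is routine substitution of the $\Theta$-estimates established in \cref{lemma rates,lemma identity}.
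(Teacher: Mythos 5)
Your proposal is correct and follows essentially the same route as the paper's proof: both rest on \cref{prop error} applied at $n=\tilde n(\delta)$ with $\lambda=\lambda_\delta$ matched to $\lambda_n$ via \cref{lemma identity}, followed by the rate substitutions of \cref{lemma rates}. The consistency check you flag does hold (in both regimes $\min(2,p^*\gamma+1)=1/\max(\tfrac12,1-p\gamma)$), and it is exactly the computation the paper carries out case by case when it derives the rate of $\tilde n(\delta)$ from that of $\lambda_\delta$.
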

\begin{proof}
Given $\lambda_n=\Lambda^{n}(n)$, we define $\lambda_{\delta}=\Lambda^{\delta}(\delta)$ according to \cref{lemma identity}, so that equations %\cref{lambda_delta} and \cref{condition Lambda delta} hold. 
\cref{identity delta n} and \cref{Lambda_n respect Lambda_delta} hold.
The rate of $\lambda_{\delta}$ given in \cref{order lambda_delta} can be found by using the hypothesis $\lambda_n=\Theta(n^{-p})$ and \cref{lemma rates}.
Now we prove \cref{order deterministic error}. Thanks to \cref{prop error} and \cref{lemma rates}, for each $\lambda>0$ and $\delta>0$ there exists $\tilde{n}(\delta)$ such that for all $n\le\tilde{n}(\delta)$
\begin{equation}
\| f^\lambda_\delta - f^{\dagger}\|^2\le\mathbb{E}(\| \hat{f}_n^{\lambda}-f^{\dagger}\|^2).
\end{equation}
Let $n=\tilde{n}(\delta)$, then
\begin{equation}
\| f^{\lambda}_{\delta} - f^{\dagger}\|^2\le\mathbb{E}(\| \hat{f}_{\tilde{n}(\delta)}^{\lambda}-f^{\dagger}\|^2).
\end{equation}
Let $\lambda=\lambda_{\delta}$.
% Let $\delta$ s.t. is verified $\tilde{\delta}(\tilde{n}(\delta))=\delta$, 
Then there exist $n_0\in\mathbb{N}$ and $M>0$ such that
\begin{equation}
\label{order error}
\begin{split}
\| f^{\lambda_{\delta}}_{\delta}-f^{\dagger}\|^2 & \le \mathbb{E}(\| \hat{f}_{\tilde{n}(\delta)}^{\Lambda^{\delta}(\delta)}-f^{\dagger}\|^2) %=\mathbb{E}(\| \hat{f}_{\tilde{n}(\delta)}^{\Lambda^{\delta}(\tilde{\delta}(\tilde{n}(\delta)))}-f^{\dagger}\|^2) \\ 
 %& 
 = \mathbb{E}(\| \hat{f}_{\tilde{n}(\delta)}^{\Lambda^{n}(\tilde{n}(\delta))}-f^{\dagger}\|^2)
\le M \left(\frac{1}{\tilde{n}(\delta)}\right)^{\alpha},
%\le M' \delta^{\alpha\min(2,\frac{1}{1-p\gamma})},
\end{split}
\end{equation}
for all $\tilde{n}(\delta)>n_0$.
% It can be verified that $\lambda_{\delta}=O\left(\delta^{\min\left(2p,\frac{p}{1-p\gamma}\right)}\right)$ and then $\tilde{n}(\delta)=\Omega\left(\delta^{\min\left(2,\frac{1}{1-p\gamma}\right)}\right)$, and we conclude  
% \begin{equation}
% \| L^{\lambda_{\delta}}y^{\delta}-f^{\dagger}\|^2 = O\left(\delta^{\alpha\min\left(2,\frac{1}{1-p\gamma}\right)}\right).
% \end{equation}
From equations \cref{order lambda_delta} and \cref{order error}, and by using \cref{lemma rates} we obtain
\begin{itemize}
\item[-] if $p\gamma\ge\frac{1}{2}$ then $\lambda_{\delta}=\Theta(\delta^{2p})$, therefore from \cref{prop error} we have $\tilde{n}(\delta)\in\Theta(\delta^{2})$ and from equation \cref{order error} we obtain $\| f^\lambda_\delta -f^{\dagger}\|^2\in O(\delta^{2\alpha})$
\item[-] if $p\gamma<\frac{1}{2}$ then $\lambda_{\delta}\in \Theta(\delta^{\frac{p}{1-p\gamma}})$, therefore from \cref{prop error} we have $\tilde{n}(\delta)\in\Theta(\delta^{\frac{1}{1-p\gamma}})$ and from equation \cref{order error} we obtain $\| f^\lambda_\delta-f^{\dagger}\|^2\in O(\delta^{\frac{\alpha}{1-p\gamma}})$.
\end{itemize}
This completes the proof.
\end{proof}

% We observe that the convergence rate of the (expected) estimation error is usually given by fixing the asymptotic behaviour of the regularization parameter $\lambda$. If in Theorem \cref{theo_conv_rates} we assume $\lambda_n=\Theta(n^{-p})$ and $\epsilon(\lambda)=\Theta(\lambda^{\gamma})$ in place of $\lambda_n=O(n^{-p})$ and $\epsilon(\lambda)=O(\lambda^{\gamma})$, we obtain the same result (eq. $\cref{order deterministic error}$) with $\lambda_{\delta}$ satisfying \cref{order lambda_delta} by substituting $O$ with $\Theta$. 
Now we give the converse result regarding lower rates. 
\begin{theoremmm}\label{conversely theorem rates}
Let $f^{\lambda}_{\delta}$ be defined in \cref{solution f lambda delta}. Let the lower rate of the convergence error with respect to the noise level $\delta \to 0$ be equal to $\delta^{\alpha}$ for a given $\alpha>0$, i.e.
%  \begin{equation}
% \|L^\lambda y^\delta - f^\dagger \| \in O\left( \delta^{\alpha\min(2,\frac{1}{1-\gamma p})} \right) ~ ,
% \end{equation}
 \begin{equation}\label{error rate conversely}
\| f^{\lambda}_{\delta} - f^\dagger \|^2 \in\Omega(\delta^{\alpha}),
\end{equation}
where  $y^{\delta}$ is such that $\| y^{\delta}-y\|\le\delta$, $\lambda=\lambda_{\delta}=\Theta(\delta^{p^*})$, with $p^*>0$ and $\varepsilon(\lambda)=\Theta(\lambda^{\gamma})$, with $\gamma>0$. Then the average lower rate with respect to the number of samples $n\to\infty$ is given by
\begin{equation}
\label{stochastic lower rate}
\mathbb E (\| \hat{f}_n^\lambda - f^\dagger \|^2)\in
\begin{cases}
\Omega\left(n^{-\frac{\alpha}{2}}\right) & \text{ if } p^*\gamma\ge 1 \\
\Omega\left(n^{-\frac{\alpha}{1+p^*\gamma}}\right) & \text{ if } p^*\gamma< 1
\end{cases}
\end{equation} 
where $\lambda=\lambda_n$ is defined in \cref{conversely lemma identity} and it has the following rate
\begin{equation}\label{order lambda_n}
\lambda_{n}\in 
\begin{cases}
\Theta\left(n^{-\frac{p^*}{2}}\right) & \text{ if } p^*\gamma\ge 1 \\
\Theta\left(n^{-\frac{p^*}{1+p^*\gamma}}\right) & \text{ if } p^*\gamma < 1.
\end{cases}
\end{equation}
\end{theoremmm}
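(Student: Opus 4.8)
The plan is to run the argument of \cref{theo_conv_rates} in reverse, propagating a \emph{lower} bound rather than an upper one and exchanging the parametrization by $n$ for a parametrization by $\delta$. The engine is again the key inequality of \cref{prop error}: having fixed $n$ and a regularization parameter $\lambda$, one has $\|f^\lambda_\delta-f^\dagger\|^2\le\mathbb{E}(\|\hat f^\lambda_n-f^\dagger\|^2)$ for every admissible $\delta\le\Delta(n,\lambda)$. Since the hypothesis \cref{error rate conversely} furnishes a lower bound on the left-hand side, this inequality transfers that lower bound to the expected error \cref{stochastic lower rate}, which is exactly the quantity we wish to bound from below.

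The first step is to lock the two regularization parameters together. Starting from the prescribed $\lambda_\delta=\Theta(\delta^{p^*})$, I invoke \cref{conversely lemma identity} to produce $\lambda_n:=\Lambda^\delta(\tilde\delta(n))$, so that $\tilde n\circ\tilde\delta=id_{\mathbb N}$, $\tilde\delta\circ\tilde n=id_{\Im(\tilde\delta)}$ and $\Lambda^\delta=\Lambda^n\circ\tilde n$. I then evaluate the key inequality at the extremal noise level $\delta=\tilde\delta(n)=\Delta(n,\lambda_n)$, where the admissibility constraint $\delta\le\Delta(n,\lambda_n)$ is saturated. This saturating choice is exactly the one for which the matched parameter $\lambda_\delta$ coincides with $\lambda_n$, which is what makes the same $\lambda$ appear on both sides of the key inequality and simultaneously lets me apply \cref{error rate conversely}, whose bound lives along the curve $\lambda=\lambda_\delta$.

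Combining the two ingredients, for all $n$ large enough that $\tilde\delta(n)$ lies below the threshold of \cref{error rate conversely} one obtains
\begin{equation*}
\mathbb{E}(\|\hat f^{\lambda_n}_n-f^\dagger\|^2)\ge\|f^{\lambda_{\tilde\delta(n)}}_{\tilde\delta(n)}-f^\dagger\|^2\ge c\,(\tilde\delta(n))^\alpha .
\end{equation*}
It remains to express $(\tilde\delta(n))^\alpha$ as a power of $n$. I would read off the rate of $\tilde\delta$ by inverting the second statement of \cref{lemma rates}: from $\lambda_\delta=\Theta(\delta^{p^*})$ and $\varepsilon(\lambda)=\Theta(\lambda^\gamma)$ one has $\tilde n(\delta)=\Theta(\delta^{-\min(2,p^*\gamma+1)})$, and since $n=\tilde n(\delta)$ along the correspondence, inversion gives $\tilde\delta(n)=\Theta(n^{-1/\min(2,p^*\gamma+1)})$. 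Splitting according to whether $p^*\gamma\ge 1$ (exponent $1/2$) or $p^*\gamma<1$ (exponent $1/(1+p^*\gamma)$) and raising to the power $\alpha$ yields precisely the two regimes of \cref{stochastic lower rate}; substituting the same rate into $\lambda_n=\Theta((\tilde\delta(n))^{p^*})$ yields \cref{order lambda_n}.

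I expect the main work to be bookkeeping rather than analysis. The central point is the parameter matching just described: one must check that \cref{conversely lemma identity} genuinely makes $\lambda_\delta$ and $\lambda_n$ agree at $\delta=\tilde\delta(n)$, so that the key inequality and the hypothesis refer to the same regularized solution, and the dichotomy $p^*\gamma\ge 1$ versus $p^*\gamma<1$ is then nothing but the two branches of the $\min$ in \cref{lemma rates}. A point worth stating explicitly is that the construction applies to every $n$, not merely to a subsequence: the identity $\tilde n\circ\tilde\delta=id_{\mathbb N}$ shows that each $n$ is realized as $\tilde n(\tilde\delta(n))$, so the lower bound holds for all sufficiently large $n$ and the stated $\Omega$-rate is legitimate. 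No single step is genuinely hard; the care lies in threading the two changes of variable consistently and in keeping the direction of the inequalities aligned so that a lower bound in $\delta$ becomes a lower bound in $n$.
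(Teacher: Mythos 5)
Your proposal is correct and follows essentially the same route as the paper's proof: both invoke \cref{conversely lemma identity} to match $\lambda_n$ with $\lambda_{\tilde\delta(n)}$, evaluate the key inequality of \cref{prop error} at the saturating noise level $\delta=\tilde\delta(n)$ to obtain $\mathbb{E}(\|\hat f^{\lambda_n}_n-f^\dagger\|^2)\ge \|f^{\Lambda^{\delta}(\tilde\delta(n))}_{\tilde\delta(n)}-f^\dagger\|^2\ge M'(\tilde\delta(n))^{\alpha}$, and then convert the rate of $\tilde\delta(n)$ into a power of $n$ via \cref{lemma rates}. You simply spell out the parameter-matching and the two-branch rate computation in more detail than the paper's terse ``reverse the roles of $n$ and $\delta$'' argument.
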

\begin{proof}
The proof exploits a similar argument to the one used for \cref{theo_conv_rates}. Given $\lambda_{\delta}=\Lambda^{\delta}(\delta)$, by defining $\lambda_n =\Lambda^{n}(n)$ according to \cref{conversely lemma identity}, it can be proved that the rate of $\lambda_n$ is given by equation \cref{order lambda_n}. 
% By definition of $\lambda_n$ and using hypothesis $\lambda_{\delta}=\Theta(\delta^{p^*})$ it can be trivially verified that $\lambda_n$ satisfies \cref{order lambda_n}. 
To prove \cref{stochastic lower rate} one has to reverse the role of $n$ and $\delta$ in the proof of \cref{theo_conv_rates} and use \cref{prop error} and hypothesis \cref{error rate conversely}. In such a way one obtains that for each $n\in\mathbb{N}$, there exist $\delta_0>0$ and $M'>0$ such that
\begin{equation}
\label{lower error}
\mathbb{E}(\| \hat{f}_n^{\lambda_n}-f^{\dagger}\|^2)\ge
\| f^{\Lambda^{\delta}(\tilde{\delta}(n))}_{\tilde{\delta}(n)}-f^{\dagger}\|^2\ge M' (\tilde{\delta}(n))^{\alpha} ~ ,
\end{equation}
for all $\tilde{\delta}(n)<\delta_0$.
The thesis follows from \cref{order lambda_n,lower error,lemma rates}.
\end{proof}

\subsection{Convergence rates in the statistical setting}\label{subsection rate of our estimator}
In the statistical learning framework a lot of research is devoted to investigate convergence rates \cite{blanchard2017optimal,bauer2007regularization,lin2018optimal}. In this section we show that, under the same hypotheses considered in \cite{blanchard2017optimal}, the upper rate of the estimator $\hat{f}^{\lambda}_n$ defined in \cref{hat f} is of the same order of the one of the classical spectral estimator \cite{blanchard2017optimal}. 
% Now we use the error bounds achieved in \cite{blanchard2017optimal}, given with respect to the number of samples $n$ in the statistical framework, to give an error bound computed with respect to the noisy-level $\delta$ in the classical infinite dimensional inverse problems framework. 
% Although in \cite{blanchard2017optimal} weak and strong minimax rates optimality are established, the \cref{theo_conv_rates} allows us to translate the upper rate of convergence only. In addition to the hypotheses of \cref{theo_conv_rates}
% such a translation requires that the statistical source conditions assumed in \cite{blanchard2017optimal} can be read as conditions on the infinite dimensional approximation problem.
% %, upstream of the discretization scheme.
First we describe the usual assumptions considered in the study of convergence rates. % \cite{blanchard2017optimal}, which are usual  to study convergence rates.  
Source conditions are expressed in terms of restrictions of the probability space, and they correspond to assume a certain degree of smoothness of the infinite dimensional solutions and operators.
The first restriction applies to the smoothness of the sought solutions.
We assume that the solution belongs to the set
\begin{equation}
\label{eq:source-conditions}
\omega(r,R):=\{f\in\mathcal{H}_1 ~ : ~ f = B^{r} w, ~ \|w\|_{\mathcal{H}_1}\le R \}, 
\end{equation}
where $B:=A^*A,$ with $A^*$ the adjoint operator and $r>0$ and $R>0$.
In the statistical framework this assumption is given as a requirement on the probability $\rho(\cdot|\cdot)$.
In particular, $\rho(\cdot|\cdot)$ has to be such that \cref{rho_expectation} holds and the sought solution belongs to the set $\omega(r,R)$. 
The second restriction applies to the eigenvalue decay of the operator $B$. % := A^*A
We assume that
\begin{equation}
\label{eq:assumption1}
%\frac{\alpha}{j^b} \leq 
\mu_j \leq \frac{d}{j^b}
\end{equation}
where $\mu_j$ are the eigenvalues of $B$ for each $j \in \mathbb N$, $j\ge 1$, $d > 0$ and $b>1$. 
In the statistical framework this assumption is given as a requirement on the probability $\nu$ which $B$ depends on. % (see ...). 
These two assumptions are restrictions on $\rho(\cdot|\cdot)$ and $\nu$ respectively and they are summarized as a single restriction on the probability space by requiring that $\rho$ given by equation \cref{rho} belongs to a suitable subspace $\mathcal M(r,R,b)$ of the probability space where $\mathcal M(r,R,b)$ represents the class of models (for details see \cite{blanchard2017optimal}). 
%In \cite{blanchard2017optimal} the upper rate of convergence is a function of $r$ and $b$ and it is obtained under the above hypotheses and the assumption that a Bernstein inequality condition has to be verified by $\rho$. 
%However, \cref{theo_conv_rates} applies for any probability $\rho$ satisfying \cref{rho_expectation} and \cref{rho_variance} and any further assumptions on $\rho$ can be considered.
%In other words, for establishing the  order optimality of a method in the case of infinite dimensional problems we need one distribution $\rho$ which satisfies conditions in equations \cref{rho_expectation} and \cref{rho_variance}. 

We give an upper rate for $\mathbb{E}(\Vert \hat{f}^{\lambda}_n-f^{\dagger}\Vert^2)$ by exploiting the upper bound given in \cite{bissantz2007convergence},  where a more general mixed type noise model is considered and the stochastic part of the noise is modeled as an Hilbert-space process.
%in a statistical inverse problems setting. 
We recall the properties of the regularization function $s_{\lambda}$. 
\begin{definition}
The regularization (or filtering) functions $s_{\lambda}$ for $\lambda>0$ defined on the spectrum of $A^*A$, denoted by $\sigma(A^*A)$, have to satisfy the following properties 
\begin{itemize}
\item there exists a constant $D>0$ such that
\begin{equation}\label{property1 s_lambda}
\sup_{t\in\sigma(A^*A)} |t s_{\lambda}(t)|\le D ~ ~ \text{ uniformly in } ~ \lambda>0
\end{equation}
\item there exists a constant $E>0$ such that\
\begin{equation}\label{property2 s_lambda}
\sup_{\lambda>0} \sup_{t\in\sigma(A^*A)}|\lambda s_{\lambda}(t)|\le E
\end{equation}
\item there exists $q>0$ called qualification of the method and constants $C_{\nu}>0$ such that
\begin{equation}\label{qualification reg function}
\sup_{t\in\sigma(A^*A)} |t^{\nu}(1-t s_{\lambda}(t))| \le C_{\nu} \lambda^{\nu} ~ ~ \forall ~ \lambda>0 ~ \text{ and } ~ ~ 0\le\nu\le q.
\end{equation}
\end{itemize}
\end{definition}

%We now give an optimal upper rate for the estimator $\hat{f}^{\lambda}_n$, defined in \cref{hat f}.
We remark that $\mathbb{E}(\Vert \hat{f}^{\lambda}_n-f^{\dagger}\Vert^2)$ satisfies the bias-variance decomposition as follows
\begin{equation}\label{bias-variance decomposition}
\mathbb{E}(\| \hat{f}^{\lambda}_n-f^{\dagger}\|^2)= B(\hat{f}_{\lambda})^2 + \mathbb{E}(\| \hat{f}_{\lambda}-\mathbb{E}(\hat{f}_{\lambda})\|^2),
\end{equation}
where $B(\hat{f}_{\lambda}):=\| \mathbb{E}(\hat{f}^{\lambda}_n)-f^{\dagger}\|$ is the bias term and $\mathbb{E}(\hat{f}^{\lambda}_n)=f^{\lambda}$.
Under the source condition \cref{eq:source-conditions} the bias term can be bounded by
\begin{equation}
B(\hat{f}_{\lambda}) \le C_{r} \lambda^r R,  
\end{equation}
where $C_{r}$ is the constant of the property \cref{qualification reg function} of the regularization function $s_{\lambda}$. Hereafter, we consider $r\le q$.
The estimation of the variance term needs more manipulations. In the following result we show the optimal upper rate achieved by $\hat{f}^{\lambda}_n$.
%\cref{bias-variance decomposition}.  
\begin{lemma}
Let $\hat{f}^{\lambda}_{n}$ be defined in \cref{hat f} and let the model be described by \cref{rho_expectation,rho_variance}. Under the source conditions \cref{eq:source-conditions,eq:assumption1} we have
\begin{equation}\label{upper rate estimator}
\mathbb{E}(\| \hat{f}_n^{\lambda}-f^{\dagger}\|^2)\in O\left(\left(\frac{1}{n}\right)^{\frac{2r}{2r+1+\frac{1}{b}}}\right),
\end{equation}
with $\lambda \in \Theta\left((\frac{1}{n})^{\frac{1}{2r+1+\frac{1}{b}}}\right)$. 
\end{lemma}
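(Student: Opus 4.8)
The plan is to bound the two summands of the exact bias--variance decomposition \cref{bias-variance decomposition} separately. The bias is already controlled by the source condition: writing $f^{\dagger}=B^{r}w$ with $\|w\|\le R$ and applying the qualification property \cref{qualification reg function} with $\nu=r\le q$, one gets $B(\hat f_\lambda)=\|(I-s_\lambda(B)B)f^{\dagger}\|=\|B^{r}(I-s_\lambda(B)B)w\|\le C_{r}\lambda^{r}R$, so that $B(\hat f_\lambda)^2\in O(\lambda^{2r})$. The whole difficulty therefore concentrates in the variance term $\mathbb E(\|\hat f^{\lambda}_n-\mathbb E(\hat f^{\lambda}_n)\|^2)$, for which I intend to match the estimator to the framework of \cite{bissantz2007convergence}.

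I would next use the fact that, by \cref{hat f}, the estimator $\hat f^{\lambda}_n=\frac{1}{n}\sum_{i=1}^{n}s_\lambda(B)\phi_{X_i}Y_i$ is an empirical mean of i.i.d.\ $\mathcal H_1$-valued random vectors with common mean $f^{\lambda}=\mathbb E(\hat f^{\lambda}_n)$. Consequently the variance equals $\frac{1}{n}\bigl(\mathbb E\|s_\lambda(B)\phi_X Y\|^2-\|f^{\lambda}\|^2\bigr)$. Inserting $\mathbb E(Y^2|X=x)=\sigma^2+y(x)^2$ from \cref{rho_expectation,rho_variance}, the dominant contribution is $\frac{\sigma^2}{n}\,\mathrm{tr}\bigl(s_\lambda(B)^2B\bigr)=\frac{\sigma^2}{n}\sum_{j}\mu_j s_\lambda(\mu_j)^2$, while the remaining terms (those carrying the factor $y(X)^2$, together with $-\|f^{\lambda}\|^2$) are uniformly bounded and hence of order $O(1/n)$. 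This is exactly the variance produced by the Hilbert-space noise model of \cite{bissantz2007convergence} with a stochastic perturbation of strength $\sigma/\sqrt{n}$, so that their upper bound for general spectral methods transfers to $\hat f^{\lambda}_n$.

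The core estimate is then the spectral sum $\sum_{j}\mu_j s_\lambda(\mu_j)^2$. I would bound a single term by combining the filter properties \cref{property1 s_lambda,property2 s_lambda}, which give $\mu_j s_\lambda(\mu_j)\le D$ and $s_\lambda(\mu_j)\le E/\lambda$, so that $\mu_j s_\lambda(\mu_j)^2\le\frac{E}{\lambda}\min(D,E\mu_j/\lambda)$. Splitting the sum at $\mu_j=\lambda$ and invoking the eigenvalue decay \cref{eq:assumption1}, the number of indices with $\mu_j\ge\lambda$ is $O(\lambda^{-1/b})$ and the tail obeys $\sum_{\mu_j<\lambda}\mu_j\in O(\lambda^{\,1-1/b})$; both pieces yield $\sum_{j}\mu_j s_\lambda(\mu_j)^2\in O(\lambda^{-(1+1/b)})$, whence the variance is of order $\frac{1}{n}\lambda^{-(1+1/b)}$.

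Combining the two bounds gives $\mathbb E(\|\hat f^{\lambda}_n-f^{\dagger}\|^2)\le C_1\lambda^{2r}+C_2\,\frac{1}{n}\lambda^{-(1+1/b)}$, and balancing the two summands forces $\lambda\in\Theta\bigl((1/n)^{1/(2r+1+1/b)}\bigr)$, which produces the announced rate \cref{upper rate estimator}. The main obstacle I anticipate is precisely the variance estimate: since only an \emph{upper} bound on the $\mu_j$ is available, the sum $\sum_{j}\mu_j s_\lambda(\mu_j)^2$ cannot be controlled through $\sum_{j}1/\mu_j$ and genuinely requires the split at $\mu_j=\lambda$ above; moreover one must verify carefully that the secondary variance terms really are of lower order, so that the bound of \cite{bissantz2007convergence} may be invoked with the leading $\sigma^2$-term alone.
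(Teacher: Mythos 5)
Your argument has the same skeleton as the paper's: the exact decomposition \cref{bias-variance decomposition}, the bias bound $C_r\lambda^r R$ from the source condition, a variance bound of order $\tfrac{1}{n}\lambda^{-(1+1/b)}$, and the final balancing giving $\lambda\in\Theta(n^{-1/(2r+1+1/b)})$. Where you diverge is the variance step: the paper recasts $A^*_{\mathbf{x}}\mathbf{y}-A^*Af^{\dagger}$ as a Hilbert-space noise process of strength $\sqrt{C}/\sqrt{n}$ and delegates the bound to Theorem 3 of \cite{bissantz2007convergence}, whose statement already packages the quantity $\lambda^{-2}\int_0^{\lambda}\beta^{-1/b}\,d\beta$; you instead compute the variance of the i.i.d.\ mean directly and estimate the spectral sum $\sum_j \mu_j s_\lambda(\mu_j)^2$ by splitting at $\mu_j=\lambda$, which is exactly the discrete counterpart of that integral. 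Your route is self-contained and makes visible where the filter properties \cref{property1 s_lambda,property2 s_lambda} and the decay \cref{eq:assumption1} each enter, at the cost of redoing what the citation provides. One inaccuracy to fix: the secondary variance terms carrying $y(X)^2$ are \emph{not} uniformly bounded in $\lambda$. One only gets $\mathbb{E}\bigl(\|s_\lambda(B)\phi_X\|^2\, y(X)^2\bigr)\le \|y\|_{\infty}^2\,\mathrm{tr}\bigl(s_\lambda(B)^2B\bigr)$ (using compactness of $\mathcal{Y}$), which is of the same order $\lambda^{-(1+1/b)}$ as the leading $\sigma^2$ contribution rather than $O(1)$. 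This only changes the constant in front of the variance, so the rate \cref{upper rate estimator} and the choice of $\lambda$ are unaffected, but the sentence claiming those terms are uniformly bounded should be corrected.
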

\begin{proof}
%We exploit the result given in Theorem 3 in \cite{bissantz2007convergence} to bound the variance term in the r.h.s of \cref{bias-variance decomposition}.
%In order to apply the noise model considered in \cite{bissantz2007convergence} 
We follow the argument given in the section 4.3 in \cite{bissantz2007convergence}. We define $\tilde{\epsilon}$ as an Hilbert-space noise process such that $A^* \tilde{\epsilon}=\textbf{A}_{\textbf{x}}\textbf{y}-A^*Af^{\dagger}$.
The noise $\epsilon=\tilde{\sigma}\tilde{\epsilon}$, where $\tilde{\sigma} = \frac{\sqrt{C}}{\sqrt{n}}$ with $C$ a constant depending on the variance $\sigma^2$, satisfies the assumption of the Theorem 3 in \cite{bissantz2007convergence}. Then, we have
%which allows to use the result in Theorem 3 in  \cite{bissantz2007convergence}, 
the following bound
\begin{equation}\label{bissantz ineq}
\mathbb{E}(\| \hat{f}^{\lambda}_n-\mathbb{E}(\hat{f}^{\lambda}_n) \|^2)=\mathbb{E}(\| s_{\lambda}(A^*A) A^* \tilde{\sigma}\epsilon \|^2) \le \frac{C}{n} L \frac{1}{\lambda^{2}} \int_{0}^{\lambda} \beta^{-\frac{1}{b}} d\beta = \frac{C}{n} L \frac{1}{\lambda^{1+\frac{1}{b}}},
\end{equation}
under assumption \cref{eq:assumption1} and where $L$ is a constant which depends on $D$ and $E$ (see properties \cref{property1 s_lambda,property2 s_lambda}) and constants in the assumption \cref{eq:assumption1}. 
Therefore, under assumption \cref{eq:source-conditions} we obtain
\begin{equation}\label{upper bound estimator}
\mathbb{E}(\| \hat{f}_n^{\lambda}-f^{\dagger}\|^2)\le C^2_r \lambda^{2r} R^2 + \frac{C}{n} L \frac{1}{\lambda^{1+\frac{1}{b}}}.
\end{equation}
By balancing terms in the r.h.s. of \cref{upper bound estimator} we have the thesis.
% \begin{equation}
% \mathbb{E}(\| \hat{f}_n^{\lambda}-f^{\dagger}\|^2)=O\left(\left(\frac{1}{n}\right)^{\frac{2r}{2r+1+\frac{1}{b}}}\right),
% \end{equation}
% with $\lambda =\Theta\left((\frac{1}{n})^{\frac{1}{2r+1+\frac{1}{b}}}\right)$.
\end{proof}
Then, the upper rate given in \cref{upper rate estimator} is the same of the classical spectral estimator $\hat{f}^{\lambda}_{n,\rm learn}$ defined in \cref{stat learning estimator}. 
%used in the statistical learning setting \cite{blanchard2017optimal}. 

\subsection{Conversion of convergence rates}\label{Application of conversion rates in the spectral theory}

Under assumptions \cref{eq:source-conditions,eq:assumption1} we use \cref{theo_conv_rates} to transform the upper rate \cref{upper rate estimator} to an upper rate for the classical spectral regularization depending on $\delta$. Let  $f^{\lambda}_{\delta}:=s_{\lambda}(A^*A)A^*y^{\delta}$ and $\gamma$ be defined as in \cref{theo_conv_rates} then we have two cases:
\begin{itemize}
\item if $\frac{\gamma}{2r+1+\frac{1}{b}}\ge\frac{1}{2}$
\begin{equation}\label{best case conversion n to delta}
\Vert f^{\lambda}_{\delta}-f^{\dagger}\Vert \in O\left(\delta^{\frac{2r}{2r+1+\frac{1}{b}}}\right)
\end{equation}
where $\lambda \in \Theta\left(\delta^{\frac{2}{2r+1+\frac{1}{b}}}\right)$,
\item if $\frac{\gamma}{2r+1+\frac{1}{b}}<\frac{1}{2}$
\begin{equation}
\Vert f^{\lambda}_{\delta}-f^{\dagger}\Vert \in O\left(\delta^{\frac{r}{2r+1+\frac{1}{b}-\gamma}}\right)
\end{equation}
where $\lambda\in \Theta\left(\delta^{\frac{1}{2r+1+\frac{1}{b}-\gamma}}\right)$.
\end{itemize}
The first case gives a faster rate with respect to the second one.
We remark that in both cases the obtained upper rate is slower than the classical optimal one, i.e. $O\left(\delta^{\frac{2r}{2r+1}}\right)$ \cite{engl_regularization_1996}. 
%($r\le q-\frac{1}{2}$, where $q$ is the qualification). 
The ratio between the first best case in \cref{best case conversion n to delta} and the classical optimal one $O\left(\delta^{\frac{2r}{2r+1}}\right)$ is equal to $\tau=\frac{2r+1+\frac{1}{b}}{2r+1}>1$. $\tau$ represents the loss factor converting the optimal rate in the statistical setting to a rate depending on the noise level $\delta$. We have 
$\tau < 2$
%1+\frac{1}{2r+1} < 2$, 
and $\tau$ is close to 1 when $b$ is large, which means that the eigenvalues decay \cref{eq:assumption1} has to be fast. 

\begin{example}
We consider the Tikhonov solution. In this case we have $\gamma=r+\frac{1}{2}$. 
\begin{itemize}
\item Upper rate conversion. Using the upper rate \cref{upper rate estimator} and \cref{theo_conv_rates} we have
\begin{equation}
\Vert f^{\lambda}_{\delta,\rm Tik}-f^{\dagger}\Vert \in O\left(\delta^{\frac{r}{r+\frac{1}{2}+\frac{1}{b}}}\right),
\end{equation}
where $\lambda\in\Theta\left(\delta^{\frac{1}{r+\frac{1}{2}+\frac{1}{b}}}\right)$. By comparing with the classical optimal rate $O\left(\delta^{\frac{2r}{2r+1}}\right)$, 
%(holding for $r\le 1=q$), 
we have that the loss factor is $\tau_{\rm Tik}=\frac{2r+1+\frac{2}{b}}{2r+1}<3$. %and therefore $\tau_{\rm Tik}<3$
%1+\frac{2}{2r+1}$ 
%and it is closer to 1 if $b$ is large. 
%the `quantity' of rate which is lost through the conversion is of the order equal to $O\left(\delta^{\frac{2r}{2r^2b+2rb+2r+\frac{b}{2}+1}}\right)$.
\item Lower rate conversion. The classical lower rate established for Tikhonov solution with respect to the noise level $\delta$, i.e. $\Vert f^{\lambda}_{\delta,\rm Tik}-f^{\dagger}\Vert \in \Omega\left(\delta^{\frac{2r}{2r+1}}\right)$, under the source condition \cref{eq:source-conditions} can be converted to a lower rate with respect to the number of samples $n$ using \cref{conversely theorem rates}, that is
\begin{equation}\label{conv lower rate tik}
\mathbb{E}(\Vert \hat{f}_{n,\rm Tik}^{\lambda}-f^{\dagger}\Vert^2)\in\Omega\left(\left(\frac{1}{n}\right)^{\frac{2r}{2r+1}}\right),
\end{equation}
where $\lambda\in\Theta\left(\left(\frac{1}{n}\right)^{\frac{1}{2r+1}}\right)$.
In \cite{blanchard2017optimal} it is proven that $\left(\frac{1}{n}\right)^{\frac{2r}{2r+1+\frac{1}{b}}}$ is a minimax rate for the estimator $\hat{f}^{\lambda}_{n,\rm learn}$. %(such a result includes also the Tikhonov case).
%(holding for $r\le q$, where $q$ is the qualification and such a result includes also the Tikhonov case). 
The lower bound is given under the source condition \cref{eq:source-conditions} and the following hypothesis on the eigenvalues decay of the operator $B$
\begin{equation}\label{eigen lower}
\mu_j\ge\frac{a}{j^b}
\end{equation}
where $a>0$, $b>1$ and $j\ge 1$.
Comparing the result in \cref{conv lower rate tik} and the lower rate established in \cite{blanchard2017optimal} we note that the lower rate in \cref{conv lower rate tik} is retrieved as $b\to\infty$. In such a case the r.h.s of the eigenvalue condition \cref{eigen lower}
vanishes for all $j \geq 2$ when $b \to \infty$. Thus, it takes the form $\mu_1 \geq a>0$ which can be read as a condition $B \neq 0$. 
\end{itemize} 
\end{example}

\ifdefined\OK
We now show that Theorem 3.5 in \cite{blanchard2017optimal} is related to the classical results on the optimality order in\cite{engl_regularization_1996}.
Let us consider, in place of condition \cref{eq:assumption1}, the condition 
\begin{equation}
\frac{\alpha}{j^b} \leq \mu_j
\end{equation}
for some small $\alpha$.
Under this assumption, Theorem 3.5 in \cite{blanchard2017optimal} proved the weak minmax lower rate of convergence, i.e.
\begin{equation}
\inf_{R>0} ~ \limsup_{n \to \infty} ~ \inf_{\lambda} \sup_{\rho \in \mathcal M(r,b,R)} \frac{\sqrt{ \mathbb  E_{\rho^{\otimes n}} \| f^\lambda_n - f_\rho \|^2} }{ a_n} > 0 ~ .
\end{equation}
By construction, taking the supremum over the probabilities $\rho \in \mathcal M(r,b,R)$ corresponds to taking the supremum over the solutions $f_\rho \in \Omega(\mu,r,R)$.
The infimum over $\lambda$ corresponds to select the method of the family $L^\lambda$ which provides the best error in the worst-expected case.
Taking the superior limit ensures that the error cannot be of order lower than $a_n$. 
The infimum over the radii $R$ corresponds to consider the set
\begin{equation}
\Omega(r) = \bigcup_{R>0} \Omega(r,R) 
\end{equation}

\fi

\section{Conclusion}

In this paper we attempted to give a uniform vision of discrete inverse problems and supervised learning.
We started from the infinite dimensional approximation problem in a RKHS, showing that there is a natural correspondence between its solution and the solutions of a certain class of inverse problems.
Such a correspondence suggests that these problems are equivalent to some extent: we showed that as well as the {\it data space} of a linear inverse problem is a RKHS, 
%of a kernel learning problem as well as 
the feature space of a learning problem can be thought of as the {\it parameter space} of a linear inverse problem.
Then, we distinguished learning and discrete inverse problems according to a different discretization scheme of the same infinite dimensional problem.
We analyzed the convergence of the discretized functionals and solutions to their corresponding ideal ones, and in the case of a deterministic discretization we gave some mild sufficient conditions to have the convergence relying on the $\Gamma$-convergence theory.
%In this regard, we generalized the regularization by projection to the non-least-squares case by means of an application of the representer theorem.
Finally, we investigated the connection between error convergence rates in the case the error is computed as a function of the noise level $\delta$ and as a function of the number of examples $n$.
%of infinite dimensional inverse problems and finite dimensional learning problems. 
We quantified the deviation between optimal rates in the two frameworks.
%which is useful to convert lower and upper bounds of the error convergence rates from one framework to the other.

% % We can summarize some of the results of this work across the following commutative diagram, which is meaningful.
% % che risultano proiezioni della soluzione infinito dimensionale.
% Indeed, in order to prove this, we should define a suitable projection operator and verify that it makes the two diagram commuting.
% This topic will be object of future work.
% Moreover, given the correspondence between solutions of the two problems and convergence results in the two frameworks, a systematic study of the connection between the order optimality has to be object of a further development.
% %Such diagram summarizes some of the results presented in this work.
\bibliographystyle{siamplain}
\bibliography{sample.bib}

\end{document}